\documentclass[12pt,leqno]{amsart} 

\usepackage{amsmath}
\usepackage{amssymb}
\usepackage{amsthm} 
\usepackage[pagebackref]{hyperref} 
\hypersetup{
	colorlinks=true, % make hyperlinks different color
	linkcolor=blue,  % color of \ref
	citecolor=blue, % color of \cite
}
\usepackage{esint} 
\usepackage{enumerate} 
\usepackage[shortlabels]{enumitem}
\usepackage{mathtools}
\usepackage{comment} 
\usepackage[english]{babel}
\usepackage[dvipsnames]{xcolor}

\usepackage[a4paper, margin={3cm}]{geometry}
\usepackage{ulem}

\numberwithin{equation}{section}

\newtheorem{lemma}{Lemma}[section]

\newtheorem{prop}[lemma]{Proposition}
\newtheorem{thm}[lemma]{Theorem}
\newtheorem{cor}[lemma]{Corollary}

\newtheorem{thmx}{Theorem}
 
\theoremstyle{definition}
\newtheorem{rmk}[lemma]{\bf Remark}

\newtheorem{defn}[lemma]{\bf Definition}

\newcommand{\N}{\mathbb{N}}
\newcommand{\Z}{\mathbb{Z}}

\newcommand{\R}{\mathbb{R}}
\newcommand{\C}{\mathbb{C}}

\newcommand{\ra}{\rightarrow}

\newcommand{\norm}[1]{\left \lVert #1 \right \rVert}
\newcommand{\abs}[1]{\left \lvert #1 \right \rvert}
\newcommand{\loc}{\mathrm{loc}}
\DeclareMathOperator{\divv}{\mathrm{div}}

\DeclareMathOperator{\Lip}{\mathrm{Lip}}
\renewcommand{\epsilon}{\varepsilon}
\newcommand{\T}{\mathbb{T}}

\newcommand{\eps}{\varepsilon}
\newcommand{\vp}{\varphi}

\newcommand{\al}{\alpha}
\newcommand{\be}{\beta}
\newcommand{\ga}{\gamma}
\newcommand{\de}{\delta}

\newcommand{\te}{\theta}
\newcommand{\la}{\lambda}
\newcommand{\La}{\Lambda}

\newcommand{\si}{\sigma}

\newcommand{\ol}{\overline}

\newcommand{\nid}{\noindent}

\newcommand{\iny}{\infty}
\newcommand{\del}{ \partial}
\newcommand{\su}{\subset}
\newcommand{\LP}{\Delta}
\newcommand{\gr}{\nabla}

\newcommand{\cyl}{\mathbb{T}^d \times \R^m}

\newcommand{\innp}[1]{\left< #1 \right>}
\newcommand{\set}[1]{\left\{#1\right\}}
\newcommand{\brac}[1]{\left[#1\right]}
\newcommand{\pr}[1]{\left( #1 \right) }

\newcommand{\disp}{\displaystyle}
\DeclareMathOperator{\di}{div}

\title[Decay at infinity for solutions to Schr\"{o}dinger equations]{Optimal rates of decay at infinity for solutions to Schr\"{o}dinger equations}
\author{Blair Davey}
\address[B. Davey]{Department of Mathematical Sciences, Montana State University, Bozeman, MT 59717}
\email{{blairdavey@montana.edu}}

\author{Cole Jeznach}
\address[C. Jeznach]{Departament de Matem\`{a}tiques, Universitat Aut\`{o}noma de Barcelona, Bellaterra, 08193, Spain}
\email{colejeffrey.jeznach@uab.cat} 

\thanks{B. Davey was partially supported by the NSF CAREER DMS-2236491.
C.J. was supported by the European Research Council (ERC) under the European Union's Horizon 2020 research and innovation programme (grant agreement 101018680), and by NSF-DMS-2503326}
\subjclass[2010]{Primary: 28A75, 35J70. Secondary: 42B20}
\keywords{keywords} 
\date{\today}
\addtocontents{toc}{\protect\setcounter{tocdepth}{1}}

\begin{document}

\begin{abstract}
We prove rates of decay at infinity for solutions to variable-coefficient Schr\"{o}dinger equations of the form $-\divv(A \nabla u) + W \cdot \nabla u + V u = \la u$ in cylinders, $\T^d \times \R^m$.
We assume that $W$ and $V$ are bounded and that $\la \in \C$. 
Our rates depend on the decay of $\abs{\nabla A}$ at infinity.
In particular, we prove a range of quantitative unique continuation-type results at infinity when $\abs{\nabla A(\te, x)} \le C (1 + \abs{x})^{-\tau}$ for $\tau \in [0,1]$. 
By adapting the methods in \cite{KLP25}, we construct explicit solutions to demonstrate the sharpness of our estimates for each such $\tau$.
\end{abstract}

\maketitle
\tableofcontents

\section{Introduction}

A modern-day version of Landis' conjecture states that if $V \in L^\infty(\R^n)$ is real-valued, and $u$ is a solution of
\begin{align}
\label{eqn:landis}
    -\Delta u + Vu = 0  \, \,  \text{ in }   \,  \R^n, 
\end{align}
then $\abs{u} \lesssim \exp \left ( -c \abs{x}^{1+} \right )$ implies that $u \equiv 0$. 
When $n = 2$, Landis' conjecture was proven by Logunov, Malinnikova, Nadirashvili, and Nazarov in \cite{LMNN25}.
However, Landis' conjecture currently remains open for $n \ge 3$. 
That $V$ is real-valued in the conjecture is crucial; in \cite{M92}, Meshkov constructed an example of a bounded potential $V : \R^2 \to \C$ and a non-trivial solution $u : \R^2 \to \C$ of the equation \eqref{eqn:landis} when $n = 2$ with the property that $\abs{u(x)} \lesssim \exp \left(- c \abs{x}^{4/3} \right)$. 
In the same paper, Meshkov used Carleman estimate techniques to show that for \textit{complex}-valued bounded potentials $V$, the power $4/3$ corresponds to the optimal rate of decay at infinity.
Specifically, if $u$ solves \eqref{eqn:landis} for $V$ bounded, and $ \abs{u} \lesssim  \exp \left( -C \abs{x}^{4/3} \right)$ for each $C > 1$, then $u \equiv 0$. 
In \cite{BK05}, Bourgain and Kenig established a quantitative version of Meshkov's estimate:
If $u$ is a bounded, normalized solution of \eqref{eqn:landis}, then for $R \gg 1$, whenever $ \abs{x_0}  = R$, it holds that 
$$\norm{u}_{L^2\pr{B_1(x_0)}} \gtrsim \exp\pr{- C R^{4/ 3} \log R}.$$
The work of Filonov and Krymskii \cite{FK24} showed that if one replaces $\R^n$ by the cylinder $\mathbb{T}^d \times \R$ for $d \ge 3$, then the analogous version of Landis' conjecture fails. 
That is, there exists a real-valued function $u : \T^d \times \R \to \R$ that satisfies
\begin{align*}
	\abs{\Delta u(\te, x)} \le C \abs{u(\te, x)}, \, \, \abs{u(\te, x)} \lesssim \exp \left(  -c \abs{x}^{4/3}   \right), \, (\te, x) \in \T^d \times \R.
\end{align*}
Here, $\T^d$ denotes the $d$-dimensional torus, $\T^d := \R^d / \pr{2 \pi \Z}^d$.

For generalized Schr\"odinger equations of the form
\begin{align}
\label{eqn:Landis+}
	-\divv(A \nabla u) + W \cdot \nabla u + V u = \la u \, \,  \text{  in } \, \R^n,
\end{align}
the optimal rate of decay at infinity for solutions is well-understood whenever $\abs{\gr A}$ decays fast enough.
If $\abs{\gr A(x)} \lesssim \abs{x}^{-1-\eps}$ for some $\eps > 0$, $W \in L^\iny(\R^n; \C^n)$, $V \in L^\iny(\R^n; \C)$, $\la \in \C$, and $u$ is a bounded, normalized solution to \eqref{eqn:Landis+}, then for $R \gg 1$, whenever $ \abs{x_0}  = R$, it holds that 
$$\norm{u}_{L^2\pr{B_1(x_0)}} \gtrsim \exp\pr{- C R^{2} \log R}.$$
If $W \equiv 0$, then the power of $2$ may be replaced with $4/3$.
And if $W, V \equiv 0$, then the power of $2$ may be replaced with $1$.
Versions of this result, examples that prove sharpness, and various generalizations are given in \cite{Dav14}, \cite{LW14}, and \cite{Dav26}, for example.
The decay condition on $\abs{\gr A}$ appeared previously in \cite{Tu10} where Nguyen explored unique continuation problems for parabolic equations.
As we discuss below, the above decay condition on $\abs{\gr A}$ is sufficient, but also nearly necessary.

In the more recent work \cite{KLP25}, Krymskii, Logunov, and Pagano explored Landis-type results for variable-coefficient second-order equations in $3$-dimensional cylinders, $\T^2 \times \R$. 
They constructed eigenfunctions of a uniformly elliptic equation with smooth coefficients that exhibit \textit{doubly} exponential decay at infinity: 
For each $\la > 0$, there exists a matrix $A(\te, \vp, x) \in C^\infty( \T^2 \times \R)$ and a non-trivial eigenfunction $u \not \equiv 0$ that solves
\begin{align*}
	-\divv(A \nabla u) & = \lambda u \, \, \text{ in } \, \T^2 \times \R,
\end{align*}
and satisfies $\abs{u(\te, \vp, x)} \le C \exp \set{- c \exp \pr{c \abs{x}}}$ for some constants $c, C >0$. 
Moreover, the coefficients $A$ are such that $\abs{\nabla A(\te, \vp, x)} \le C$ for all $(\te, \vp, x) \in \T^2 \times \R$. 
In comparison with the case of the Laplacian (or when $\abs{\nabla A(x)} \lesssim \abs{x}^{-1 - \epsilon}$), these examples are, in a suitable sense, counterexamples to quantitative unique continuation (at infinity) for elliptic operators with uniformly Lipschitz coefficients. 
This result is in contrast with local unique continuation properties, where the Lipschitz nature of $A$ is sufficient, and also very close to necessary.

Given a partial differential operator $\mathcal{L}$ on a connected domain $\Omega \subset \R^n$, recall that $\mathcal{L}$ satisfies the unique continuation property (UCP) if whenever $u$ solves $\mathcal{L} u = 0$ in $\Omega$,  then $u|_V \equiv 0$ for some non-trivial open subset $V \subset \Omega$ implies that $u \equiv 0$. 
Furthermore, $\mathcal{L}$ satisfies the strong unique continuation property (SUCP) if whenever a solution $u$ of $\mathcal{L} u = 0$ in $\Omega$ vanishes to infinite order at $x_0 \in \Omega$ in a suitable sense, then also $u \equiv 0$. 
Linear elliptic equations with Lipschitz continuous coefficients have the SUCP; see \cite{GL86, GL87}, for example.
However, when the coefficients are only assumed to be H\"older continuous, there are various counterexamples to unique continuation due to Plis \cite{Pli63}, Miller \cite{Mil74}, Mandache \cite{Man98}, and Filonov \cite{Fil01}.
The work of the second-named author in \cite{Jez25} combined with the constructions of \cite{Man98} shows that a log-Lipschitz condition on the coefficient matrix $A$ is essentially the sharp condition that guarantees the SUCP. 
Thus, in the case of local unique continuation properties, optimal conditions on the smoothness of $A$ are known for second-order linear elliptic equations. 
In this article, we pursue the same goal for unique continuation (or Landis-type) results at infinity, and so it is useful to give such results a name.

We say that an elliptic operator $\mathcal{L}$ satisfies the \textit{unique continuation property at infinity} in $\T^d \times \R^m$ if there is a (polynomial) power $p >0$ so that any solution $u$ of the equation
\begin{align*}
    \mathcal{L} u = 0 \, \,  \text{ in }   \,   \T^d \times \R^m
\end{align*}
that satisfies 
\begin{align*}
    \abs{u(\te, x)} \le \exp\{-C \abs{x}^q\} \,\, \text{ for all } \, (\te, x) \in \T^d \times \R^m,
\end{align*}
for some $q > p$, is necessarily the trivial solution, $u \equiv 0$. 
Notice that Landis-type results can be formulated with this language.
The classical Schr\"{o}dinger operator $-\Delta + V$ with bounded $V$ satisfies the unique continuation property at infinity in $\R^n$ with polynomial power $p = 4/3$, \cite{M92}.
However, the eigenvalue operator $-\divv(A \nabla \, \cdot \,) - \lambda $ constructed by \cite{KLP25} does not satisfy the unique continuation property at infinity in $\T^2 \times \R$ for any polynomial power despite the fact that $\abs{\nabla A }\lesssim 1$. 
On the other hand, the results in \cite{LW14} and \cite{Dav26} establish unique continuation results at infinity for generalized Schr\"odinger operators with the assumption that $\abs{\gr A(x)} \lesssim \abs{x}^{-1-\eps}$ for some $\eps > 0$. 
In this article, we examine the role of the smoothness of the coefficients $A$.
Specifically, we fill in the gap between the conditions $\abs{\nabla A(x)} \lesssim 1$ and $\abs{\nabla A(x)} \lesssim \abs{x}^{-1-\epsilon}$ and show the threshold for unique continuation at infinity. 

Our main contribution is to show that the ``right'' threshold condition is that $\abs{\nabla A(x)} \lesssim \abs{x}^{-1}$ at infinity. 
This sort of gradient-decay has appeared previously in backwards-in-time uniqueness problems for parabolic equations. 
More specifically, suppose that $u(x,t)$ is a solution with bounded growth to
\begin{align*}
    \partial_t u - \divv(A \nabla u) + W \cdot \nabla u + V u = 0  \,\, \text{ in } \R^n \times (0, 1],
\end{align*}
where $W$ and $V$ are bounded, and $A(x,t)$ is uniformly elliptic and bounded. 
In \cite{LO74}, Landis and Oleinik conjectured that, under suitable conditions on $A$, the super-Gaussian decay
\begin{align*}
    \abs{u(x, 1)} \le C \exp \{ - \abs{x}^{2 + \epsilon}\} \,\, \text{ for all  } x \in \R^n  
\end{align*}
for some $\epsilon > 0$, implies $u$ is the trivial solution: $u(x,t) \equiv 0$. This conjecture was proved in the affirmative for $A = I$ by Escauriaza, Kenig, Ponce, and Vega \cite{EKPV06}, and later in Nguyen \cite{Tu10} for $A(x,t)$ that satisfies 
\begin{align*}
    \abs{\nabla A(x,t)} \lesssim \frac{1}{1 + \abs{x}^{1 + \delta}}, \; \; \;\;
    \abs{\partial_t A(x,t)} \lesssim 1,  
    \; \; \;\;
    \abs{A(x,t) - A(x,s)} \lesssim \frac{\abs{t-s}^{1/2}}{1 + \abs{x}}.
\end{align*}
More recently, Wu and Zhang \cite{WZ16P, WZ16} proved the conjecture for coefficient matrices that satisfy $\abs{\nabla A(x,t)} + \abs{\partial_t A(x,t)} \lesssim 1$ along with the spatial decay
\begin{align*}
    \abs{\nabla A(x,t)} \le E\abs{x}^{-1}
\end{align*}
for some $ E > 0$ sufficiently small. 
We point out that the results of \cite{WZ16P, WZ16} imply our Theorem \ref{ThmA} part (c) when $q = 2$, while all other cases of Theorem \ref{ThmA} do not follow from the parabolic results.
Moreover, our techniques differ from those used in \cite{EKPV06}, \cite{Tu10}, \cite{WZ16P}, and \cite{WZ16}.

\vspace{0.5em}

\subsection{Theorem statements}

For $d \in \Z_{\ge 0}$, $m \in \N$, we consider eigenfunctions $u(\te, x)$ for $(\te, x) \in \T^d \times \R^m$ associated to generalized Schr\"odinger operators of the form $-\divv(A \nabla ) + W \cdot \nabla  + V $.
We assume that $A$ is a uniformly elliptic, symmetric, bounded matrix that is locally Lipschitz and has the block structure
\begin{align}
	\label{eqn:A_struct_intro}
	A(\te, x) = \begin{pmatrix}
		A^{(1)}(\te, x) & 0 \\
		0 & A^{(2)}(\te, x)
	\end{pmatrix},
\end{align}
where $A^{(1)}$ is a $d \times d$ matrix, and $A^{(2)}$ is an $m \times m$ matrix.
\footnote{While this structural assumption may seem arbitrary, the examples of \cite{KLP25} fall into this class. 
Notice that when $d = 0$, we are reduced to $\R^m$ and we have no restriction on the structure of the matrix in $\R^m$.} 
We also assume that $W$ and $V$ are bounded. 
Under various assumptions on the decay of $\abs{\nabla A(\te, x)}$ as $\abs{x} \ra \infty$, we obtain lower bounds on the possible rate of decay of eigenfunctions $u(\te, x)$ in the $x$-directions. 
As an application of such results, we obtain optimal decay rates of eigenfunctions when $\abs{\nabla A(\te, x)}$ decays polynomially in $\abs{x}$: 
\begin{align}
\label{eqn:smooth_intro}
	\abs{\nabla A(\te, x)} & \le \frac{L}{ (1 + \abs{x})^\tau    }  \, \,\,\, \text{ for a.e. } (\te, x) \in \T^d \times \R^m,
\end{align}
for $\tau \in [0,1]$. 
With this set up, our main results are described by the following statement. 
(For more precise statements regarding the meaning of normalized and bounded growth at infinity, see Theorems \ref{thm:tau_betw} and \ref{thm:tau=1}.)

\begin{thmx}[Rate of decay at infinity when coefficients have polynomial decay]
\label{ThmA}
    Let $A$ be a uniformly elliptic, symmetric, bounded matrix that has the block structure in \eqref{eqn:A_struct_intro} and satisfies \eqref{eqn:smooth_intro}.
    Let $W \in L^\iny(\T^d \times \R^m; \C^{d+m})$, $V \in L^\iny(\T^d \times \R^m; \C)$, $\la \in \C$, and define 
    $$q := \begin{cases}
           2 & W \not\equiv 0 \\  \frac 4 3 & W \equiv 0, V \not\equiv 0  \\
           1 & otherwise
    \end{cases}.$$ 
    Let $u$ be a normalized solution of 
    \begin{align}
    \label{eqn:eigensoln}
	   -\divv(A \nabla u) + W \cdot \nabla u + V u = \la u \, \text{  in } \,\, \T^d \times \R^m
    \end{align}
    with bounded growth at infinity.
    \begin{itemize}
        \item[(a)] If $\tau \in [0, 1)$, then there exists $R_0 \gg 1$ and $C > 0$ so that whenever $\abs{x_0} \ge R_0$, it holds that
        \begin{equation*}
    	\norm{u}_{L^2(\T^d \times B_1(x_0))} \ge \exp\set{- \exp\pr{C \abs{x_0}^{1-\tau}}}.
	   \end{equation*}	
       \item[(b)] If $\tau = 1$ and $L \ge L_0(q)$, then there exists $p(L) > q$, $R_0 \gg 1$ and $C > 0$ so that whenever $\abs{x_0} \ge R_0$, it holds that
        \begin{equation*}
    	\norm{u}_{L^2(\T^d \times B_1(x_0))} \ge \exp\set{- C \abs{x_0}^{p}}.
	   \end{equation*}	
       \item[(c)] If $\tau = 1$ and $L \le L_0(q)$, then there exists $R_0 \gg 1$ and $C > 0$ so that whenever $\abs{x_0} \ge R_0$, it holds that
        \begin{equation*}
    	\norm{u}_{L^2(\T^d \times B_1(x_0))} \ge \exp\set{- C \abs{x_0}^{q} \pr{\log \abs{x_0}}^C}.
	   \end{equation*}	
    \end{itemize}
\end{thmx}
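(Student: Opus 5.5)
The plan is to follow the Bourgain--Kenig scheme, adapted to the cylinder and to non-decaying $\abs{\gr A}$. The two ingredients are a Carleman estimate with a radial weight in the $x$-variables only, and a rescaling and cutoff argument that turns it into a lower bound near $x_0$. Since the weight depends only on $x$, the block structure \eqref{eqn:A_struct_intro} is what makes this work. The $\te$-block $A^{(1)}$ contributes $-\divv_\te(A^{(1)}\gr_\te u)$, and this term is self-adjoint with respect to a weight independent of $\te$, so it produces only a nonnegative term $\int e^{2\phi}\innp{A^{(1)}\gr_\te u,\gr_\te u}$. It creates no commutator with the weight, except through $\gr_x A^{(1)}$, which is controlled by \eqref{eqn:smooth_intro}.

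First I will set $R=\abs{x_0}$ and rescale $x = x_0 + R y$ in the $x$-directions only, leaving $\T^d$ unscaled. In the new variables the operator becomes $-\divv_\te(A^{(1)}\gr_\te) - R^{-2}\divv_y(A^{(2)}\gr_y)$ plus lower-order terms of size $R^{-1}\norm{W}_\iny$ and $\norm{V}_\iny+\abs{\la}$. The $y$-gradient of the rescaled $A$ is bounded by $L R\,(1+R\abs{y})^{-\tau}$, which is about $L R^{1-\tau}$ on the region $\abs{y}\gtrsim 1$.

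Next I will prove a Carleman estimate for the weight $\phi(\abs{y})$, taken as a convex increasing function of $\log\abs{y}$ in the style of Escauriaza--Vessella and Bourgain--Kenig, with a large parameter $\alpha$. The estimate will bound $\alpha^3\norm{r^{-1}e^{\alpha\phi}f}^2+\alpha\norm{e^{\alpha\phi}\gr f}^2$ by $\norm{r e^{\alpha\phi}\mathcal L_0 f}^2$, where $\mathcal L_0$ is the principal part. The commutator coming from the variable coefficients is what decides which $\alpha$ can be admitted.
\begin{itemize}
\item For $\tau=1$, the relevant quantity $r\abs{\gr A}\le L$ is scale-invariant, so the radial Carleman computation closes whenever the convexity of $\phi$ beats $L$.
\item When $L$ is small, case (c), the standard logarithmic weight works. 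The usual absorption of $W$ and $V$ then forces $\alpha\sim R^{q}$ up to logarithmic factors, with the exponent $q$ arising exactly as in Bourgain--Kenig and \cite{Dav26}.
\item When $L$ is large, case (b), I will instead use a polynomial weight $\phi=r^{\beta}$ with $\beta=\beta(L)$ large enough to dominate the error term $L\alpha\, r^{\beta-1}\abs{\gr f}^2$. This worsens the final exponent to some $p(L)>q$.
\item For $\tau<1$, case (a), the coefficient error $\abs{\gr A}\sim r^{-\tau}$ must be dominated by $\phi''$. That forces an exponential weight $\phi=e^{c r^{1-\tau}}$, which produces the doubly exponential rate.
\end{itemize}

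Finally I will apply the Carleman estimate to $\chi u$, with $\chi$ a cutoff that vanishes near $x_0$ (at unit scale) and for $\abs{x}\gtrsim 2R$, and use the normalization $\norm{u}_{L^2(\T^d\times B_1(0))}\ge 1$ on the annulus near the origin. The cutoff errors near infinity are controlled by the bounded-growth assumption combined with the size of the weight there. The errors near $x_0$ are controlled by $\norm{u}_{L^2(\T^d\times B_1(x_0))}$, after upgrading to $H^1$ via Caccioppoli's inequality. Comparing the weights at these two locations then gives $\norm{u}_{L^2(\T^d\times B_1(x_0))}\ge \exp(-C\alpha\,\phi)$ evaluated at the relevant radii, which is each of the three stated bounds.

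The main obstacle is the Carleman estimate itself. The positive commutator produced by the weight must dominate the term with $\gr_x A$, and this has to be done uniformly in $\te$. I will first verify that the mixed commutator terms vanish because of the block structure, since $\phi$ is independent of $\te$. The remaining $\gr_x A^{(1)}$ term, which involves $\abs{\gr_\te u}^2$, can then be absorbed only if the weight is chosen convex enough relative to $\tau$ and $L$. The threshold $L_0(q)$ is expected to appear at exactly this step.
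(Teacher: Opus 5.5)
\textbf{There is a genuine gap.} It is in your rescaling step, and the rest of the plan depends on it. To localize to $\T^d\times B_1(x_0)$ you centre the weight at $x_0$, as in Bourgain--Kenig. The support of $\chi u$ must then contain the normalization region $\T^d\times B_1(0)$, which lies at rescaled distance $1$ from the centre. But $\abs{\gr_y A_R(y)} = R\abs{\gr A(x_0+Ry)}\le LR\,(1+\abs{x_0+Ry})^{-\tau}$, not $LR(1+R\abs{y})^{-\tau}$. At $y=-x_0/R$ this is of size $LR$, for every $\tau\in[0,1]$.

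Equivalently, the quantity your commutator computation has to beat is $\abs{x-x_0}\,\abs{\gr A(x)}$, and near the origin this is about $L\abs{x_0}$, not at most $L$. The bound $\abs{x}\abs{\gr A(x)}\le L$ is scale-invariant only for a weight centred at the origin. A radial weight centred at the origin cannot single out $B_1(x_0)$ within the shell $\set{\abs{x}\approx\abs{x_0}}$.

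So none of your three weights closes. Absorbing the coefficient error on the normalization region forces convexity of order $L\abs{x_0}$. That gives at best $\exp\set{-\exp(CL\abs{x_0})}$, which matches the statement only when $\tau=0$. It is also what a single three-cylinder inequality on $\T^d\times B_{2R}(x_0)$ yields, since $\int_0^{2R}\norm{\gr A}_{L^\iny(\T^d\times B_s(x_0))}\,ds\approx LR$. Taking $L$ small does not rescue case (c). For a fixed threshold $\epsilon$, the hypothesis guarantees $\abs{x-x_0}\abs{\gr A(x)}\le\epsilon$ only outside a ball of radius comparable to $LR/\epsilon$ about the origin, and that ball always contains the normalization region. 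For the same reason, the threshold $L_0(q)$ cannot come from a convexity-versus-$L$ condition inside one Carleman estimate.

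\textbf{The missing idea} is an iteration in which no single inequality sees the rough region near the origin except at a fixed scale. This is the paper's Proposition~\ref{prop:decay_cyl_p_eig}.
\begin{enumerate}
\item It proves a base case at $\abs{x}=R_0$, where the global Lipschitz bound costs only a constant $e^{cLR_0}$.
\item It then propagates the lower bound along $R_k=R_{k-1}\gamma(R_{k-1})$. For $\abs{x_k}=R_k$ it applies the three-cylinder inequality of Corollary~\ref{cor:3cylinder} centred at $x_k$, with three cylinders:
\begin{itemize}
\item the small one is $\T^d\times B_1(x_k)$;
\item the middle one contains $\T^d\times B_1(\tfrac{R_{k-1}}{R_k}x_k)$, where the inductive lower bound holds;
\item the large one stays outside $\T^d\times B_{R_{k-1}/2}$ and is controlled by the growth bound.
\end{itemize}
\item Each step therefore sees only $\tilde\phi(R_k)=\int_{R_{k-1}/2}^{R_k}f(s)\,ds$. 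When $\tau=1$ this equals $L\log(2\gamma(R_{k-1}))$, and the interpolation exponent satisfies $\kappa_k^{-1}\lesssim\gamma(R_{k-1})\log R_k\,e^{\tilde c_1\tilde\phi(R_k)}\approx\gamma(R_{k-1})^{1+\tilde c_1L}\log R_k$.
\end{enumerate}
Compounding these losses over the steps produces the exponent $p=1+\tilde c_1L$. The threshold $L_0=(q-1)/\tilde c_1$ is simply where $p=q$. With $\gamma(R)=R$ and $\tilde\phi(R)\lesssim LR^{1-\tau}$, the same scheme gives the doubly exponential rate for $\tau<1$.

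This requires a three-ball inequality with two precise dependences. Its exponent must depend on the coefficients only through a factor $e^{c\int\norm{\gr A}}$. Its constant must depend on $W$ and $V$ only through $1+(KR)^2+(MR^2)^{2/3}$. The paper gets both from a weighted frequency function. A Carleman-based three-ball inequality could replace it only if it tracks these dependences just as precisely.

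A smaller point concerns $\la$. Treating it as part of the zeroth-order term makes it cost like a potential, of order $(\abs{\la}R^2)^{2/3}$. The paper instead applies the argument to $e^{\sqrt\la z}u(\te,x)$ in one extra variable, so $\la$ enters only through $\sqrt\la R$. This is what keeps the case $W\equiv V\equiv 0$ at exponent $1+\tilde c_1 L$.
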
  

\begin{rmk}
    In parts (b) and (c) above, $L_0(q) = c_0(q - 1)$.
    In particular, if $q = 1$, then (b) is always applicable and the ``small $L$" regime doesn't apply.
\end{rmk}

The above Theorem effectively says that when $\abs{\nabla A(\te, x)} \le L \abs{x}^{-1}$ for $\abs{x}$ large, then we have unique continuation at infinity in $\T^d \times \R^m$ with some polynomial power $p \ge q$. 
In fact, when the constant $L$ from \eqref{eqn:smooth_intro} is small enough, we recover the same power $p$ as in the case for the Laplacian, depending on whether a bounded complex-valued drift ($p = 2$) or bounded potential ($p = 4/3$) appears in the equation. 
On the other hand, the power $p=p(L)$ could be large when $L$ is large. 
In addition, if one only has $\abs{\nabla A(\te, x)} \lesssim \abs{x}^{-\tau}$ for some fixed $0 \le  \tau < 1$ and for all $\abs{x}$ large, then we cannot guarantee unique continuation at infinity for some polynomial power, but instead have a super-exponential lower bound. 
Our second main result shows by example that all of these behaviors are possible.

\begin{thmx}[Sharp examples]
\label{ThmB}
For each $\epsilon \in (0, 1]$, there exists a bounded, symmetric, uniformly elliptic matrix $A$ defined on $\T^2 \times \R$ that satisfies \eqref{eqn:A_struct_intro} and \eqref{eqn:smooth_intro} with $\tau = 1 - \eps$, a bounded, real-valued function $V$ with compact support, and a real-valued solution $u$ of the equation
\begin{align}
\label{eqn:counterexample_intro}
    \divv(A \nabla u) = V u  \, \text{ in } \,\, \T^2 \times \R
\end{align}
with the property that for every $(\te, \vp, x)  \in \T^2 \times \R$,
\begin{align*}
    \abs{u(\te, \vp, x)} \le c \exp \set{ - \exp (-C \abs{x}^\epsilon)}.
\end{align*}

Similarly, for each $\epsilon \in (0, \tfrac 1 2]$, there there exists a bounded, symmetric, uniformly elliptic matrix $A$ defined on $\T^2 \times \R$ that satisfies \eqref{eqn:A_struct_intro} and \eqref{eqn:smooth_intro} with $\tau = 1$ and $L = C \eps^{-1}$, a bounded, real-valued function $V$ with compact support, and a real-valued solution $u$ of the equation \eqref{eqn:counterexample_intro}
with the property that for every $(\te, \vp, x)  \in \T^2 \times \R$,
\begin{align*}
    \abs{u(\te, \vp, x)} \le c \exp \set{ - C_\eps \abs{x}^{1/\epsilon}}.
\end{align*}
\end{thmx}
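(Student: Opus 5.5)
The plan is to adapt the Krymskii--Logunov--Pagano construction from \cite{KLP25}. There, on $\T^2 \times \R$, one glues together separated-variable pieces $u_k(\te,\vp,x) = f_k(x)\cos(n_k \te)$ or $f_k(x)\cos(n_k \vp)$. The frequencies $n_k$ grow along a sequence of unit-length (or variable-length) slabs $x \in [x_k, x_{k+1}]$. In each slab the coefficient matrix has the diagonal block form
\begin{align*}
A = \mathrm{diag}\pr{a_k(x), b_k(x), 1},
\end{align*}
and on such a piece the equation reduces to the ODE $f'' = n^2 a(x) f$ (up to a zeroth-order term absorbed by $V$). Inside a transition region one uses the two angular variables alternately. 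First the coefficient in front of the $\te$-frequency is made large, so that a $\cos(n_k\te)$-mode decays fast. In a mixing zone this mode is traded for a $\cos(n_{k+1}\vp)$-mode by an explicit interpolation $u = \chi(x)\cos(n_k\te) g + (1-\chi(x))\cos(n_{k+1}\vp) h$. The cross terms are then cancelled by choosing $a,b$ depending on $x$ and by a compactly supported $V$ near the origin only. Note that the statement as written has $\exp(-C\abs{x}^\eps)$ inside the exponent; I would prove the intended bound $\exp\{-\exp(C\abs{x}^\eps)\}$, which is the doubly-exponential form consistent with Theorem \ref{ThmA}(a). The first step is to isolate from \cite{KLP25} a single ``frequency-switching block'': on an $x$-interval of length $\ell$, the frequency passes from $n$ to $\mu n$ for a fixed $\mu>1$, with $\abs{\gr A}\lesssim 1/\ell$ and the amplitude decaying by a factor $\exp(-c\, n\ell)$.

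The second step is to rescale these blocks. With block lengths $\ell_k$ and frequencies $n_k = \mu^k$, the coefficients satisfy $\abs{\gr A} \lesssim \ell_k^{-1}$ on block $k$, located near $X_k = \sum_{j<k}\ell_j$. The total decay up to $X_k$ is $\exp(-c\sum_{j<k} \mu^j \ell_j)\approx \exp(-c\,\mu^k\ell_k)$.

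The choice of block lengths is the key bookkeeping, and I would make it separately for the two regimes.
\begin{itemize}
\item[(i)] For $\tau = 1-\eps$, set $\ell_k \sim X_k^{1-\eps}$, so that $X_k \sim k^{1/\eps}$ and \eqref{eqn:smooth_intro} holds. Then $\mu^k = \exp(k\log\mu) = \exp(C X_k^{\eps})$, giving $\abs{u}\le \exp\{-\exp(C\abs{x}^\eps)\}$.
\item[(ii)] For $\tau=1$, set $\ell_k \sim X_k$, so that $X_k$ grows geometrically, say $X_k = \rho^k$ with $\rho = 1+\de$. Then $\abs{\gr A}\lesssim 1/(\de X_k)$, so $L\sim \de^{-1}$. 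In this case $n_k = \mu^k = X_k^{\log\mu/\log\rho}$. To get the rate $\abs{x}^{1/\eps}$ we need $n_k X_k \sim X_k^{1/\eps}$, i.e.\ $\log \mu/\log\rho = 1/\eps - 1$. Taking $\mu$ fixed (say $\mu = 2$) forces $\log\rho \approx \eps\log 2$, hence $\de\sim\eps$ and $L\sim C\eps^{-1}$, as claimed. The restriction $\eps\le 1/2$ ensures $\mu\ge\rho$, so the frequency genuinely increases relative to the block scale.
\end{itemize}
Between blocks, one checks that the amplitude bound $\abs{f_k}\le \exp(-c n_k X_k)$ interpolates monotonically. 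On $x<0$, reflect the construction; near $x=0$, glue in a compact region where $V$ is nonzero to fix the initial mode.

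The main obstacle is the switching block itself. We need an explicit solution in which one frequency mode dies while another is born, using only a diagonal $A$ with $\abs{\gr A}\lesssim 1/\ell$ and no potential outside a compact set. This also requires the decay rate in the block to remain $\gtrsim n$ uniformly after rescaling by $\ell$. I would first try taking the \cite{KLP25} unit block and dilating in $x$ by $\ell$ while setting the frequency to $n$. Under the dilation the ODE $f''=n^2 a f$ becomes $f''=(n\ell)^2 a(\cdot/\ell)f/\ell^2$, so the switching algebra, which depends only on the ratios of angular frequencies, is unchanged. This gives decay $\exp(-c\,n\ell)$ with $\abs{\gr A}\sim 1/\ell$. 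The key thing to verify is that the KLP matching identities hold with an absolute $\mu$ independent of $n\ell$ once $n\ell\gg1$.
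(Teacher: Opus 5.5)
Your bookkeeping matches the paper's. It uses frequencies $k_n=2^nk_0$. For $\tau=1-\eps$ the endpoints are $x_n=(n+1)^{1/\eps}$, and for $\tau=1$ they are $x_n=(1+\ga)^n$ with $\log(1+\ga)/\log 2=\eps/(1-\eps)$. Near the origin a potential $V_0=f''/f-k_0^2$ on $[-1,1]$ joins $\cos(k_0\te)e^{k_0x}$ to $\cos(k_0\te)e^{-k_0x}$, and the construction is reflected in $x$. You are also right that the intended bound is $\exp\set{-\exp(C\abs{x}^\eps)}$, as in Theorem \ref{thm:eg_1}.

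\textbf{The gap.} The switching block carries all the content of the construction, and the mechanism you describe for it cannot work. Suppose $A=\mathrm{diag}(a(x),b(x),1)$ depends only on $x$. Then the $(\te,\vp)$-Fourier coefficient of $u$ at frequency $(j,l)$ solves $\hat u''=(aj^2+bl^2)\hat u$ on its own, decoupled from every other mode. So there are no cross terms to cancel. Moreover, by ODE uniqueness with bounded coefficients, a mode present at one height can never vanish on an interval. Concretely, your ansatz forces $a=(\chi g)''/(n_k^2\chi g)$, which cannot stay bounded as $\chi g\to 0$. In fact, beyond the support of $V$ the initial mode $\cos(k_0\te)$ would persist and decay no faster than $e^{-\sqrt{\La}k_0x}$, so no super-exponential decay is possible at all. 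A potential supported near the origin does not help in the far blocks. The requirement of ``using only a diagonal $A$'' is therefore impossible to meet: the coefficients must depend on the angular variables.

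\textbf{What the paper does instead (Lemma \ref{lemma:transformation_block}).}
\begin{enumerate}
\item The block passes through $A_0=\mathrm{diag}(1,\tfrac14,1)$, for which $\cos(k\te)e^{-kx}$ and $\cos(2k\vp)e^{-kx}$ are \emph{both} solutions: same rate in $x$, different angular frequency.
\item Turning the second mode on with a cutoff $\eta_2(x)$ leaves the error $-(2k\eta_2'-\eta_2'')\cos(2k\vp)e^{-kx}$. This is absorbed by perturbing $A_0$ with entries of size $(2k\eta_2'-\eta_2'')/k^2=O((kT)^{-1})$ that oscillate at frequency $k$ in $(\te,\vp)$. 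These include the off-diagonal entry $b_2=-k^{-2}(2k\eta_2'-\eta_2'')\sin(k\te)\sin(2k\vp)$ in the $(\te,\vp)$ block.
\item The same is done with $b_3$ and $d_3$ to turn $\cos(k\te)e^{-kx}$ off.
\item The smallness $O((kT)^{-1})$ keeps ellipticity once $kT\gg 1$. Each angular derivative costs only a factor $k$, so $\abs{\gr A}\lesssim T^{-1}$.
\item Only then does a diagonal, $x$-dependent step of your type enter. With $\psi(x)=(1+\eta_4(x))x$ and $d_4=\tfrac14\brac{(\psi')^2-\psi''/k}$ in the $\vp\vp$ slot, the single mode $\cos(2k\vp)e^{-k\psi(x)}$ changes its $x$-rate from $k$ to $2k$.
\end{enumerate}

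\textbf{On scaling.} The block is built directly on $[0,T]$ with cutoffs $\eta(5x/T-i)$. Dilating a unit block in $x$ alone, as you propose, does not preserve the equation, because it rescales the $xx$ entry by $\ell^2$ relative to the angular ones.
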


The combination of our two main results shows that the sharp condition on the smoothness of $A$ that guarantees unique continuation at infinity (with a polynomial power) is \textit{linear} decay of the gradient $\abs{\nabla A (\te, x)}$ at infinity:
\begin{align}
\label{eqn:linear_intro}
    \abs{\nabla A(\te, x)} \le \frac{L}{1 + \abs{x}} \,\,\,\, \text{ for a.e. } \, (\te, x) \in \T^d \times \R^m.
\end{align}
We may refer to this kind of decay as ``Lipschitz decay at infinity".
Of course, by taking $d = 0$, we also obtain unique continuation at infinity in non-cylindrical domains (and without structural assumptions on the matrix $A$), and thus our work improves upon the previous known sufficient condition, i.e., $\abs{\nabla A(x)} \lesssim \abs{x}^{-1-\epsilon}$. 
Notice that the condition \eqref{eqn:linear_intro} does not force $A$ to have a limit at infinity.

Finally, when we have no lower order terms, i.e., $W, V \equiv 0$ in \eqref{eqn:eigensoln}, then we can improve the polynomial power of $q$ in Theorem \ref{ThmA} case (b), provided that $\abs{\nabla A(\te, x)}$ decays slightly faster in $x$:

\begin{thmx}[Rate of decay at infinity when coefficients have additional decay]
\label{ThmC}
    Let $A$ be a uniformly elliptic, symmetric, bounded matrix that has the block structure in \eqref{eqn:A_struct_intro} and satisfies
    \begin{align*}
        \abs{\nabla A(\te, x)} \le \frac{L}{1 + \abs{x} \log(\abs{x} + 1)} \,\,\,\, \text{ for a.e. } \, (\te, x) \in \T^d \times \R^m.
    \end{align*}
    Let $\la \in \C$ and let $u$ be a normalized solution to 
    \begin{align*}
        -\divv(A \nabla u) = \lambda u  \, \text{ in } \, \, \T^d \times \R^m,
    \end{align*}
    with bounded growth at infinity. Then there exists $R_0 \gg 1$ and $C >0$ so that whenever $\abs{x_0} \ge R_0$, it holds that 
    \begin{align*}
        \norm{u}_{L^2(  \T^d \times B_1(x_0) )} \ge \exp \set{   - C\pr{\sqrt{\la} +1} \abs{x_0} \exp{ \brac{\pr{\log\abs{x_0}}^{3/4}  }} } .
    \end{align*}
\end{thmx}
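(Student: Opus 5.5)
Following the proof of Theorem \ref{ThmA}, the plan is to prove Theorem \ref{ThmC} by a Carleman estimate combined with a three-ball (propagation of smallness) chain. The difference is that the weight is adapted to the logarithmically improved decay of $\abs{\nabla A}$. There are no lower-order terms, but the eigenvalue term $\la u$ is still present. To handle it, I would first absorb it by a standard lifting: set $w(\te,x,s) := e^{\sqrt{\la}\, s} u(\te,x)$, with the branch of $\sqrt{\la}$ having nonnegative real part. Then $w$ solves the divergence-form equation $-\divv(\tilde A \nabla w) = 0$ in $\T^d\times\R^{m+1}$ with $\tilde A = \mathrm{diag}(A,1)$. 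This preserves the block structure \eqref{eqn:A_struct_intro}, and $\abs{\nabla \tilde A(\te,x,s)}$ still obeys the hypothesis in the $x$-variables. Working on a slab $\abs{s}\le 1$, which costs a factor $e^{C\abs{\la}^{1/2}}$ in the growth bounds, reduces everything to the case $W,V\equiv 0$, $\la=0$. This is the source of the $(\sqrt{\la}+1)$ factor.

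\textbf{Step 1: the Carleman estimate.} I would prove an estimate on $\T^d\times\R^{m}$ (times the $s$-slab) for functions supported in an annulus $\set{R\le \abs{x}\le 2R}$ or in a cone region around $x_0$. The weight is $\phi(\abs{x})$, with $\phi$ chosen so that the commutator term arising from the symmetric-operator/antisymmetric-operator splitting has a positive sign. The errors come from derivatives of $A$, and the block structure ensures that $\theta$-derivatives of $A$ only enter through the $\T^d$ block, which is compact. In the linear-weight case $\phi = \alpha\abs{x}$ (the $q=1$ rate), the commutator gains a term like $\alpha\phi''$. With a merely logarithmic profile, convexity is too weak to dominate errors of size $\alpha\abs{\nabla A}\abs{\nabla\phi}$. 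The fix is to take
$$
\phi(r) = r\, \exp\pr{(\log r)^{3/4}},
$$
or a close variant. Then $\phi''(r) \approx \phi'(r)\, r^{-1}(\log r)^{-1/4}$, whereas the error is $\phi'\abs{\nabla A}\lesssim \phi'\, L\,(r\log r)^{-1}$. Since $(\log r)^{-1/4}\gg L(\log r)^{-1}$ for large $r$, positivity holds for any $L$. This is exactly why the exponent $3/4$ (anything in $(0,1)$) appears and why $L$ need not be small.

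\textbf{Step 2: the iteration.} With the Carleman inequality in hand, I would apply it to $\eta w$, where $\eta$ is a cutoff equal to $1$ on a region linking the unit-size set where $u$ is normalized to $B_1(x_0)$. The cutoff errors near $x_0$ are controlled by Caccioppoli and the bounded-growth assumption. The parameter is chosen as $\alpha\approx C(\sqrt{\la}+1)$, large enough to beat the far-away errors; this yields $\norm{u}_{L^2(\T^d\times B_1(x_0))}\gtrsim \exp\set{-C\,\phi(\abs{x_0})}$. Equivalently, one can phrase this as a rescaled three-annuli argument, as in the proof of Theorem \ref{thm:tau=1}, but with the scale-dependent weight above. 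Finally, passing back from $w$ to $u$ only costs the $e^{C\abs{\la}^{1/2}}$ factor noted above.

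The main obstacle is Step 1. The weight must have enough pseudoconvexity to absorb $\abs{\nabla A}\sim L/(r\log r)$ uniformly in $L$, yet grow at most like $r\,e^{(\log r)^{3/4}}$. It must also handle the $\T^d$ directions, where no decay in $\te$ is available, through the block structure: the weight depends only on $x$, so $\partial_\te A^{(1)}$ pairs only with $\nabla_\te$ terms that integrate by parts harmlessly on the torus. I would try first a direct conjugation computation with $\phi$ radial in $x$, checking that the cross terms involving $A^{(2)}$ derivatives are $O(\alpha\phi'\abs{\nabla A})$ and absorbed as above.
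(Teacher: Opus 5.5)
The gap is in Step~2, where the Carleman estimate has to give a lower bound on $\mathbb{T}^d\times B_1(x_0)$ specifically. Your weight $e^{\alpha\phi(\lvert x\rvert)}$ increases in $\lvert x\rvert$. Enlarging $\alpha$ therefore only dominates cutoff errors lying where $\phi$ is smaller than on the region where $u$ is known not to be small, i.e.\ the layer near the origin. Every other error term carries the weight $e^{2\alpha\phi(r)}$ of its own radius $r$ and is harmless only if $u$ itself is small there. So a large $\alpha$ cannot ``beat the far-away errors'', and you know smallness only on $\mathbb{T}^d\times B_1(x_0)$.

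For $m\ge 2$, any cutoff linking $\mathbb{K}_1$ to $B_1(x_0)$ (a tube or a cone) has lateral transition layers at every radius $r<\lvert x_0\rvert$. There $u$ is controlled only by the growth bound, and the weight is $e^{2\alpha\phi(r)}\gg e^{2\alpha\phi(1)}$. A weight depending only on $\lvert x\rvert$ cannot decrease transversally, so these terms cannot be absorbed. A radial cutoff avoids this, but then you only bound $u$ from below on the whole shell $\mathbb{T}^d\times\{\lvert\lvert x\rvert-\lvert x_0\rvert\rvert<1\}$, which is much weaker than the theorem. (For $m=1$ a one-sided weight $\phi(x)$ does localize.)

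Your $s$-slab has the same defect and breaks even the case $m=1$. The cutoff in $s$ produces errors of size $(\lvert\lambda\rvert+1)e^{2\alpha\phi}\lvert w\rvert^2$ at every $x$ in the support, where $u$ is not small. Your Carleman gain is $\alpha^3\phi'^2\phi''\approx\alpha^3e^{3(\log r)^{3/4}}r^{-1}(\log r)^{-1/4}\to 0$, so these errors can be neither absorbed nor neglected. There are two ways around this. A constant $\lambda$ can be kept inside the conjugated operator, where it commutes with the antisymmetric part and only forces $\alpha\gtrsim\sqrt{\lvert\lambda\rvert}$. Alternatively, one lifts and works in balls of radius comparable to $\lvert x_0\rvert$ in the $(x,s)$ variables; this is where the factor $(\sqrt{\lambda}+1)\lvert x_0\rvert$ really comes from in the paper.

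What is missing is a mechanism that carries information from the origin out to $x_0$ through regions staying away from the origin, with losses controlled by $\int\lvert\nabla A\rvert$. The paper uses no Carleman estimate. A frequency function whose monotonicity defect is $\int_0^r\lVert\nabla A\rVert_{L^\infty(\mathbb{K}_s)}\,ds$ gives three-cylinder inequalities. These are applied at points $x_k$ on the ray through $x_0$, with $\lvert x_k\rvert=R_k=R_{k-1}\gamma(R_{k-1})$ and outer cylinder reaching down only to $\lvert x\rvert=R_{k-1}/2$. The defect is then bounded by a constant times $\int_{R_{k-1}/2}^{R_k}\frac{L}{s\log s}\,ds\le L\log 3$, uniformly in $k$. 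Each step costs a factor $\kappa_k^{-1}\approx\gamma(R_{k-1})\log R_k$. With $g=(\sqrt{\lambda}+1)R$ and $h\approx\gamma(R)\log R$, the induction closes provided $\gamma(R)\log R/\gamma(R\gamma(R))\to0$. For $\gamma=e^{(\log R)^{\beta}}$ this requires $\beta>1/2$, and taking $\beta<3/4$ to absorb the extra $\log\lvert x_0\rvert$ gives the stated bound.

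So the exponent comes from the iteration, not from pseudoconvexity. Your Step~1 is the more standard part: essentially any convex $\phi$ with $\phi''\ge K\phi'\lvert\nabla A\rvert$ works. The $\theta$-gradients, which get no Hessian positivity, are controlled through the symmetric part of the conjugated operator. A Carleman route could be repaired, e.g.\ by the radial shell estimate followed by a bounded number of three-ball inequalities at scale $\lvert x_0\rvert$ along the sphere. There the accumulated gradient error is $O(L/\log\lvert x_0\rvert)$, and the total loss is only a power of $\log\lvert x_0\rvert$ in the exponent. Some such localization step is indispensable, and it is absent from your plan.
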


We don't know if the above theorem is still true when we assume \eqref{eqn:smooth_intro} with $\tau  = 1$ and $L$ sufficiently small. 
When we attempted to prove the result in this more general setting, the exponent $q$ stayed ``stuck" above $1$.
For a discussion of this subtlety and how it relates to our techniques, see Remark \ref{rmk:log_prob}.

\subsection{Discussion of techniques, and outline of the paper}

To discuss the techniques that we use to prove our main results, it is useful to draw a parallel to the case of \textit{local} unique continuation properties for elliptic PDEs and the tools used there.

\textit{Frequency functions} are one of the main tools that are used to study unique continuation properties of elliptic PDEs. 
If $u$ is a solution of $-\divv(A \nabla u) = 0$, then Almgren's frequency function takes the form 
\begin{align*}
    N(r) = \frac{r \int_{E_r} A \nabla u \cdot \nabla u \; dx   }{  \int_{\partial E_r} u^2 \mu \; d\sigma },
\end{align*}
where $E_r$ is an ellipse centered at the origin of diameter $\simeq r$, and $\mu \simeq 1$ is some factor depending on $A$. 
When $A$ is Lipschitz, then $N(r)$ is almost monotone \cite{GL86}: 
\begin{align*}
    \dfrac{d}{dr} N(r) \ge -CN(r), 
\end{align*}
where $C$ depends on the dimension, $\norm{\nabla A}_\infty$, and the ellipticity of $A$. 
The almost-monotonicity of $N(r)$ has far-reaching consequences to quantitative unique continuation results for solutions (e.g., propagation of smallness, nodal set estimates, boundary unique continuation). 
One of the fundamental applications of the almost-monotonicity of $N(r)$ is a three-ball inequality: 
If $u$ solves $-\divv(A \nabla u )= 0$ in $B_1$, then for $0 < r_1 < r< r_2 < 1$, 
\begin{align}\label{eqn:3ball}
    \fint_{B_r} u^2 \le C \left( \fint_{B_{r_1}}u^2  \right)^{\alpha} \left( \fint_{B_{r_2}} u^2 \right)^{1-\alpha}
\end{align}
where $C > 0$ and $\alpha \in (0,1)$ depends only on $r_1, r, r_2$, and $A$. 

Inequalities of the type \eqref{eqn:3ball} are especially useful when proving unique continuation at infinity results.
For example, if $u$ is a bounded solution, then the inequality above gives lower bounds on $u$ on the smaller ball $B_{r_1}$ in terms of the lower bound on $u$ in the larger (mid-size) ball $B_r$. 
This mechanism is precisely what we use to pass quantitative information about our solution near the origin to information about our solution on cylinders that are far from the origin. 
One of the main difficulties is that the constants $C > 1$ and $\alpha \in(0,1)$ come with errors that depend on $\norm{\nabla A}_\infty$.
Therefore, to obtain sharp unique continuation at infinity results, one must obtain three-cylinder inequalities that minimize these errors in a very precise way.
The lower order terms introduce additional challenges.

Our starting point is to prove three-cylinder inequalities for solutions of \eqref{eqn:eigensoln}, see Proposition \ref{prop:3cylinder} and Corollary \ref{cor:3cylinder}.
As indicated above, these inequalities follow from monotonicity properties of appropriate frequency functions, see Corollary \ref{cor:monotone_cyl}. 
The frequency function that we use (see \eqref{eqn:N_def}) generalizes those that appear in \cite{KN98}, \cite{Kuk00} to the cylindrical setting. 
Frequency functions of this type have garnered more attention in recent years for their utility in proving sharp quantitative unique continuation results, both locally and at infinity (see for example, \cite{Zhu16}, \cite{BG16}, \cite{Dav26} and \cite{TWZ26}). 
While such three-cylinder inequalities can also be proved by other means (e.g., Carleman estimates), the benefit of the frequency function approach is the precision with which we control the errors in the monotonicity, and how they depend on $\norm{\nabla A}_\infty$. 

With an appropriate three-cylinder inequality in hand, we next prove a very general iteration scheme that allows us to establish a variety of Landis-type results. 
In particular, this scheme shows precisely how the rate of decay of $\abs{\nabla A(\te, x)}$ affects the rate of decay at infinity of solutions, see Proposition \ref{prop:decay_cyl_p_eig}. 
We apply this very general iteration scheme to obtain different results when $\abs{\nabla A(\te, x)}$ has polynomial decay in $x$, i.e., $\abs{\nabla A(\te, x)} \lesssim (1 + \abs{x})^{-\tau}$ for some $\tau \in [0,1]$.
This gives Theorem \ref{ThmA}; see Theorems \ref{thm:tau_betw} and \ref{thm:tau=1} for the precise statements.
We also apply the iteration scheme when $\abs{\nabla A(\te, x)}$ has additional decay in $x$, i.e., $\abs{\nabla A(\te, x)} \lesssim \brac{1 + \abs{x} \log\pr{1 + \abs{x}} }^{-1}$.
This gives Theorem \ref{ThmC}, precisely stated in Theorem \ref{thm:tau=1+}.

In our second main theorem, we adapt the techniques of \cite{KLP25} to construct global solutions of Schr\"{o}dinger-type equations in the cylinder $\T^2 \times \R$ that exhibit fast decay at infinity. 
The main tool in this construction is the ``building block", Lemma \ref{lemma:transformation_block}.
Essentially, this lemma shows that one can build an elliptic operator $-\divv (A \nabla \, \cdot)$  in $\T^2 \times [0, T]$ that ``transforms'' the harmonic function $u(\te, \vp, x) = \cos(k\te) e^{-kx}$ into the harmonic function (with faster decay) $v(\te, \vp, x) = \cos(2k\vp)e^{-2kx}$. 
By ``transforms'', we mean that there is a solution $w(\te, \vp, x)$ of $-\divv(A \nabla w) = 0$ in $\T^2 \times [0,T]$ that agrees with $u$ near $x = 0$ and agrees with $v$ near $x = T$. 
The cost of this transformation is that $A$ must oscillate at a certain rate: $\abs{ \nabla A } \le C T^{-1}$. 
This construction originated in \cite{Mil74}, but has since been used in \cite{Man98} and \cite{KLP25}.
For completeness, we provide a complete proof here. 
Using this ``building block", we construct elliptic operators $A$ and solutions $u$ in the whole $\T^2 \times \R$ that satisfy the conclusion of Theorem \ref{ThmB}.
For a precise statement, see Theorem \ref{thm:eg_1}.

The rest of the paper is organized as follows. 
In Section \ref{sec:montonicity}, we introduce our frequency function $N(r)$ and deduce almost-monotonicity results as described in Corollary \ref{cor:monotone_cyl}. 
With these monotonicity results, we prove three-cylinder inequalities for solutions and eigenfunctions, see Proposition \ref{prop:3cylinder} and Corollary \ref{cor:3cylinder}. 
In Section \ref{sec:applications}, we apply the three-cylinder inequalities to obtain decay rates at infinity of solutions in the case that $\abs{\nabla A(\te, x)} \le f(\abs{x})$ as $\abs{x} \ra \infty$, see Proposition \ref{prop:decay_cyl_p_eig}.
We then use this proposition to deduce lower bounds on solutions in the case that $\abs{\nabla A(\te, x)}$ decays polynomially in $x$ (Theorems \ref{thm:tau_betw} and \ref{thm:tau=1}) and slightly faster than polynomially (Theorem \ref{thm:tau=1+}). 
Finally, in Section \ref{sec:examples}, we construct the sharp examples that are described in Theorem \ref{ThmB} and made precise in Theorem \ref{thm:eg_1}.

\subsection{Notation}

For $d \in \Z_{\ge 0}$ and $m \in \N$, we work in $(d+m)$-dimensional cylinders of the form $\T^d \times \R^m$, where $\T^d := \R^d / \pr{2\pi \Z}^d$ denotes a $d$-dimensional torus.
For a general point, we write $(\te, x) \in \T^d \times \R^m$, where $\te = (\te_1, \ldots, \te_d) \in \T^d$ and $x = (x_1, \ldots, x_m) \in \R^m$.
Given a function $u = u(\te, x)$, we write $\gr u(\te, x)$ to denote the full gradient, the $(d+m)$-vector field.
We may decompose the gradient as $\gr u = \pr{\gr_\te u, \gr_x u}$.
Given a point $x_0 \in \R^m$, $\mathbb{K}_r(x_0) \coloneqq \T^d \times B_r(x_0)$ denotes the cylinder of radius $r > 0$ centered at $x_0$.
If $x_0 = 0$ or is understood from the context, we may simply write $\mathbb{K}_r \coloneqq \T^d \times B_r$.

\section{Monotonicity formulas and three-cylinder inequalities}
\label{sec:montonicity}

In this section, we focus on solutions $u$ to uniformly elliptic equations of the type
\begin{align}\label{eqn:soln_cyl}
	-\divv(A \nabla u) + W \cdot \nabla u + V u & = 0
\end{align}
in $\cyl$, where $V \in L^\infty(\T^d \times \R^m ; \C)$ and $W \in L^\infty(\T^d \times \R^m ; \C^{d+m})$ satisfy
\begin{align}\label{eqn:pot_bounds}
	\norm{W}_{L^\infty} \le K, \, \, \norm{V}_{L^\infty} \le M.
\end{align}
By solution, we mean \textit{weak} solution, i.e., that $u \in H^1_{\loc}(\T^d \times \R^m)$ and the equation \eqref{eqn:soln_cyl}
 holds in the weak sense:
\begin{align*}
    \int_{\T^d \times \R^m} \brac{A \nabla u \cdot \nabla v + (W \cdot \nabla u) v  + Vuv }\; d\te dx = 0
    \quad \text{ for all } v \in C_c^\infty(\T^d \times \R^m). 
\end{align*}
We always assume that $A$ is at least Lipschitz continuous, and thus $A$ is differentiable almost everywhere. 
By the standard elliptic regularity theory (see for example \cite[Theorem 1, Section 6.3.1]{Ev10} that only uses that $A$ is Lipschitz), we then have that $u \in H^2_{\loc}(\T^d \times \R^m)$.
Therefore, the equation \eqref{eqn:soln_cyl} also holds in the strong sense, i.e., the equation \eqref{eqn:soln_cyl} holds pointwise almost everywhere.

\subsection{Matrix assumptions}

For $d \in \Z_{\ge 0}$ and $m \in \N$, we assume that the matrix $A \in \Lip(\T^d \times \R^m; \mathbb{M}^{d+m})$ is bounded and uniformly elliptic, symmetric, and has the block structure
\begin{align}
\label{eqn:A_struct}
	A(\te, x) = \begin{pmatrix}
		A^{(1)}(\te, x) & 0 \\
		0 & A^{(2)}(\te, x)
	\end{pmatrix}.
\end{align}
We write $A = (a_{ij})_{i, j = 1}^{d+m}$, $A^{(1)} = (a^{(1)}_{ij})_{i, j = 1}^{d}$, and $A^{(2)} = (a^{(2)}_{ij})_{i, j = 1}^{m}$ to denote the entries of $A$, $A^{(1)}$, and $A^{(2)}$, respectively.
We assume that $A^{(1)}$ is a symmetric, bounded, uniformly elliptic $d \times d$ matrix, and that $A^{(2)}$ is a symmetric, bounded, uniformly elliptic $m \times m$ matrix. 
The symmetry, uniform boundedness, and ellipticity of the whole matrix $A$ are such that for all $(\te, x) \in \cyl$,
\begin{align}
    & a_{ij}(\te, x) = a_{ji}(\te, x) 
    \label{eqn:A_symm} \\
    & \abs{a_{ij}(\te, x) \xi_i \zeta_j} \le \La \abs{\xi} \abs{\zeta}  \text{ for all } \xi, \zeta \in \R^{d + m} 
    \label{eqn:A_bound} \\
    & a_{ij}(\te, x) \xi_i \xi_j \ge \La^{-1} \abs{\xi}^2 \text{ for all } \xi \in \R^{d + m}.
    \label{eqn:A_ellip}
\end{align}
For local results, we assume that such conditions hold on some $\mathbb{K}_R \su \T^d \times \R^m$.

To make some of our statements more succinct, we introduce the following matrix classes.

\begin{defn}[Matrix classes]
\label{defn:matClass}
    For $d \in \Z_{\ge 0}$ and $m \in \N$, we say that the matrix $A \in \Lip(\T^d \times \R^m; \mathbb{M}^{d+m})$ belongs to the matrix class $\mathcal{M}(d, m, \La)$ if for all $(\te, x) \in \T^d \times \R^m$, $A$ is symmetric \eqref{eqn:A_symm}, bounded \eqref{eqn:A_bound}, elliptic \eqref{eqn:A_ellip}, and has the block structure in \eqref{eqn:A_struct} where $A^{(1)}$ is $d \times d$ and $A^{(2)}$ is $m \times m$.
    Further, we say that $A \in \mathcal{M}(d, m, \La)$ belongs to the matrix class $\mathcal{M}_0(d, m, \La)$ if $A^{(2)} = I_m$.
    If $A \in \Lip(\mathbb{K}_R; \mathbb{M}^{d+m})$ satisfies \eqref{eqn:A_struct} -- \eqref{eqn:A_ellip} for all $(\te, x) \in \mathbb{K}_R$, then we say that $A \in \mathcal{M}(\mathbb{K}_R; d, m, \La)$.
    If $A^{(2)} = I_m$ as well, then $A \in \mathcal{M}_0(\mathbb{K}_R; d, m, \La)$.
\end{defn}

It is worth mentioning that although this structural assumption may seem somewhat arbitrary, the examples in \cite{KLP25} belong to this class of matrices. 
Moreover, given that we allow $d = 0$, our results also apply to Euclidean space, $\R^m$.
If $d \ne 0$, we may abuse notation and identify $x \in \R^m$ with $(0,x) \in \R^{d+m}$, only doing so when it is clear from context with the inner product notation or matrix multiplication.
For example, we notice that the structure of $A$ as in \eqref{eqn:A_struct} is such that 
\begin{align}
\label{eqn:struct_c1}
	A(\te, x)x = A^{(2)}(\te, x) x, \qquad 
    A(\te, x) x \cdot x = A^{(2)}(\te, x)x \cdot x.
\end{align}

\subsection{Estimates for the coefficient matrix}

In the forthcoming monotonicity results, we require certain estimates on the coefficient matrix.
For $A \in \mathcal{M}(d, m, \La)$, we introduce the following notation.
First, we have bounds on $\abs{\nabla A}$: 
for each $ r> 0$, define  
\begin{align}
\label{eqn:A_lip}
	\eta(r) \coloneqq \norm{\nabla  A(\te, x)}_{L^\infty(\mathbb{K}_r)} < \infty.
\end{align}
Then let $\phi$ denote the primitive of $\eta$, 
\begin{equation}
\label{eqn:phi_def}   
    \phi(r) \coloneqq \int_0^r \eta(s) \, ds = \int_0^r \norm{\nabla  A(\te, x)}_{L^\infty(\mathbb{K}_s)}\, ds .
\end{equation}
In the end, our estimates depend quantitatively only on the value $\eta(r)$ as defined in \eqref{eqn:A_lip}.
The conformal factor $\mu(\te, x) \simeq_{\Lambda } 1$ is defined as
\begin{align}
\label{eqn:conf}
	\mu(\te, x) \coloneqq A(\te, x) \frac{x}{\abs{x}} \cdot \frac{x}{\abs{x}} = A^{(2)}(\te, x) \frac{x}{\abs{x}} \cdot \frac{x}{\abs{x}}, 
\end{align}
where the second equality follows from the observation \eqref{eqn:struct_c1}.
Finally, we also use the following vector field
\begin{align}
\label{eqn:vec_field}
	Z(\te, x) \coloneqq \frac{ A(\te, x) x }{\mu(\te, x)}.
\end{align}
The next lemma provides estimates on $Z(\te, x)$ and $\mu(\te, x)$ that we use below to establish monotonicity. 
We provide a brief proof, since similar computations can be found in \cite{Dav26}, for example. 

\begin{lemma}[Matrix estimates]
\label{lem:conf_est}
    For $A \in \mathcal{M}(d, m, \La)$, let $\mu(\te, x)$ and $Z(\te, x)$ be defined by \eqref{eqn:conf} and \eqref{eqn:vec_field}, respectively. 
    If $A^{(2)}(\te_0, 0) = I$ for some $\te_0 \in \mathbb{T}^d$, then there exists $C(d, m, \La) > 0$ so that for almost all $(\te, x) \in \T^d \times (B_r \setminus \{0\}) \subset \T^d \times \R^m$, 
	\begin{align}
		\abs{ \divv(A(\te, x) x)- \mu(\te, x) m} & \le C (r+1)\eta(r) \label{eqn:conf1}\\ 
		\abs{ D Z(\te, x)  - \begin{pmatrix}
			0 & 0 \\
			0 & I 
		\end{pmatrix}} & \le  C (r+1)\eta(r) ,
        \label{eqn:conf2}
	\end{align}
    where $\eta(r)$ is defined in \eqref{eqn:A_lip}.
\end{lemma}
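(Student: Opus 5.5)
We compare $A^{(2)}(\te,x)$ with the identity and treat all remaining terms as errors of size $\abs{x}\,\abs{\nabla A}$. Since $\mathbb{K}_r = \T^d\times B_r$ is convex in the $x$-variables and periodic in $\te$, we first bound $\abs{A^{(2)}(\te,x) - I}$ for $(\te,x)\in\T^d\times B_r$. We join $(\te_0,0)$ to $(\te,x)$ by a path inside $\mathbb{K}_r$: first move in $\te$ along a geodesic of $\T^d$ of length at most $\sqrt d\,\pi$, then radially in $x$ from $0$ to $x$. Integrating $\nabla A$ along this path, and using $A^{(2)}(\te_0,0)=I$, gives
\begin{align*}
\abs{A^{(2)}(\te,x)-I}\le C(d)\,(1+\abs{x})\,\eta(r)\le C(r+1)\eta(r).
\end{align*}
Lipschitz functions are absolutely continuous along a.e. such path, or we simply use the Lipschitz bound on $\mathbb{K}_r$ directly. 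Ellipticity also gives $\La^{-1}\le\mu\le\La$.

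For \eqref{eqn:conf1}, the block structure shows that $A x$ has only $x$-components $A^{(2)}x$. Hence
\begin{align*}
\divv(Ax)=\sum_{i,j\le m}\partial_{x_i}\bigl(a^{(2)}_{ij}x_j\bigr)=\operatorname{tr}A^{(2)}+\sum_{i,j}(\partial_{x_i}a^{(2)}_{ij})x_j .
\end{align*}
The second term is bounded by $C\abs{x}\eta(r)$. For the first term,
\begin{align*}
\abs{\operatorname{tr}A^{(2)}-m}\le C\abs{A^{(2)}-I},
\end{align*}
and, since $\mu$ is a quadratic form of $A^{(2)}$ on a unit vector,
\begin{align*}
\abs{\mu-1}\le\abs{A^{(2)}-I}.
\end{align*}
Therefore $\abs{\operatorname{tr}A^{(2)}-\mu m}\le C\abs{A^{(2)}-I}\le C(r+1)\eta(r)$, which gives \eqref{eqn:conf1}.

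For \eqref{eqn:conf2}, write $Z=(0,\,A^{(2)}x/\mu)$. The derivative $DZ$ has zero rows in the $\te$-components. The $x$-rows consist of a $\te$-derivative block and an $x$-derivative block. We use the product rule:
\begin{align*}
D\Bigl(\frac{A^{(2)}x}{\mu}\Bigr)=\frac{(DA^{(2)})x+A^{(2)}\,D_x x}{\mu}-\frac{A^{(2)}x\otimes\nabla\mu}{\mu^2}.
\end{align*}
The terms with $(DA^{(2)})x$ are $O(\abs{x}\eta)$. The key term is $\nabla\mu$, and this is the one delicate point, because $\mu$ depends on $x/\abs{x}$, whose gradient is of order $1/\abs{x}$. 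Expanding,
\begin{align*}
\nabla\mu=(\nabla A^{(2)})\hat x\cdot\hat x+2A^{(2)}\hat x\cdot D(\hat x),\qquad \hat x=x/\abs{x},\quad D\hat x=\frac{I-\hat x\otimes\hat x}{\abs{x}}.
\end{align*}
The first part is $O(\eta)$, so multiplied by $\abs{x}$ it gives $O(\abs{x}\eta)$. The second part is $\frac{2}{\abs{x}}(I-\hat x\otimes\hat x)A^{(2)}\hat x$. Since $(I-\hat x\otimes\hat x)\hat x=0$, this equals
\begin{align*}
\frac{2}{\abs{x}}(I-\hat x\otimes\hat x)(A^{(2)}-I)\hat x=O\Bigl(\frac{(1+\abs{x})\eta(r)}{\abs{x}}\Bigr).
\end{align*}
Multiplying by $\abs{A^{(2)}x}/\mu^2\lesssim\abs{x}$ cancels the $1/\abs{x}$, giving $O((r+1)\eta(r))$. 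This cancellation is the main obstacle to handle carefully. Finally, the remaining term $A^{(2)}/\mu$ differs from $I$ by $C\abs{A^{(2)}-I}\le C(r+1)\eta(r)$, and the $\te$-derivative block of the $x$-rows is $O(\abs{x}\eta)$. Altogether $DZ$ differs from $\begin{pmatrix}0&0\\0&I\end{pmatrix}$ by at most $C(r+1)\eta(r)$, which is \eqref{eqn:conf2}.

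All bounds hold at almost every point, since $A$ is differentiable almost everywhere, and the constants depend only on $d$, $m$ and $\La$.
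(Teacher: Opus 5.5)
Your proposal is correct and follows the paper's proof. You bound $\abs{A^{(2)}-I}$ and $\abs{\mu-1}$ by the Lipschitz estimate from $(\te_0,0)$, expand $\divv(Ax)=\operatorname{tr}A^{(2)}+O(\abs{x}\eta(r))$, and compare traces; for \eqref{eqn:conf2} you fill in the computation the paper only sketches, and your use of $(I-\hat x\otimes\hat x)\hat x=0$ to replace $A^{(2)}$ by $A^{(2)}-I$ in $\nabla\mu$ is exactly what absorbs the $1/\abs{x}$ factor from $D\hat x$.
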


\begin{proof}
    We only provide the proof of \eqref{eqn:conf1}, since that of \eqref{eqn:conf2} is very similar.

    To this end, first we notice that since $A$ is uniformly elliptic, then for $\te \in \T^d$, $x \in B_r \setminus \{0\}$, 
    \begin{align*}
        \mu(\te, x)  = \frac{ \langle A(\te, x) x, x \rangle }{\abs{x}^2} = \frac{ \langle A^{(2)}(\te, x) x, x \rangle }{\abs{x}^2}  \le \Lambda, \, \, \mu(\te, x)^{-1} \le \Lambda.
    \end{align*}
    Since $A$ is Lipschitz, then
    \begin{align*}
        \abs{  \mu(\te, x) - 1   } & = \abs{  \frac{ \langle A^{(2)}(\te, x) x, x \rangle }{\abs{x}^2} - \frac{\langle I x, x \rangle }{\abs{x}^2} } \\
        & = \abs{  x }^{-2} \abs{ \innp{ \brac{A^{(2)}(\te, x) - A^{(2)}(\te_0,0)}x, x }}\\
        & \le \norm{\nabla A}_{L^\infty(\mathbb{K}_r)} \abs{ (\te, x) - (\te_0, 0)  } \le C (r+1)\eta(r).
    \end{align*}
    Next, we compute directly using the structural assumption \eqref{eqn:A_struct}, 
    \begin{align*}
        \divv(A(\te, x)x)  
        &= \divv_x(A^{(2)}(\te, x) x)  
        = \sum_{i,j=1}^{m} \partial_{x_j}  (  a^{(2)}_{ij} (\te, x) x_i ) \\
        &= \sum_{i,j=1}^{m} a^{(2)}_{ij} (\te, x) \delta_{ij} 
        + \sum_{i,j=1}^{m} \partial_{x_j} a^{(2)}_{ij}(\te, x) x_i 
        = \mathrm{Tr}(A^{(2)}(\te, x)) + O( r \eta(r)  ) 
    \end{align*}
    for almost every $(\te, x)$.
    Using again the Lipschitz nature of $A$ and the fact that $A^{(2)}(\te_0,0) = I$, we can estimate 
    \begin{align*}
        \abs{\mathrm{Tr}(A^{(2)}(\te, x)) - m} \le C (r+1)\eta(r),
    \end{align*}
    and use the estimate on $\abs{\mu(\te, x) - 1}$ above to conclude that for a.e. $(\te, x) \in \T^d \times B_r \setminus \set{0}$, it holds
    \begin{align*}
        \abs{ \divv(A(\te, x)x) - m\mu(\te, x)  } \le Cm \eta(r)(r+1),
    \end{align*}
    which is the estimate \eqref{eqn:conf1}.
    The proof of \eqref{eqn:conf2} follows a very similar computation, but one also needs to estimate $\nabla(\mu(\te, x)^{-1})$, which additionally uses the fact that $\mu(\te, x)\simeq_\Lambda 1$.
\end{proof}

\subsection{Frequency functions and almost-monotonicity}

We now introduce our frequency functions and the associated quantities that are used to prove almost-monotonicity results for solutions $u$ to \eqref{eqn:soln_cyl}. 
To begin, we define the weight function 
$$w_r(\te, x) = w_r(x) \coloneqq r^2 - \abs{x}^2$$
and notice that $w_r$ vanishes on $\mathbb{T}^d \times \del B_r(0)$.
This observation allows us to compute the following derivatives. 
This lemma will be used throughout this subsection.

\begin{lemma}[Derivative result]
\label{lem:deriv}
	Given $f \in W^{1,1}(\mathbb{K}_R)$, $\al \ge 1$, define
	\begin{align*}
		F(r) \coloneqq \int_{\mathbb{K}_r} f(\te, x) \, w_{r}(\te, x)^\alpha \, d\te dx.
	\end{align*}
	For any $r \in (0, R$), we have that 
	\begin{align*}
		F'(r) & = \dfrac{2 \alpha }{r} F(r) + \dfrac{2\alpha}{r} \int_{\mathbb{K}_r} \abs{x}^2 f w_r^{\alpha-1} \\
		& = \dfrac{2 \alpha + m}{r} F(r) + \dfrac{1}{r} \int_{\mathbb{K}_r} (\nabla f \cdot x) w_r^\alpha.
	\end{align*}
\end{lemma}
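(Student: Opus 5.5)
The plan is to differentiate $F$ directly in $r$, then obtain the second expression by an integration by parts in $x$. The key point is that the weight $w_r^\alpha$ vanishes on $\T^d \times \partial B_r$ when $\alpha \ge 1$. Consequently no boundary term arises, either from the moving domain or from integrating by parts.

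\textbf{First identity.} Since $w_r = 0$ on $\T^d\times\partial B_r$ and $\alpha\ge 1$, we may write
\[
F(r)=\int_{\mathbb{K}_R} f\,(r^2-\abs{x}^2)_+^\alpha .
\]
The map $r\mapsto (r^2-\abs{x}^2)_+^\alpha$ is $C^1$ for $\alpha \ge 1$ (for $\alpha=1$ it is only Lipschitz, but the derivative exists for a.e. $x$ and is bounded). Its derivative is $2\alpha r\,(r^2-\abs{x}^2)_+^{\alpha-1}$, which is bounded on $\mathbb{K}_R$. Dominated convergence, using $f\in L^1$, therefore gives
\[
F'(r)=2\alpha r\int_{\mathbb{K}_r} f\,w_r^{\alpha-1}.
\]
To match the stated form, split $r^2 = w_r + \abs{x}^2$:
\[
2\alpha r\int f w_r^{\alpha-1} = \frac{2\alpha}{r}\int f\,(w_r+\abs{x}^2)\,w_r^{\alpha-1} = \frac{2\alpha}{r}F(r)+\frac{2\alpha}{r}\int \abs{x}^2 f w_r^{\alpha-1}.
\]

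\textbf{Second identity.} Note that
\[
\nabla_x w_r^\alpha = -2\alpha\,x\,w_r^{\alpha-1}, \qquad\text{so}\qquad \abs{x}^2 w_r^{\alpha-1} = -\tfrac{1}{2\alpha}\,x\cdot\nabla_x (w_r^\alpha).
\]
Hence
\[
\frac{2\alpha}{r}\int \abs{x}^2 f w_r^{\alpha-1} = -\frac1r\int f\,x\cdot\nabla_x(w_r^\alpha).
\]
Now integrate by parts in $x$ over $B_r$, for each fixed $\te$, and then integrate in $\te$. The boundary term vanishes because $w_r^\alpha=0$ on $\partial B_r$. Using $\divv_x(fx)=\nabla f\cdot x+mf$, this yields
\[
\frac1r\int (\nabla f\cdot x)\,w_r^\alpha+\frac mr F(r).
\]
Combined with the first identity, this gives the second expression.

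\textbf{Main obstacle.} The only delicate point is justifying the integration by parts for $f\in W^{1,1}$ against the Lipschitz function $w_r^\alpha$. I would handle this by approximating $f$ by smooth functions in $W^{1,1}(\mathbb{K}_R)$; every term is continuous in the $W^{1,1}$ norm since $w_r^\alpha$ and $\abs{x}w_r^{\alpha-1}$ are bounded. Alternatively, one can apply the divergence theorem to the $W^{1,1}$ vector field $f w_r^\alpha x$, whose trace vanishes on $\T^d\times\partial B_r$.
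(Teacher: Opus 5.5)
Your proposal is correct and follows the paper's proof essentially step for step. You differentiate to get $F'(r) = 2\alpha r\int_{\mathbb{K}_r} f\, w_r^{\alpha-1}$, split $r^2 = w_r + \abs{x}^2$, rewrite $2\alpha\abs{x}^2 w_r^{\alpha-1} = -x\cdot\nabla(w_r^\alpha)$, and integrate by parts with no boundary term because $w_r^\alpha$ vanishes on $\T^d\times\partial B_r$. The dominated-convergence and density arguments you add are sound and make explicit justifications that the paper leaves implicit.
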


\begin{proof}
	Using that $w_r^\alpha$ vanishes on $\T^d \times \partial B_r$, we compute
	\begin{align*}
		F'(r) & = 2\alpha r \int_{\mathbb{K}_r} f w_r^{\alpha -1} \\
		& = \dfrac{2\alpha}{r} \int_{\mathbb{K}_r} f w_r^\alpha + \dfrac{2\alpha}{r} \int_{\mathbb{K}_r} f \abs{x}^2  w_r^{\alpha-1}   \\
		& = \dfrac{2\alpha}{r} \int_{\mathbb{K}_r} f w_r^\alpha + \dfrac{-1}{r} \int_{\mathbb{K}_r} f \, x \cdot \nabla(  w_r^{\alpha} )   \\
		& = \dfrac{2\alpha + m}{r} \int_{\mathbb{K}_r} f w_r^\alpha + \dfrac{1}{r} \int_{\mathbb{K}_r} (\nabla f \cdot x) w_r^\alpha.
	\end{align*}
	This proves both equalities, as required.
\end{proof}

Next, we introduce our frequency functions and establish almost-monotonicity results.
For $\alpha \ge 1$, a parameter to be determined, let
\begin{align}
	\label{eqn:H_def}
    H(r) & \coloneqq \int_{\mathbb{K}_r} \mu(\te, x) \abs{u(\te, x)}^2 w_r^{\alpha -1 }(x) \, d\te dx, \\
    \label{eqn:D_def}
    D(r) & \coloneqq \int_{\mathbb{K}_r}  A(\te, x) \nabla u(\te, x)   \cdot   \nabla u(\te, x) \,  w_r^\alpha(x) \, d\te dx, \\
	\label{eqn:N_def}
	N(r) & \coloneqq  \frac{D(r)}{H(r)}.
\end{align} 
It will also be useful to define the quantities
\begin{align}
	L(r) & \coloneqq  \int_{\mathbb{K}_r} u(\te, x) \divv( A(\te, x) \nabla u(\te, x)) w_r^\alpha(x) \, d\te dx, 
    \label{eqn:L_def} \\
	I(r) & \coloneqq 2 \alpha \int_{\mathbb{K}_r} u(\te, x) \brac{A(\te, x) \nabla u(\te, x) \cdot x} w_r^{\alpha - 1}(x) \, d\te dx
    \label{eqn:I_def}
\end{align}
since an integration by parts shows that
\begin{align}
\label{eqn:I_id}
    I(r)
    & = \int_{\mathbb{K}_r} \brac{\pr{A \nabla u \cdot \nabla u }  + u \divv(A \nabla u)} w_r^\alpha 
	= D(r) + L(r).
\end{align}
We now state the derivative bounds that lead to almost-monotonicity results. 

\begin{prop}[Derivative bounds]
\label{prop:monotone_cyl}
    For some $R > 0$, let $A \in \mathcal{M}(\mathbb{K}_R; d, m, \La)$ and assume that $A^{(2)}(\te_0,0) = I$ for some $\te_0 \in \T^d$.
    If $u \in H^2(\mathbb{K}_R)$ and $\al \ge 1$, then for every $r \in (0, R)$,
	\begin{align}
    \label{eqn:logHDeriv}
		\dfrac{H'(r)}{H(r)} 
        & = \dfrac{2  (\alpha - 1) + m }{r} 
        + \dfrac{N(r)}{\alpha r } 
        + \dfrac{L(r)}{\alpha r H(r)} 
        + e_1(r) \\
	\label{eqn:NDeriv}
		N'(r) 
        & \ge \dfrac{-1}{4 \alpha r H(r)} \int_{\mathbb{K}_r}  \abs{\divv(A \nabla u)}^2  \mu^{-1} w_r^{\alpha +1} 
        - c_1  \left( \frac{r+1}{r}  \right)\eta(r) N(r),
    \end{align}
	where $\abs{e_1(r)} \le c_1  \left( \tfrac{r+1}{r}  \right)  \eta(r)$ for some $c_1(d, m, \La) > 0$.
\end{prop}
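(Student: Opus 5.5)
The plan is to differentiate $H$ and $D$ directly, trading the $r$-derivative of the weight for $x$-derivatives, and then to integrate by parts against the vector fields $Ax$ and $Z$. Throughout, the coefficient errors are controlled by Lemma \ref{lem:conf_est}. I treat $u$ as real-valued; the complex case is identical after taking real parts, with $u^2$ replaced by $\abs{u}^2$. I assume $\alpha>1$ so that every boundary term on $\T^d\times\partial B_r$ vanishes, and recover $\alpha=1$ either by a limiting argument or by keeping the boundary integral.

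\emph{Step 1: the formula \eqref{eqn:logHDeriv}.} Differentiating gives $rH'(r)=2(\alpha-1)r^2\int\mu u^2 w_r^{\alpha-2}$. I write $r^2=w_r+\abs{x}^2$ and use $\mu\abs{x}^2=Ax\cdot x$ together with $-2(\alpha-1)x\,w_r^{\alpha-2}=\nabla w_r^{\alpha-1}$. This gives
\[
rH'=2(\alpha-1)H-\int u^2\, Ax\cdot\nabla w_r^{\alpha-1}
=2(\alpha-1)H+\int \big[2u\,A\nabla u\cdot x+u^2\divv(Ax)\big]w_r^{\alpha-1}.
\]
The first term in the last integral is $I(r)/\alpha$. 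For the second I use \eqref{eqn:conf1}, $\divv(Ax)=m\mu+O((r+1)\eta(r))$, and the bound $\int u^2w_r^{\alpha-1}\le\La H$. Finally \eqref{eqn:I_id} gives $I=D+L$. Dividing by $rH$ yields \eqref{eqn:logHDeriv} with $\abs{e_1}\le c_1\frac{r+1}{r}\eta(r)$.

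\emph{Step 2: a Rellich identity for $D'$.} Using Lemma \ref{lem:deriv} with $f=A\nabla u\cdot\nabla u$, I get $D'(r)=\frac{2\alpha}{r}D+\frac{2\alpha}{r}\int(A\nabla u\cdot\nabla u)\abs{x}^2w_r^{\alpha-1}$. I then use $Z\cdot x=\abs{x}^2$, so the last integral equals $-\frac1r\int (A\nabla u\cdot\nabla u)\,Z\cdot\nabla w_r^\alpha$. I integrate this by parts through the Rellich identity
\[
\divv\big((A\nabla u\cdot\nabla u)Z\big)-2\divv\big((Z\cdot\nabla u)A\nabla u\big),
\]
whose expansion involves $\divv Z$, the matrix $DZ$, and $Z\cdot\nabla A$. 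By \eqref{eqn:conf2}, $DZ$ equals the block identity up to $O((r+1)\eta)$. The block structure \eqref{eqn:A_struct} makes the main term $(m-2)A\nabla u\cdot\nabla u$, and every other contribution is $O((r+1)\eta)\abs{\nabla u}^2$, which is bounded by $C(r+1)\eta$ times the integrand of $D$. On the other side I use $A\nabla u\cdot\nabla w_r^\alpha=-2\alpha w_r^{\alpha-1}\mu\,(Z\cdot\nabla u)$, which holds since $Ax=\mu Z$. The outcome I expect is
\[
rD'=(2\alpha+m-2)D+4\alpha\int\mu(Z\cdot\nabla u)^2w_r^{\alpha-1}+2\int(Z\cdot\nabla u)\divv(A\nabla u)\,w_r^\alpha+O((r+1)\eta D).
\]

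\emph{Step 3: combine, the main obstacle.} I write $rN'=\frac{rD'}{H}-N\frac{rH'}{H}$ and insert Steps 1 and 2. The dimensional constants cancel, since $(2\alpha+m-2)-(2(\alpha-1)+m)=0$. The error terms contribute $-c_1\frac{r+1}{r}\eta N$. What remains is
\[
\frac{1}{H}\Big[4\alpha P+2Q-\frac{D(D+L)}{\alpha H}\Big],
\]
where $P=\int\mu(Z\cdot\nabla u)^2w_r^{\alpha-1}$ and $Q=\int(Z\cdot\nabla u)\divv(A\nabla u)w_r^\alpha$.

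The delicate step is showing this bracket is at least $-\frac1{4\alpha}\int\abs{\divv(A\nabla u)}^2\mu^{-1}w_r^{\alpha+1}$. My approach is to write $D=I-L$ with $I=2\alpha\int u\,\mu(Z\cdot\nabla u)w_r^{\alpha-1}$, so that $D(D+L)=I^2-IL$. Cauchy--Schwarz then gives $I^2\le4\alpha^2HP$, which absorbs $I^2/(\alpha H)$ into $4\alpha P$. The cross terms $2Q$ and $IL/(\alpha H)$ have to be handled by completing a square in the vector $\mu^{1/2}(Z\cdot\nabla u)w_r^{(\alpha-1)/2}$, set against $u$ and $\mu^{-1/2}\divv(A\nabla u)w_r^{(\alpha+1)/2}$, using $w_r^\alpha=w_r^{(\alpha-1)/2}w_r^{(\alpha+1)/2}$. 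I expect the bookkeeping of these cross terms, and getting exactly the constant $\frac{1}{4\alpha}$, to be the main difficulty. If the direct square does not close, my fallback is to apply Cauchy--Schwarz/Young to the combined quantity $2\alpha\mu(Z\cdot\nabla u)w_r^{\alpha-1}-\tfrac{I}{H}\mu u\,w_r^{\alpha-1}$ paired with $\divv(A\nabla u)w_r$. This yields \eqref{eqn:NDeriv}.
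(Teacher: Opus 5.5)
Your outline is the paper's own. Step 1 is exactly its computation of $H'$. Your Rellich combination packages the paper's separate integrations by parts in $\te$ and $x$ against $Z$. Step 3 is the same square-completion plus Cauchy--Schwarz.

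\textbf{Sign error in Step 2.} The outcome you expect from Step 2 has a sign error, and it breaks Step 3 as written. Since $\divv\big((Z\cdot\nabla u)A\nabla u\big)=(Z\cdot\nabla u)\divv(A\nabla u)+A\nabla u\cdot\nabla(Z\cdot\nabla u)$, your combination contains $-2(Z\cdot\nabla u)\divv(A\nabla u)$. For $d=0$, $A=I$ this is the classical $(m-2)\abs{\nabla u}^2-2(x\cdot\nabla u)\Delta u$. So $Q$ enters $rD'$ as $-2Q$, matching the paper's term $-\frac2r\int\divv(A\nabla u)(A\nabla u\cdot x)\mu^{-1}w_r^\al$.

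With $+2Q$ the algebra cannot close. Set $v=\mu^{1/2}(Z\cdot\nabla u)w_r^{(\al-1)/2}$, $e=\mu^{1/2}u\,w_r^{(\al-1)/2}$ and $h=\mu^{-1/2}\divv(A\nabla u)w_r^{(\al+1)/2}$, with inner products in $L^2(\mathbb{K}_r)$. Then $P=\|v\|^2$, $H=\|e\|^2$, $I=2\al\langle e,v\rangle$, $Q=\langle v,h\rangle$ and $L=\langle e,h\rangle$. Split $v=v_\parallel+v_\perp$ along $e$. Your bracket becomes $4\al\|v_\perp\|^2+2\langle v_\perp,h\rangle+4\langle v_\parallel,h\rangle$, and nothing absorbs the last term. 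With the correct sign,
\[
4\al P-2Q-\frac{I^2-IL}{\al H}=4\al\|v_\perp\|^2-2\langle v_\perp,h\rangle\ \ge\ -\frac{1}{4\al}\|h\|^2,
\]
which is \eqref{eqn:NDeriv}.

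This is precisely your fallback: your combined quantity paired with $\mu^{-1}\divv(A\nabla u)w_r$ is $\langle 2\al v_\perp,h\rangle$. You must keep $4\al P-I^2/(\al H)=4\al\|v_\perp\|^2$ rather than discard it after Cauchy--Schwarz, as your first approach suggests, because it is exactly what absorbs the cross term.

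The paper does the same in the other order. It completes the square $4\al P-2Q=4\al\int\big(A\nabla u\cdot x-\frac{\divv(A\nabla u)w_r}{4\al}\big)^2\mu^{-1}w_r^{\al-1}-\frac{1}{4\al}\|h\|^2$ inside $D'$. It then sets $J=\frac{I+D}{2}$, so that $DI=J^2-\frac{L^2}{4}$, applies Cauchy--Schwarz to $J$, and discards $\frac{L^2}{4\al H}\ge 0$.

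\textbf{Main term for $d\ge 1$.} For $d\ge 1$ the main term is not $(m-2)A\nabla u\cdot\nabla u$. By \eqref{eqn:conf2}, $\partial_i Z_k$ is close to $\delta_{ik}$ for $x$-indices and small otherwise. So the first-order remainder $-2\sum_{i,k}(A\nabla u)_i\,(\partial_iZ_k)\,\partial_k u$ gives $-2A^{(2)}\nabla_x u\cdot\nabla_x u$, and the main term is $m\,A\nabla u\cdot\nabla u-2A^{(2)}\nabla_xu\cdot\nabla_xu$.

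You then need $A^{(2)}\nabla_xu\cdot\nabla_xu\le A\nabla u\cdot\nabla u$, which follows from \eqref{eqn:A_struct} and positivity of $A^{(1)}$. Step 2 therefore gives $rD'\ge(2\al+m-2)D+4\al P-2Q-C(r+1)\eta(r)D$, an inequality rather than an identity. This suffices, since $D'$ enters $N'$ with the positive factor $1/H$, and it is exactly how the paper argues. Equality holds only when $d=0$.
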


\begin{rmk}
In general, one might worry about the error terms involving the factor $\disp \tfrac{r+1}{r}$ for $r$ small, since this factor is non-integrable at the origin. 
However, since we prove unique continuation results at infinity, we only ever consider $r \ge \Lambda^{-1/2}$, say. 
A close inspection of the proof (and the proof of Lemma \ref{lem:conf_est}) shows that one can replace the term $\disp \tfrac{r+1}{r}$ by $1$ if either $A^{(2)}(\te, x) = a^{(2)}(\te, x) I$ is a scalar multiple of the $m \times m$ identity matrix or $d=0$.
\end{rmk}

\begin{proof}
	First, we compute the derivative of $H(r)$, see \eqref{eqn:H_def}. 
    The first equality of Lemma \ref{lem:deriv}, the definition of $\mu$, and an integration by parts give
	\begin{equation*}
    	\begin{split}
			H'(r) - \dfrac{2(\alpha-1)}{r} H(r) 
            & = \dfrac{2(\alpha -1)}{r} \int_{\mathbb{K}_r} \mu \abs{u}^2 \abs{x}^2  w_r^{\alpha -2} \\
			& = \dfrac{1}{r} \int_{\mathbb{K}_r} \abs{u}^2 ( A x \cdot  2(\alpha -1)x ) w_r^{\alpha -2} \\
			& = \dfrac{-1}{r} \int_{\mathbb{K}_r} \abs{u}^2 (Ax \cdot \nabla w_r^{\alpha-1} ) \\
			& = \dfrac{1}{r} \int_{\mathbb{K}_r} \divv( \abs{u}^2 A x ) w_r^{\alpha -1 } \\
			&  = \dfrac{1}{r} \int_{\mathbb{K}_r} \abs{u}^2 \divv(A x) w_r^{\alpha -1}
			+ \dfrac{2}{r} \int_{\mathbb{K}_r} u A \nabla u \cdot x \,  w_r^{\alpha -1} .
		\end{split}
	\end{equation*}
    That is,
    \begin{equation}
        \label{eqn:H'}
	\begin{aligned}
	    H'(r)
        &= \dfrac{2(\alpha-1) + m}{r} H(r) 
        + \dfrac{1}{r} \int_{\mathbb{K}_r} \abs{u}^2 \brac{\divv(A x) - m \mu} w_r^{\alpha -1}
            \\ 
            &\quad + \dfrac{2}{r} \int_{\mathbb{K}_r}   u \nabla u \cdot A x \, w_r^{\alpha -1} 
            \\
			&  = H(r)  \brac{\dfrac{2(\alpha-1) + m}{r}  + O \left ( \left(  \frac{r+1}{r} \right) \eta(r)  \right ) } 
            + \dfrac{D(r) + L(r)}{\alpha r},
	\end{aligned}
    \end{equation}
    where we have used \eqref{eqn:conf1} in Lemma \ref{lem:conf_est}, \eqref{eqn:I_def}, and \eqref{eqn:I_id}.
    Referring to \eqref{eqn:N_def}, the expression in \eqref{eqn:logHDeriv} has been shown.
    
	We now consider the derivative of $D(r)$, see \eqref{eqn:D_def}.
    Appealing to Lemma \ref{lem:deriv} again, and using that $\abs{x}^2 = Z(\te, x) \cdot x$, we get
    \begin{equation}
    \label{eqn:D'comp}
        \begin{aligned}
        D'(r) - \frac{2\alpha }{r} D(r) 
        & = \frac{2 \alpha}{r} \int_{\mathbb{K}_r} (A \nabla u \cdot \nabla u) Z \cdot x  \,  w_r^{\alpha -1} \\
    	& = \frac{-1}{r} \int_{\mathbb{K}_r}  (A \nabla u \cdot \nabla u) Z \cdot \nabla(w_r^\alpha ) \\
    	& = \frac{1}{r} \int_{\mathbb{K}_r} \divv \left(  A \nabla u \cdot \nabla u \, Z   \right) w_r^\alpha \\
        &= \frac 1 r \int_{\mathbb{K}_r} \brac{ \sum_{k=1}^m  (A \nabla u \cdot \nabla u)_{x_k} Z_k + A \nabla u \cdot \nabla u \divv(Z) } w_r^\alpha.    
        \end{aligned}
    \end{equation}
    Since
    $$A \gr u \cdot \gr u = \sum_{i, j = 1}^d a^{(1)}_{ij} u_{\te_i} u_{\te_j} +  \sum_{i, j = 1}^{m}  a^{(2)}_{ij} u_{x_i} u_{x_j},$$
    then the symmetry of $A$ shows that in $\mathbb{K}_r$,
    $$\pr{A \gr u \cdot \gr u}_{x_k} = 
    2 \sum_{i, j = 1}^d a^{(1)}_{ij} u_{\te_i} u_{\te_j x_k} 
    + 2 \sum_{i,j=1}^m  a^{(2)}_{ij} u_{x_i} u_{x_j x_k} 
    + O(\eta(r)) \abs{\nabla u}^2.$$
    An integration by parts shows that
    \begin{align*}
        &\int_{\mathbb{K}_r} \sum_{k=1}^m  \brac{\sum_{i, j = 1}^d a^{(1)}_{ij} u_{\te_i} u_{\te_j x_k}} Z_k w_r^\alpha \\
        &\quad = - \int_{\mathbb{K}_r} \sum_{k=1}^m u_{x_k} Z_k \sum_{i, j = 1}^d \del_{\te_j} \pr{a^{(1)}_{ij} u_{\te_i}} w_r^\alpha 
        - \int_{\mathbb{K}_r} \sum_{k=1}^m u_{x_k} \sum_{i, j = 1}^d a^{(1)}_{ij} u_{\te_i} \del_{\te_j}{Z_k } w_r^\alpha
    \end{align*}
    and
    \begin{align*}
        &\int_{\mathbb{K}_r} \sum_{k=1}^m  \brac{\sum_{i,j=1}^m  a^{(2)}_{ij} u_{x_i} u_{x_j x_k} } Z_k w_r^\alpha \\
        &\quad = -\int_{\mathbb{K}_r} \sum_{k=1}^m u_{x_k} Z_k \sum_{i,j=1}^m  \del_{x_j} \pr{a^{(2)}_{ij} u_{x_i} }   w_r^\alpha
        -\int_{\mathbb{K}_r} \sum_{k=1}^m u_{x_k} \sum_{i,j=1}^m a^{(2)}_{ij} u_{x_i} \pr{\del_{x_j} Z_k - \de_{jk} } w_r^\alpha \\
        &\quad-\int_{\mathbb{K}_r} \sum_{i,j=1}^m  a^{(2)}_{ij} u_{x_i} u_{x_j} w_r^\alpha
        + 2 \al \int_{\mathbb{K}_r} \sum_{k=1}^m u_{x_k} Z_k \sum_{i,j=1}^m a^{(2)}_{ij} u_{x_i}  x_j w_r^{\alpha-1}.
    \end{align*}
    Therefore, with an application of \eqref{eqn:conf2} in Lemma \ref{lem:conf_est}, we see that
    \begin{equation*}
        \begin{aligned}
         \int_{\mathbb{K}_r} \brac{ \sum_{k=1}^m  (A \nabla u \cdot \nabla u)_{x_k} Z_k} w_r^\alpha
        &=  - 2\int_{\mathbb{K}_r} {\di\pr{A \gr u}} \pr{A \gr u \cdot x} \mu^{-1} w_r^\alpha \\
        &\quad-2\int_{\mathbb{K}_r} A^{(2)} \gr_x u \cdot \gr_x u \, w_r^\alpha
        + 4 \al \int_{\mathbb{K}_r} \pr{A \gr u \cdot x}^2 \mu^{-1} w_r^{\alpha-1} \\
        &\quad+ O((r+1) \eta(r)) \int_{\mathbb{K}_r} \abs{\gr u}^2 w_r^\alpha,   
        \end{aligned}
    \end{equation*}
    where we have used the definition of $Z$ from \eqref{eqn:vec_field}.
    Another application of \eqref{eqn:conf2} in Lemma \ref{lem:conf_est} shows that
    \begin{equation*}
        \begin{aligned}
        \int_{\mathbb{K}_r} \pr{A \nabla u \cdot \nabla u} \divv(Z) w_r^\alpha
        &= m D(r) + \int_{\mathbb{K}_r} \pr{A \nabla u \cdot \nabla u} \brac{\divv(Z) - m} w_r^\alpha \\
        &= m D(r) + O((r+1) \eta(r))  \int_{\mathbb{K}_r} \abs{\gr u}^2 w_r^\alpha.   
        \end{aligned}
    \end{equation*}
    Using the fact that $A^{(2)} \nabla_x u \cdot \nabla_x u \le A \nabla u \cdot \nabla u$, we substitute the last two equalities into \eqref{eqn:D'comp} and complete the square to get
    \begin{equation}\label{eqn:D'}
    	\begin{split}
    		D'(r)  
            &\ge \frac{2(\alpha-1) +m}{r} D(r)
            + O\pr{\pr{\frac{r+1}r} \eta(r)}  \int_{\mathbb{K}_r} \abs{\gr u}^2 w_r^\alpha \\
             &\quad+ \frac{4 \al}r \int_{\mathbb{K}_r} \pr{A \gr u \cdot x}^2 \mu^{-1} w_r^{\alpha-1}
             - \frac 2 r \int_{\mathbb{K}_r} {\di\pr{A \gr u}}\pr{A \gr u \cdot x} \mu^{-1} w_r^\alpha \\
        	& \ge \dfrac{2(\alpha - 1) + m}{r} D(r)
    		+ \dfrac{4\alpha}{r} \int_{\mathbb{K}_r} \left( A\nabla u \cdot x  - \dfrac{ \divv(A \nabla u) \omega_r}{ 4\alpha  } \right)^2  \mu^{-1} \omega_r^{\alpha -1} \\
    		& \quad - \dfrac{1}{4 \alpha r} \int_{\mathbb{K}_r}  \abs{\divv(A \nabla u)}^2  \mu^{-1} w_r^{\alpha +1} 
    		+ O \left(  \left( \frac{r+1}{r} \right)\eta(r) \right ) D(r).
    	\end{split}
    \end{equation}
   
	Next, we set
	\begin{align*}
		J(r) 
        \coloneqq \dfrac{I(r)+D(r)}{2} 
        = 2\alpha  \int_{\mathbb{K}_r} u \left( A\nabla u \cdot x - \dfrac{\divv(A \nabla u) w_r}{4\alpha} \right) w_r^{\alpha -1},
	\end{align*}
	and notice from the relations of $L$ and $I$ in \eqref{eqn:L_def} and \eqref{eqn:I_def}, respectively, that 
	\begin{align*}
		D(r) & = J(r) - \dfrac{1}{2} L(r), \\
		I(r) & = J(r) + \dfrac{1}{2}L(r).
	\end{align*}
	Putting together our estimates from \eqref{eqn:H'} and \eqref{eqn:D'}, we get
	\begin{align*}
		H(r)^2 N'(r) 
        & = D'(r) H(r) - D(r) H'(r) \\
		& \ge H(r) \brac{ \dfrac{4\alpha}{r} \int_{\mathbb{K}_r} \left( A \nabla u \cdot x  -\dfrac{ \divv(A \nabla u) \omega_r}{ 4\alpha  } \right)^2  \mu^{-1} \omega_r^{\alpha -1} } \\
		& \quad - \dfrac{1}{\alpha r} \left( J(r) - \dfrac{1}{2} L(r) \right) \left( J(r) + \dfrac{1}{2} L(r)  \right) \\
		& \quad - H(r)\dfrac{1}{4 \alpha r} \int_{\mathbb{K}_r}  \abs{\divv(A \nabla u)}^2  \mu^{-1} w_r^{\alpha +1}  + O\left ( \left(\frac{r+1}{r}  \right)   \eta(r) \right ) D(r) H(r) \\
		& = \dfrac{4\alpha}{r} \brac{ \int_{\mathbb{K}_r}  \mu \abs{u}^2 w_r^{\alpha -1} } \brac{ \int_{\mathbb{K}_r} \left( A \nabla u \cdot x  -\dfrac{ \divv(A \nabla u) \omega_r}{ 4\alpha  } \right)^2  \mu^{-1} \omega_r^{\alpha -1} } \\
		& \quad - \dfrac{4 \alpha }{ r} \brac{ \int_{\mathbb{K}_r} u \left( A \nabla u \cdot x - \dfrac{\divv(A \nabla u) w_r}{4\alpha} \right) w_r^{\alpha -1} }^2  
        +  \dfrac{L(r)^2 }{4 \alpha r} \\
		& \quad - H(r) \dfrac{1}{4 \alpha r} \int_{\mathbb{K}_r} \abs{\divv(A \nabla u)}^2  \mu^{-1} w_r^{\alpha +1}  
        + O\left ( \left(\frac{r+1}{r}  \right)   \eta(r) \right ) D(r) H(r)   \\
		& \ge  - H(r) \dfrac{1}{4 \alpha r} \int_{\mathbb{K}_r}  \abs{\divv(A \nabla u)}^2  \mu^{-1} w_r^{\alpha +1} 
        + O\left ( \left(\frac{r+1}{r}  \right)   \eta(r) \right ) D(r) H(r)
	\end{align*}
	by courtesy of Cauchy-Schwarz. 
    Using the expression for $N(r)$ in \eqref{eqn:N_def}, one infers from the inequality above that 
	\begin{align*}
		N'(r) \ge \dfrac{-1}{4 \alpha r H(r)} \int_{\mathbb{K}_r}  \abs{\divv(A \nabla u)}^2  \mu^{-1} w_r^{\alpha +1} - c_1  \left( \frac{r+1}{r} \right)\eta(r) N(r) 
	\end{align*}
	for some $c_1 > 0$, which completes the proof.
\end{proof}

For solutions to \eqref{eqn:soln_cyl}, we obtain the following corollary. 

\begin{cor}[Almost-monotonicity]
\label{cor:monotone_cyl}
    For some $R > \Lambda^{-1/2}$, let $A \in \mathcal{M}(\mathbb{K}_R; d, m, \La)$ and assume that $A^{(2)}(\te_0,0) = I$ for some $\te_0 \in \T^d$.
    Let $V$ and $W$ satisfy the bounds in \eqref{eqn:pot_bounds}. 
    If $u \in H^1(\mathbb{K}_R)$ is a solution of \eqref{eqn:soln_cyl} in $\mathbb{K}_R$, then for each $r \in \pr{\La^{-1/2}, R}$, we have that
    \begin{align*}
		\dfrac{H'(r)}{H(r)} & = \dfrac{2(\alpha -1) +m}{r} + \dfrac{N(r)}{\alpha r} + e_1(r) + e_2(r).
	\end{align*}
    There exists $c_1(d, m, \La) > 0$ so that $\abs{e_1(r)}  \le c_1 \eta(r)$ and for each $\epsilon > 0$, there exists $c_\epsilon(d, m, \La, \eps) > 0$ so that
	\begin{align*}
		\abs{e_2(r)}  \le c_{\epsilon} \dfrac{(K^2 + M)r}{\alpha}
        + \epsilon  \dfrac{ N(r)}{\alpha r} .
	\end{align*}
    Moreover, recalling \eqref{eqn:phi_def}, if we define 
    \begin{equation}
    \label{eqn:tilNDef}
    \tilde{N}(r) \coloneqq \brac{N(r) +\dfrac{c_1 M^2 r^4}{\alpha} }  \exp\brac{c_1 \pr{\phi(r) + \frac{K^2 r^2}{\alpha} }},
    \end{equation}
    then $\tilde{N}(r)$ is monotone non-decreasing in $r$ on $(\Lambda^{-1/2}, R)$.
\end{cor}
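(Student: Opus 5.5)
The plan is to feed the equation \eqref{eqn:soln_cyl} into Proposition \ref{prop:monotone_cyl}. Since $u$ is a solution, $\divv(A\nabla u) = W\cdot\nabla u + Vu$ a.e., and $u\in H^2_{\loc}$ by the elliptic regularity discussed above. On the range $r > \La^{-1/2}$ we have $\frac{r+1}{r}\le 1+\La^{1/2}$, so the factor $\frac{r+1}{r}$ is absorbed into $c_1$. This gives $|e_1(r)|\le c_1\eta(r)$ directly from \eqref{eqn:logHDeriv}.

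For the $H'/H$ identity, set $e_2(r) := L(r)/(\alpha r H(r))$. We bound $L(r)$ using
\[
\abs{u\,\divv(A\nabla u)}\le K\abs{u}\abs{\nabla u}+M\abs{u}^2 .
\]
The potential term is handled by $w_r^\alpha\le r^2 w_r^{\alpha-1}$ and $\mu\simeq_\La 1$, giving a contribution $\lesssim M r^2 H$. For the drift term, Cauchy--Schwarz gives
\[
\int_{\mathbb{K}_r} K|u||\nabla u| w_r^\alpha \le \Big(\int |\nabla u|^2 w_r^\alpha\Big)^{1/2}\Big(K^2\int |u|^2 w_r^\alpha\Big)^{1/2} \lesssim D^{1/2}\,(K^2 r^2 H)^{1/2},
\]
and then Young's inequality yields $\le \epsilon D + c_\epsilon K^2 r^2 H$. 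Dividing by $\alpha r H$ gives
\[
|e_2|\le \epsilon \frac{N}{\alpha r}+c_\epsilon \frac{(K^2+M) r}{\alpha}.
\]

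For monotonicity, we bound the first term in \eqref{eqn:NDeriv} by
\[
\int |\divv(A\nabla u)|^2 \mu^{-1} w_r^{\alpha+1} \lesssim K^2\int|\nabla u|^2 w_r^{\alpha+1} + M^2\int |u|^2 w_r^{\alpha+1}.
\]
Using $w_r\le r^2$, the first piece is $\lesssim K^2 r^2 D$ and the second is $\lesssim M^2 r^4 H$. Hence
\[
N'(r)\ge -c_1\Big(\eta(r)+\frac{K^2 r}{\alpha}\Big)N(r) - c_1\frac{M^2 r^3}{\alpha}.
\]
Set $\Phi(r)=c_1(\phi(r)+K^2r^2/\alpha)$, after enlarging $c_1$ so the constant in $\Phi'$ matches. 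Then the quantity $N + c_1 M^2 r^4/\alpha$ has derivative at least $-\Phi'(r)N + (4-1)c_1M^2r^3/\alpha$, which is at least $-\Phi'(r)\,(N + c_1M^2r^4/\alpha)$ as long as the $M^2$ term is nonnegative. Multiplying by $e^{\Phi}$ shows $\tilde N'\ge 0$.

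The main subtlety is matching constants, since \eqref{eqn:tilNDef} uses one $c_1$ throughout. I would take $c_1$ as the maximum of all constants that appear, using the slack from the factor $4$ in $\frac{d}{dr}r^4$ versus the $r^3$ loss. The other point to check is that $\Phi'(r)\cdot c_1 M^2 r^4/\alpha$ has the correct sign: it is a positive quantity multiplied by $-\Phi'$, so we must show it is dominated. The bound needed is
\[
\Phi'(r)\, \frac{r^4}{\alpha} \le \frac{3 r^3}{\alpha},
\]
which fails for large $\eta$ or large $r$. If it fails, I would instead note that $N\ge 0$ and absorb differently: write $N'\ge -\Phi' N - c_1M^2r^3/\alpha$ and differentiate $(N+c_1M^2r^4/\alpha)e^{\Phi}$. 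The derivative is
\[
e^\Phi\Big[N' + \frac{4c_1M^2r^3}{\alpha} + \Phi'\Big(N+\frac{c_1M^2r^4}{\alpha}\Big)\Big]\ge e^\Phi\Big[\frac{3c_1M^2 r^3}{\alpha}+\Phi'\frac{c_1M^2r^4}{\alpha}\Big]\ge 0 .
\]
This is automatically nonnegative, so there is no obstruction after all.
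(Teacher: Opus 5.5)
Your proposal is correct and follows the paper's proof essentially step for step: you set $e_2 = L/(\alpha r H)$ and bound $L$ by Cauchy--Schwarz and Young, you insert the equation into the $\divv(A\nabla u)$ term of \eqref{eqn:NDeriv} to get $N' \ge -c_1\bigl[(\eta + K^2 r/\alpha)N + M^2 r^3/\alpha\bigr]$, and you then differentiate $\tilde N$. The worry in your last paragraph is unnecessary: because $\Phi' \ge 0$ and $c_1 M^2 r^4/\alpha \ge 0$, your first inequality already gives $\tilde N' \ge 0$, so no bound of the form $\Phi' r^4 \le 3 r^3$ is ever needed.
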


\begin{proof}
    The estimate on $e_1(r)$ follows from Proposition \ref{prop:monotone_cyl} and that $r \ge \La^{- \frac 1 2}$.
    With
    $$e_2(r) := \frac{L(r)}{ \al r H(r)},$$
    since
    \begin{align*}
        \abs{L(r)}
        &\le \abs{\int_{\mathbb{K}_r} u \pr{W \cdot \gr u + V u} w_r^\alpha}
        \le \La^{-1} \int_{\mathbb{K}_r} \brac{ \pr{c_\eps \abs{W}^2 + \abs{V}} \abs{u}^2 + \eps \abs{\gr u}^2 } w_r^\alpha \\
        &\le c_\eps \pr{K^2 + M} r^2 H(r) + \eps D(r),
    \end{align*}
    then the bound on $e_2$ follows. 
   
	Using the equation for $u$ in \eqref{eqn:soln_cyl} and the derivative bound described by \eqref{eqn:NDeriv} in Proposition \ref{prop:monotone_cyl}, we see that for $r \in \pr{\La^{-1/2}, R}$,
	\begin{align*}
		N'(r) \ge -c_1 \brac{ \pr{\eta(r) + \dfrac{K^2 r}{\alpha}} N(r) +  \dfrac{M^2r^{3}}{\alpha } }
	\end{align*}
	for some $c_1(d, m, \La) > 0$. 
    Differentiating the expression in \eqref{eqn:tilNDef}, using \eqref{eqn:phi_def}, and comparing with the above bound shows that $\tilde{N}(r)$ is monotone non-decreasing.
\end{proof}

\subsection{Three-cylinder inequalities}

By using the almost-monotonicity results above, and choosing $\al \gg 1$, we prove the following three-cylinder inequality. 

\begin{prop}[Three-cylinder inequality for solutions]
\label{prop:3cylinder}
    For some $R > \Lambda^{-1/2}$, let $A \in \mathcal{M}(\mathbb{K}_R; d, m, \La)$ and assume that $A^{(2)}(\te_0,0) = I$ for some $\te_0 \in \T^d$.
    Let $V$ and $W$ satisfy the bounds in \eqref{eqn:pot_bounds}. 
    If $u \in H^1(\mathbb{K}_R)$ is a solution of \eqref{eqn:soln_cyl} in $\mathbb{K}_R$, then for $\Lambda^{-1/2} \le  r_1 < r_2 < \sigma r_2 < r_3 < R$, it holds that
	\begin{equation}
    \label{eqn:3_rect}
	   \begin{aligned}
	   \norm{u}_{r_2}
        &\le \exp \set{ \brac{ c_2 + \log\pr{\frac{\sigma^2}{\sigma^2-1}} } F(R)
        + c_2 \phi(R) } 
        \norm{u}_{r_1}^\kappa 
        \norm{u}_{r_3}^{1-\kappa},  
	    \end{aligned}
	\end{equation}
    where $\norm{u}_r := \norm{u}_{L^2 (\mathbb{K}_{r})}$, $c_2(d, m, \La) > 0$, 
    \begin{equation}
    \label{eqn:bigF_defn}
        F(R) \coloneqq 1 + \pr{KR}^2 + \pr{MR^2}^{\frac 2 3},
    \end{equation}
    $\phi$ is defined in \eqref{eqn:phi_def}, and with $c_1(d, m, \La) > 0$, $\kappa \in (0,1)$ is the exponent defined by 
	\begin{align}
    \label{eqn:kappa_def}
		\kappa \coloneqq 
		\dfrac{\log\pr{\frac{r_3}{\sigma r_2}}}{ e^{c_1 \pr{\phi(R) + 2}}  \log\pr{\frac{\sigma r_2}{r_1}}  + \log\pr{\frac{r_3}{\sigma r_2}}}.
	\end{align}
\end{prop}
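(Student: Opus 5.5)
We will integrate the logarithmic derivative of $H$ from Corollary \ref{cor:monotone_cyl}, bound $N$ from both sides by the monotone quantity $\tilde N$, and then pass from the weighted quantity $H$ to the $L^2$ norms $\norm{u}_r$. The parameter $\alpha$ will be chosen of size $\alpha \simeq F(R)$, so that the terms $K^2r^2/\alpha$ and $M^2 r^4/\alpha$ inside $\tilde N$ are $O(1)$ on $(0,R)$. Note that $M^2R^4/\alpha \lesssim (MR^2)^{2/3}$ is arranged by the $(MR^2)^{2/3}$ term in $F$: $\alpha \ge (MR^2)^{2/3}$ gives $M^2R^4/\alpha \le \alpha^2$. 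Since that is not $O(1)$, we treat it as an additive $O(\alpha)$ contribution to the frequency term, which costs only $O(\alpha)$ per unit of $\log r$.

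\emph{Step 1.} Fix $\epsilon = 1/2$ in Corollary \ref{cor:monotone_cyl}. Then
\[
\frac{d}{dr}\log H(r) = \frac{2(\alpha-1)+m}{r} + \frac{(1+O(\epsilon))N(r)}{\alpha r} + O(\eta(r)) + O\!\left(\frac{(K^2+M)r}{\alpha}\right).
\]
Integrating the error terms over $[s,t]$ gives at most $c(\phi(R) + (K^2+M)R^2/\alpha) \le c(\phi(R)+1)$. Write $\beta(r) := \tfrac{1}{\alpha}\tilde N(r)$. From \eqref{eqn:tilNDef} we have $N \le \tilde N \le e^{c_1(\phi(R)+2)}\,(N + c_1 M^2 r^4/\alpha)$. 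Monotonicity of $\tilde N$ then gives two bounds:
\begin{itemize}
\item on $[r_1,\sigma r_2]$, an upper bound $\log \tfrac{H(\sigma r_2)}{H(r_1)} \le (2\alpha+m)\log\tfrac{\sigma r_2}{r_1} + \tfrac32\beta(\sigma r_2)\log\tfrac{\sigma r_2}{r_1} + \text{err}$;
\item on $[\sigma r_2, r_3]$, a lower bound $\log\tfrac{H(r_3)}{H(\sigma r_2)} \ge (2\alpha -2+m)\log\tfrac{r_3}{\sigma r_2} + \tfrac12 e^{-c_1(\phi(R)+2)}\beta(\sigma r_2)\log\tfrac{r_3}{\sigma r_2} - \text{err}$.
\end{itemize}
In the lower bound we subtract the $c_1M^2r^4/\alpha^2 \lesssim \alpha$ piece, and this is absorbed into the $O(\alpha)\log$ terms.

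\emph{Step 2.} Eliminate $\beta(\sigma r_2)$ between the two bounds. This is the standard convexity trick. Set $A_1 = \log\tfrac{\sigma r_2}{r_1}$ and $A_2 = \log\tfrac{r_3}{\sigma r_2}$, and multiply the first bound by $A_2$ and the second by $e^{c_1(\phi+2)}A_1$. We obtain
\[
H(\sigma r_2) \le e^{E}\, H(r_1)^{\kappa} H(r_3)^{1-\kappa},
\]
with $\kappa$ exactly as in \eqref{eqn:kappa_def}. The polynomial $r^{2\alpha}$ factors do not cancel because $\kappa \ne A_2/(A_1+A_2)$. We handle this by normalizing $\tilde H(r) := H(r)/r^{2\alpha+m-2}$ before eliminating, so that the leftover is $O(\alpha)\cdot(\text{logs})$. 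Using $\log(r_3/r_1) \lesssim 1$ after scaling — or more precisely, absorbing it into the $c_2F(R)$ term since these are weighted by $\kappa, 1-\kappa$ — the total error is $E \le c_2(F(R)+\phi(R))$.

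\emph{Step 3.} Compare $H$ with $L^2$ norms. Trivially $H(r) \le \Lambda r^{2(\alpha-1)}\norm{u}_r^2$. In the other direction, on $\mathbb{K}_{r_2}$ the weight satisfies $w_{\sigma r_2}\ge r_2^2(\sigma^2-1)$, so
\[
H(\sigma r_2) \ge \Lambda^{-1}\big((\sigma^2-1) r_2^2\big)^{\alpha-1}\norm{u}_{r_2}^2 .
\]
Taking square roots, the ratio of the powers of $r$ on the two sides, compared against $(\sigma r_2)^{2(\alpha-1)}$, produces a factor $(\sigma^2/(\sigma^2-1))^{(\alpha-1)/2}$, i.e. $\exp\{\log\tfrac{\sigma^2}{\sigma^2-1}\,F(R)\}$. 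This accounts for the middle term in \eqref{eqn:3_rect}.

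\emph{Main obstacle.} The hardest part is bookkeeping the $r^{2\alpha}$ powers so that the $r_1, r_3$ contributions combine with exponents $\kappa$ and $1-\kappa$ without leaving an uncontrolled factor like $(r_3/r_1)^{\alpha}$. I would first try normalizing $H$ by $r^{2\alpha+m-2}$ before eliminating $\beta$, and then check that the residual factor is $\le e^{cF(R)}$.
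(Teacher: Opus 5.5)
\textbf{Verdict: genuine gap.} Your outline matches the paper's: normalize $H$, integrate the two-sided bounds from Corollary \ref{cor:monotone_cyl}, use the monotonicity of $\tilde N$ to eliminate $\tilde N(\sigma r_2)$, take $\alpha = F(R)$, and compare with $L^2$ norms. But the step you yourself flag as the main obstacle is never resolved, and the fixes you offer are invalid.

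\textbf{The invalid fixes.} The bound $\log(r_3/r_1)\lesssim 1$ is false: the ratio is scale invariant and can be of size $\log(\Lambda^{1/2}R)$. A factor $e^{c\alpha\log(r_3/r_1)}$ also cannot be absorbed into $e^{c_2F(R)}$ once $\alpha=F(R)$.

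\textbf{What actually happens.} Set $h(r):=r^{-[2(\alpha-1)+m]}H(r)$. The term $\frac{2(\alpha-1)+m}{r}$ then cancels identically, so the elimination gives $h(\sigma r_2)\le e^{c(\alpha+\phi(R))}h(r_1)^\kappa h(r_3)^{1-\kappa}$ with no polynomial leftover. The conversions are
\[
h(r)\le\Lambda r^{-m}\norm{u}_r^2,
\qquad
\norm{u}_{r_2}^2\le\Lambda(\sigma r_2)^m\bigl(\tfrac{\sigma^2}{\sigma^2-1}\bigr)^{\alpha-1}h(\sigma r_2),
\]
and each cancels the $r^{2(\alpha-1)}$ internally. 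The only residual is $Q^m$, where $Q:=\sigma r_2/(r_1^{\kappa}r_3^{1-\kappa})$.

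\textbf{The missing idea: $Q\le 1$.} With your $A_1,A_2$ and $E_0:=e^{c_1(\phi(R)+2)}\ge 1$,
\[
\log Q=\kappa A_1-(1-\kappa)A_2=\frac{(1-E_0)A_1A_2}{A_2+E_0A_1}\le 0 .
\]
So the leftover has a favorable sign, not a small size. What matters is $\kappa\le A_2/(A_1+A_2)$, not merely $\kappa\ne A_2/(A_1+A_2)$.

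Your Step 2, written for the unnormalized $H$, silently discards the factor $Q^{2(\alpha-1)+m}\le 1$. Combining that with $H(r_i)\le\Lambda r_i^{2(\alpha-1)}\norm{u}_{r_i}^2$ in Step 3 leaves $Q^{-2(\alpha-1)}\ge 1$. That is exactly the uncontrolled $(r_3/r_1)^{\alpha}$-type factor you feared. Replacing $(r_1^\kappa r_3^{1-\kappa})^{2(\alpha-1)}$ by $(\sigma r_2)^{2(\alpha-1)}$, as Step 3 implicitly does, goes the wrong way.

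\textbf{The potential term has the same flaw.} You bound $c_1M^2r^4/\alpha^2\lesssim\alpha$ pointwise and pay it against $dr/r$. That produces an error $\alpha\log(r_3/(\sigma r_2))$, which again cannot be absorbed. The contribution of the $M^2$ term of $\tilde N$ to $\frac{d}{dr}\log h$ is at most $c_1M^2r^3/\alpha^2$, and it should be integrated in $dr$. Since $\alpha\ge (MR^2)^{2/3}$,
\[
\int_0^R c_1M^2r^3\alpha^{-2}\,dr\le \frac{c_1(MR^2)^2}{4\alpha^2}\le \frac{c_1\alpha}{4}.
\]
Likewise $(K^2+M)R^2/\alpha\le 2\alpha$ rather than $\lesssim 1$; this one is harmless.

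\textbf{Constant bookkeeping in $\kappa$.} With $\epsilon=1/2$, the elimination yields $3e^{c_1(\phi(R)+2)}$ in place of $e^{c_1(\phi(R)+2)}$. To obtain \eqref{eqn:kappa_def} as stated, use $K^2r^2/\alpha\le 1$ to get $e^{-c_1(\phi(R)+1)}$ in the lower bound. Then choose $\epsilon$ with $(1+\epsilon)\le e^{c_1}(1-\epsilon)$, as the paper does.
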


\begin{proof}
	We define $h(r) \coloneqq r^{-[2(\alpha -1) + m]} H(r)$, so that by Corollary \ref{cor:monotone_cyl}, for any $\epsilon \in (0,1)$, there is $c_\epsilon \ge c_1 >0$ so that 
	\begin{align*}
		\dfrac{d}{dr} \brac{\log(h(r))} 
        &  \le  \dfrac{ (1 + \epsilon) N(r) }{\alpha r} 
        + c_\epsilon \dfrac{(K^2 + M)r}{\alpha} 
        + c_1 \eta (r) \\
		&  \le  \dfrac{(1 + \epsilon)\tilde{N}(r)}{\alpha r} 
        + c_\eps \brac{\dfrac{(K^2 + M)r}{\alpha} + \eta (r)},
	\end{align*}
    where we recall \eqref{eqn:tilNDef}.
    We can also bound from below to get
	\begin{align*}
		\dfrac{d}{dr} \brac{\log(h(r))} 
        & \ge  \dfrac{(1-\epsilon)N(r)}{\alpha r} 
        - c_\epsilon \dfrac{(K^2 + M)r}{\alpha} 
        - c_1 \eta (r) \\ 
		& \ge   \dfrac{ (1- \epsilon) \tilde{N}(r)e^{-c_1 \pr{\phi(r) + \frac{K^2 r^2}{\alpha} }}}{\alpha r} 
        - c_\eps \brac{ \dfrac{M^2 r^3}{\alpha^2 } + \dfrac{(K^2 + M)r}{\alpha} + \eta (r) }.
	\end{align*}
	Using the monotonicity of $\tilde{N}$, we integrate the upper bound to get
    \begin{align*}
		\log \brac{\dfrac{ h(\sigma r_2) }{ h(r_1) }} 
        & \le \dfrac{1+\epsilon}{\alpha } \int_{r_1}^{\sigma r_2}  \frac{\tilde{N}(r)}{r} dr
        + c_\eps \int_{r_1}^{\sigma r_2} \brac{\dfrac{(K^2 + M)r}{\alpha} + \eta (r)} dr \\
        &\le \dfrac{(1+\epsilon) \tilde N(\si r_2)}{\alpha } I(r_1, \sigma r_2)
        + c_\eps \brac{\dfrac{(K^2 + M)R^2}{2\alpha} + \phi (R)},
	\end{align*}
    where we introduce $\disp I(r_1, r_2) := \int_{r_1}^{r_2}  \frac{dr}{r}$.
    Similarly, integrating the lower bound shows that
	\begin{align*}
		 \log &\brac{\dfrac{ h(r_3) }{ h(\sigma r_2) }} \\
        & \ge \frac{1-\eps}{\al} \int_{\sigma r_2}^{r_3}  \dfrac{ \tilde{N}(r) e^{-c_1 \pr{\phi(r) + \frac{K^2 r^2}{\alpha} }}}{r} dr 
        - c_\eps \int_{\sigma r_2}^{r_3} \brac{\dfrac{M^2 r^3}{\alpha^2 } + \dfrac{(K^2 + M)r}{\alpha} +  \eta (r) } dr \\
		& \ge \dfrac{(1-\epsilon) \tilde{N}(\sigma r_2)  e^{-c_1 \pr{\phi(R) + \frac{K^2 R^2}{\alpha} }} }{\alpha } I(\sigma r_2, r_3) 
        -  c_\eps \brac{\dfrac{M^2 R^4}{4\alpha^2 } + \dfrac{(K^2 + M)R^2}{2\alpha} + \phi (R)}.
	\end{align*}
    With $\al = 1 + \pr{KR}^2 + \pr{MR^2}^{\frac 2 3}$, we see that
    \begin{align*}
        \log \brac{\dfrac{ h(\sigma r_2) }{ h(r_1) }} 
        &\le \dfrac{(1+\epsilon) \tilde N(\si r_2)}{\alpha } I(r_1, \sigma r_2)  
        + c_\eps \pr{ \alpha + \phi (R)} \\
        \log \brac{\dfrac{ h(r_3) }{ h(\sigma r_2) }} 
        & \ge \dfrac{(1-\epsilon) \tilde{N}(\sigma r_2)  e^{-c_1 \pr{\phi(R) + 1}} }{\alpha } I(\sigma r_2, r_3) 
        -  c_\eps \pr{ \al + \phi (R)}.
    \end{align*}
	Combining these inequalities yields
	\begin{align*}
		\dfrac{ \log \brac{\dfrac{ h(\sigma r_2) }{ h(r_1) }}  - c_\eps \pr{ \al + \phi (R)} }{ I(r_1, \sigma r_2) } 
        & \le \frac{1+\epsilon}{1 - \eps} e^{c_1 \pr{\phi(R) + 1 }} \frac{\pr{1 - \eps} \tilde{N}(\sigma r_2)}{\al} e^{-c_1 \pr{\phi(R) + 1 }}  \\
		& \le \frac{1+\epsilon}{1 - \eps} e^{c_1 \pr{\phi(R) + 1 }} \dfrac{ \log \brac{\dfrac{ h(r_3) }{ h(\sigma r_2) }} + c_\eps \pr{ \al + \phi (R)} }{ I(\sigma r_2, r_3) }.
	\end{align*}
	
	At this stage, we fix $\epsilon(d, m, \La) > 0$ small enough so that $(1+\epsilon) \le e^{c_1} (1-\epsilon)$.     
    With $\beta \coloneqq  I(r_1, \sigma r_2)   e^{c_1\pr{ \phi(R) + 2 }}$ and $\gamma \coloneqq  I(\sigma r_2, r_3) $, we simplify this expression to get
	\begin{align*}
		\log(h(\sigma r_2)) 
        \le \dfrac{\gamma}{\gamma + \beta} \log(h(r_1)) 
        + \dfrac{\beta}{\gamma + \beta} \log(h(r_3)) 
        + c_2 \pr{ \al + \phi (R)} ,
	\end{align*}
    where $c_2(d, m, \La) > 0$.
	Taking exponentials in this expression shows that
	\begin{align*}
		h(\sigma r_2) \le \exp \set{ c_2 \brac{ \al + \phi (R)} } h(r_1)^\kappa h(r_3)^{1-\kappa},
	\end{align*}
	where $\disp \kappa = \frac{\gamma}{\gamma + \beta} \in (0,1)$. 
    Computing $I(r_1, \sigma r_2)$ and $I(\sigma r_2, r_3)$ gives the formula for $\kappa$ as in the statement of the proposition.
		
	As a last step, define $\disp \tilde{H}(r) \coloneqq \int_{\mathbb{K}_r} \abs{u}^2$ and notice that $h(r) \le \La r^{-m} \tilde{H}(r)$ and $\tilde{H}(r) \le \La  \rho^m \left( 1 - \frac{r^2}{\rho^2}  \right)^{-\alpha +1 } h(\rho)$ for $0 < r < \rho$. 
    Hence we conclude that   
    \begin{equation}
    \label{eqn:almost3cyl}
        \begin{aligned}
        \tilde{H}(r_2) 
        & \le \La  (\sigma r_2)^{m} \left( \frac{\sigma^{2}}{ \sigma^2-1}  \right)^{\alpha-1} h(\sigma r_2) \\
		& \le \La  (\sigma r_2)^{m} \left( \frac{\sigma^{2}}{ \sigma^2-1}  \right)^{\alpha-1}  \exp \set{ c_2 \brac{ \al + \phi (R)} } h(r_1)^\kappa h(r_3)^{1-\kappa} \\
		& \le \La^{2}  \left (\dfrac{\sigma r_2}{r_1^{\kappa} r_3^{1-\kappa}} \right )^{m} \left( \frac{\sigma^{2}}{ \sigma^2-1}  \right)^{\alpha-1}  \exp \set{ c_2 \brac{ \al + \phi (R)} } \tilde{H}(r_1)^\kappa \tilde{H}(r_3)^{1-\kappa}.
        \end{aligned}
    \end{equation}
	Since
    \begin{align*}
        \log \pr{\dfrac{\sigma r_2}{r_1^{\kappa} r_3^{1-\kappa}}}
        &= \kappa \log\pr{\frac{\sigma r_2}{r_1}} - \pr{1 - \kappa} \log\pr{\frac{r_3}{\si r_2}} \\
        &= \frac{I(\sigma r_2, r_3)I(r_1, \sigma r_2) \pr{1 - e^{c_1\pr{ \phi(R) + 2 }}}}{I(\sigma r_2, r_3) + I(r_1, \sigma r_2)   e^{c_1\pr{ \phi(R) + 2 }}}  
        \le 0,
    \end{align*}
    then
    \begin{align*}
        \tilde{H}(r_2) 
		& \le \exp \set{ c_2 \brac{ \al + \phi(R)} 
        + \al \log \pr{\frac{\si^2}{\si^2 - 1}} 
        + 2 \log \La} 
        \tilde{H}(r_1)^\kappa 
        \tilde{H}(r_3)^{1-\kappa}.
    \end{align*}
    Because we take $\al = F(R)$, see \eqref{eqn:bigF_defn}, then the conclusion described by \eqref{eqn:3_rect} follows after we adjust the definition of $c_2$.
\end{proof}

We next prove an eigenfunction version of the previous result.

\begin{cor}[Three-cylinder inequality for eigenfunctions]
\label{cor:3cylinder}
    For some $R > \Lambda^{-1/2}$, let $A \in \mathcal{M}(\mathbb{K}_R; d, m, \La)$ and assume that $A^{(2)}(\te_0,0) = I$ for some $\te_0 \in \T^d$.
    Let $V$ and $W$ satisfy the bounds in \eqref{eqn:pot_bounds}. 
    If $u \in H^1(\mathbb{K}_R)$ is a solution of 
    \begin{equation*}
    %\label{eqn:eig_eqn}
        -\di\pr{A \gr u} + W \cdot \gr u + V u = \la u
        \,\, \text{ in } \, \mathbb{K}_R,
    \end{equation*}
    then for $\Lambda^{-1/2} \le  r_1 < r_2 < \sigma r_2 < r_3 < R$, where $\si \le \si_\La$, it holds that
	\begin{align}
    \label{eqn:3_rect_eig}
   	\norm{u}_{r_2} 
        \le \exp \set{ \brac{c_2 + \log\pr{\frac \si {\si - 1}} }F(R)  + c_2 G(R)  } 
        \norm{u}_{r_1}^\kappa 
        \norm{u}_{r_3}^{1-\kappa},	
    \end{align}
	where $c_2(d, m, \La) > 0$, $F(R)$ is defined in \eqref{eqn:bigF_defn}, 
    \begin{equation*}
        G(R) \coloneqq \sqrt \la R + \phi(R),
    \end{equation*}
    where $\phi$ is defined in \eqref{eqn:phi_def}, and $\kappa \in (0,1)$ is the exponent defined by \eqref{eqn:kappa_def} with $c_1(d, m+1, \La) > 0$. 
\end{cor}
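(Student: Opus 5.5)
The plan is the standard trick of absorbing the eigenvalue by adding one extra Euclidean variable, and then applying Proposition \ref{prop:3cylinder} in dimension $m+1$; this explains why $\kappa$ is defined with $c_1(d,m+1,\La)$. Write $\la = \mu^2$ with $\mu \in \C$ chosen so that $\abs{\mu} = \sqrt{\abs{\la}}$. Set
\begin{equation*}
v(\te, x, t) \coloneqq u(\te, x)\, e^{\mu t}, \qquad (\te, x, t) \in \T^d \times \R^{m} \times \R.
\end{equation*}
Take the block matrix $\tilde A = \mathrm{diag}(A^{(1)}, A^{(2)}, 1)$, and extend $W$ and $V$ trivially (no $t$-dependence, zero $t$-component of $W$). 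Then $\partial_t^2 v = \la v$, so
\begin{equation*}
-\divv(\tilde A \gr v) + \tilde W\cdot\gr v + V v = 0 \quad \text{in } \T^d \times B_R^{m+1}.
\end{equation*}
The matrix $\tilde A$ belongs to $\mathcal{M}(\mathbb{K}^{m+1}_R; d, m+1, \La)$ and $\tilde A^{(2)}(\te_0,0)=I_{m+1}$. Since $\tilde A$ does not depend on $t$, $\abs{\gr \tilde A} = \abs{\gr A}$, so $\eta$ and $\phi$ are unchanged. The bounds $K$ and $M$ are also unchanged. Thus Proposition \ref{prop:3cylinder} applies to $v$ with the same $F(R)$ and $\phi(R)$ and with constants depending on $(d,m+1,\La)$.

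The second step is to compare $L^2$ norms of $v$ on $(m+1)$-dimensional cylinders with those of $u$. This is where the $\sqrt\la R$ term in $G(R)$ comes from. On $\T^d\times B^{m+1}_r$ we have $\abs{e^{\mu t}} \in [e^{-\abs{\mu} r}, e^{\abs{\mu} r}]$.

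For the upper bound, the $(m+1)$-ball of radius $r_3$ sits inside $B^m_{r_3}\times(-r_3,r_3)$. Hence
\begin{equation*}
\norm{v}_{r_3}\le C r_3^{1/2} e^{\abs{\mu} r_3}\norm{u}_{r_3},
\end{equation*}
and likewise $\norm{v}_{r_1}\le Cr_1^{1/2}e^{\abs{\mu} r_1}\norm{u}_{r_1}$.

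For the lower bound on the left-hand side, I will not use the three radii $r_1<r_2<r_3$ directly. The $(m+1)$-ball of radius $\rho$ contains $B^m_{r_2}\times(-\delta,\delta)$ whenever $\rho^2 \ge r_2^2+\delta^2$. I will take $\rho=\sqrt{\si}\, r_2$ and apply the proposition with intermediate radius $\rho$ and dilation $\sqrt\si$ in place of $\si$. Then $\delta \simeq r_2\sqrt{\si-1}$, which gives
\begin{equation*}
\norm{v}_{\rho}\ge c\,\delta^{1/2}e^{-\abs{\mu}\rho}\norm{u}_{r_2}.
\end{equation*}
The factor $\log(\si/(\si-1))$ in \eqref{eqn:3_rect_eig} appears exactly because the proposition's $\log(\si'^2/(\si'^2-1))$ with $\si'=\sqrt\si$ equals $\log(\si/(\si-1))$. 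The restriction $\si\le\si_\La$ should guarantee that $\sqrt\si\,\rho<r_3$ and that $\delta\gtrsim$ something harmless, such as $\delta \ge \La^{-1/2}\sqrt{\si-1}$.

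The final step is to collect the constants. All exponential factors $e^{\abs{\mu} r_j}$ are at most $e^{\sqrt{\abs\la}R}$ and go into $c_2 G(R)$. The polynomial factors $r_j^{1/2}$, $\delta^{-1/2}$ are bounded by $e^{C\log R}\le e^{CF(R)}$, or else by $(\si/(\si-1))^{1/2}$, and are absorbed into $\brac{c_2+\log(\si/(\si-1))}F(R)$, since $F\ge1$. The exponent $\kappa$ must also match \eqref{eqn:kappa_def}. The proposition gives $\kappa$ with $\log(r_3/\rho\sqrt\si)$ and $\log(\rho\sqrt\si/r_1)$, and $\rho\sqrt\si=\si r_2$, so this is exactly the formula in \eqref{eqn:kappa_def}.

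The main obstacle is this bookkeeping: checking that the slab thickness $\delta$ and the dilation factors fit inside $r_3$ without changing the form of $\kappa$. If the sharp matching fails, the fallback is to accept a slightly different $\si$ and redefine constants depending on $\La$ through $\si_\La$.
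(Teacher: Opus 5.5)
Your setup is the same as the paper's. The lift $v=e^{\mu t}u$ with $\tilde A=\mathrm{diag}(A^{(1)},A^{(2)},1)$ is right. So is taking left radius $\sqrt\si\,r_2$ with dilation $\sqrt\si$, so that the middle radius is $\si r_2$, $\kappa$ is exactly \eqref{eqn:kappa_def}, and $\log(\si'^2/(\si'^2-1))=\log(\si/(\si-1))$. Putting the exponentials into $G(R)$ is also fine. (Note that $\sqrt\si\,\rho=\si r_2<r_3$ is already a hypothesis, so $\si\le\si_\La$ is only needed to bound powers of $\si$.)

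The gap is in your last step. Combining your three norm comparisons with the \emph{final} form of Proposition \ref{prop:3cylinder} leaves, in squared norms, the factor
\begin{equation*}
\frac{(2r_1)^{\kappa}(2r_3)^{1-\kappa}}{2\delta}=\frac{\si}{\sqrt{\si-1}}\cdot\frac{r_1^{\kappa}r_3^{1-\kappa}}{\si r_2},\qquad \delta=r_2\sqrt{\si-1}.
\end{equation*}
The second factor is not harmless. Set $a=\log(\si r_2/r_1)$, $b=\log(r_3/(\si r_2))$ and $E=e^{c_1(\phi(R)+2)}>1$. Then
\begin{equation*}
\log\frac{r_1^{\kappa}r_3^{1-\kappa}}{\si r_2}=\frac{(E-1)ab}{Ea+b}.
\end{equation*}
For fixed $\si$, $r_1=\La^{-1/2}$, $r_2\approx\sqrt R$ and $r_3\approx R/2$, this is about $c\log R$.

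Your claim $e^{C\log R}\le e^{CF(R)}$ is false whenever $KR$ and $MR^2$ are bounded; for example, $K=M=0$ gives $F\equiv1$. The factor also cannot be absorbed into $c_2G(R)$ when $\sqrt{\abs{\la}}\,R$ and $\phi(R)$ are bounded, for instance when $A=I$. So your argument proves \eqref{eqn:3_rect_eig} only up to an extra factor $R^{c}$, with $c_2$ no longer independent of $R$. Your fallback of adjusting $\si$ does not help, because the loss is in $R$, not in $\si$.

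The missing idea is to use the proposition before its last simplification. Apply instead the intermediate bound \eqref{eqn:almost3cyl} in dimension $m+1$, with the left radius $r_2$ replaced by $\sqrt\si\,r_2$. That bound still contains the geometric factor $\big(\si r_2/(r_1^{\kappa}r_3^{1-\kappa})\big)^{m+1}$, which Proposition \ref{prop:3cylinder} throws away by bounding it by $1$. One power of this factor cancels your leftover $r_1^{\kappa}r_3^{1-\kappa}/(\si r_2)$ exactly, and the remaining $m$-th power is at most $1$. What is left, $\si/\sqrt{\si-1}$, is absorbed using $\si\le\si_\La$ and $F\ge1$. This is how the paper closes the argument.
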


\begin{proof}
Given $\te \in \T^d$, $x \in \R^m$, $z \in \R$, we write $(\te, x, z) \in \T^d \times \R^{m+1}$ and $(x, z) \in \R^{m+1}$.
Let $\overline{\mathbb{K}}_R = \T^d \times B_{R}(0, 0) \su \T^d \times \R^{m+1}$, a cylinder in higher dimensions.
On $\overline{\mathbb{K}}_R$, define $u_0(\te, x, z) = e^{\sqrt \la z} u(\te, x)$, $V_0(\te, x, z) = V(\te, x)$, $W_0(\te, x, z) = (W(\te, x), 0)$, and
\begin{align*}
A_0(\te, x, z) &= \begin{pmatrix}
A^{(1)}(\te, x) & 0 & 0 \\
0 & A^{(2)}(\te, x) & 0 \\
0 & 0 & 1
	\end{pmatrix}.
\end{align*}
Since $A \in \mathcal{M}(\mathbb{K}_R; d, m, \La)$, then $A_0 \in \mathcal{M}(\overline{\mathbb{K}}_R; d, m+1, \La)$.
Moreover,
$$\phi_0(R)
    := \int_0^R \norm{\nabla  A_0(\te, x, z)}_{L^\infty(\overline{\mathbb{K}}_s)}\, ds
    = \int_0^R \norm{\nabla  A(\te, x)}_{L^\infty(\mathbb{\mathbb{K}}_s)}\, ds = \phi(R).$$
Observe also that
\begin{align*}
  -\di\pr{A_0 \gr u_0} + W_0 \cdot \gr u_0 + V_0 u_0 = 0
  \iff & -\di\pr{A \gr u} + W \cdot \gr u + V u = \la u.
\end{align*} 
As $\norm{W_0}_\iny \le K$ and $\norm{V_0}_\iny \le M$, then an application of \eqref{eqn:almost3cyl} from the proof of Proposition \ref{prop:3cylinder} (with $r_2$ on the left replaced by $\sqrt \si r_2$) shows that
    \begin{align*}
   	\norm{u_0}_{L^2 (\overline{\mathbb{K}}_{\sqrt \si r_2})}
        &\le \La^{2} \pr{\dfrac{\sigma r_2}{r_1^{\kappa} r_3^{1-\kappa}} }^{m+1} \pr{ \frac{\sigma}{ \sigma-1} }^{\alpha-1} e^{ c_2 \brac{ \al + \phi (R)} } \norm{u_0}_{L^2 (\overline{\mathbb{K}}_{r_1})}^\kappa 
        \norm{u_0}_{L^2 (\overline{\mathbb{K}}_{r_3})}^{1-\kappa},	
    \end{align*}
	where $\al = F(R)$, $c_2(d, m+1, \La) > 0$, and $\kappa$ is given by \eqref{eqn:kappa_def} with $c_1(d, m+1, \La) > 0$. 
	Since 
    \begin{align*}
        \norm{u_0}_{L^2(\ol{\mathbb{K}}_r)}^2 
        &= \iint_{\T^d \times B_r(0,0) \subset \T^d \times \R^{m+1}} \abs{u(\te, x)}^2 e^{2\sqrt{\lambda} z} \, d\te \, dx \,dz 
        \le 2 r e^{2 \sqrt \la r} \norm{u}_{L^2(\mathbb{K}_r)}^2,
    \end{align*}
    while
    \begin{align*}
        \norm{u}_{L^2(\mathbb{K}_r)}^2
        &= \int_{\T^d \times B_{ r }(0)} \abs{u(\te, x)}^2 \, d\te \, dx
        \le \frac{1}{2 r \sqrt{\si - 1} } \int_{\T^d \times B_{ r }(0)}  \abs{u(\te, x)}^2 \int_{ - \sqrt{\si r^2 - \abs{x}^2}  }^{\sqrt{\si r^2 - \abs{x}^2 }} \, dz \, d\te \, dx     \\
        &\le \frac{1}{2 r \sqrt{\si - 1} }  \iint_{\T^d \times B_{\sqrt \si r}(0,0)} \abs{u_0(x,t, z)}^2 e^{- 2 \sqrt \la z}  \, d\te \, dx \, dz \\
        &\le  \frac{e^{2 \sqrt{\si \la} r}}{2 r \sqrt{\si - 1} }  \norm{u_0}^2_{L^2 (\overline{\mathbb{K}}_{\sqrt \si r})}
        = \frac{e^{2 \sqrt{\si \la} r}}{2 r \si } \sqrt \si \sqrt{\frac{\si}{\si - 1}}  \norm{u_0}^2_{L^2 (\overline{\mathbb{K}}_{\sqrt \si r})},
    \end{align*}
    then we see that
    \begin{align*}
		\norm{u}_{L^2(\mathbb{K}_{r_2})}
        &\le \si_\La^{1/4} \La^{2} \pr{\dfrac{\sigma r_2}{r_1^{\kappa} r_3^{1-\kappa}} }^{m+ \frac 1 2} \pr{ \frac{\sigma}{ \sigma-1} }^{\alpha-\frac 3 4} e^{ c_2 \brac{ \al + \phi (R)} + 2 \sqrt \la R} \norm{u}_{L^2(\mathbb{K}_{r_1})}^\kappa 
        \norm{u}_{L^2(\mathbb{K}_{r_3})}^{1-\kappa}.
    \end{align*}
    After simplifying and adjusting the definition of $c_2$, we reach the conclusion in \eqref{eqn:3_rect_eig}.
\end{proof}

\section{Applications to unique continuation at infinity} \label{sec:applications}

In this section, we apply the three-cylinder inequality in \eqref{eqn:3_rect_eig} to solutions of the eigenvalue equation
\begin{equation}
    \label{eqn:eig_eqn0}
    - \di\pr{A \gr u} + W \cdot \gr u + V u = \la u,
\end{equation}
when $\abs{\nabla A}$ exhibits polynomial decay in the $x$-direction:
\begin{align}
\label{eqn:A_decay_expl}
	\abs{\nabla A(\te, x)} \le \dfrac{L}{(1 + \abs{x})^\tau} \,\,\,\, \text{ for a.e. } (\te, x)
\end{align}
and some $\tau \in [0, 1]$.
Depending on the value of $\tau$, we obtain a range of unique continuation-type results at infinity:
\begin{enumerate}[(I)]
	\item If $\tau \in [0,1)$, solutions of \eqref{eqn:eig_eqn0} cannot decay faster than  $\exp \set{ -C  \exp \pr{\abs{x}^{1 - \tau } }}$.
    \label{cond:1}
	\item If $\tau = 1$, solutions of \eqref{eqn:eig_eqn0} cannot decay faster than $\exp\pr{ -C \abs{x}^p}$ for some exponent $p  >  0$.
    \label{cond:thresh}
\end{enumerate}
We also consider the behavior of solutions of \eqref{eqn:eig_eqn0} when $V, W \equiv 0$ and $\abs{\nabla A}$ decays at a slightly faster rate, i.e.,
\begin{align}
\label{eqn:A_decay_expllog}
	\abs{\nabla A(\te, x)} \le \dfrac{L}{(1 + \abs{x}) \log (1 + \abs{x})}
    \,\,\,\, \text{ for a.e. } (\te, x).
\end{align}

\begin{enumerate}[(I)]
\setcounter{enumi}{2}
	\item If \eqref{eqn:A_decay_expllog} holds, solutions  of \eqref{eqn:eig_eqn0} cannot decay faster than $  \exp\set{-C \abs{x}  e^{\log(\abs{x})^{\beta}}}$ for any $\beta \in (1/2, 1)$.
    \label{cond:1+}
\end{enumerate}

One can view the case \ref{cond:thresh} above as the threshold for ``unique continuation at infinity'' for equations of the type \eqref{eqn:eig_eqn0} since $\abs{x}$ only appears polynomially inside the exponential. 
This observation could be compared with the fact that when $A$ is \textit{Lipschitz} continuous, non-trivial local solutions to equations of the type \eqref{eqn:eig_eqn0} have finite order of vanishing, \cite{GL86}.

We point out that the power $p$ of the polynomial decay in \ref{cond:thresh} depends on the constant $L$; we can show that if $L$ is sufficiently small, then $p$ can be taken equal to $2$ (or in the case $W \equiv 0$, $p$ can be taken equal to $4/3$); see Theorem \ref{thm:tau=1}. 
Moreover, the examples we construct in Section \ref{sec:examples} show that this power $p > 0 $ tends to infinity as $L$ tends to infinity (see Theorem \ref{thm:eg_1}).
The proofs of each of the results claimed in \ref{cond:1}--\ref{cond:1+} are consequences of a general iterative argument described by Proposition \ref{prop:decay_cyl_p_eig}.

\subsection{The iterative argument}

We turn to the iterative argument that we use to prove unique continuation estimates at infinity for solutions of \eqref{eqn:eig_eqn0} when $\abs{\gr A}$ satisfies decay like \eqref{eqn:A_decay_expl} or \eqref{eqn:A_decay_expllog}. 
In fact, in this section, we prove a more general result that allows us to deal with general behavior for the decay of $\abs{\nabla A}$. 
The set up for this result (Proposition \ref{prop:decay_cyl_p_eig}) is the following.

For some decreasing function $f:[1, \infty] \rightarrow [0, \infty)$ and some constant $L > 0$, assume the following two conditions on the gradient $\abs{\nabla A}$ :
\begin{equation}
\label{eqn:eta_cyl_p}
	\begin{split}
		\abs{\nabla A(\te, x)} & \le L \quad\quad\,\,\,\,\, \text{for a.e. } (\te, x) \in \T^d \times \R^m, \\
		\abs{ \nabla A(\te, x)  } & \le f(\abs{x}) \,\,\,\,  \text{ for a.e. } (\te, x) \in \T^d \times \R^m, \, \abs{x} > 1.
	\end{split}
\end{equation}
This first global bound on $\abs{\nabla A}$ in the full space $\T^d \times \R^m$ is in some sense necessary, as per the counterexamples to unique continuation of Plis \cite{Pli63} and Miller \cite{Mil74}. 
On the other hand, the function $f$ governs some sort of decay of $\abs{\nabla A(\te, x)}$ as $\abs{x} \ra \infty$.
It is precisely this decay that relates to the optimal rate of decay at infinity for solutions.

Let $\gamma :[1, \infty) \ra [1, \infty)$ be a non-decreasing continuous function.
We need some non-degeneracy of the map $\gamma$, so we assume (for simplicity) that there is some $R_0 \ge 1$ so that 
\begin{align}
\label{eqn:gamma_nondegen}
    \gamma(R) \ge 2  \, \text{ for all }  \,R \ge R_0.
\end{align} 
For $R \ge \gamma(1)$, we introduce the notation 
\begin{equation}
\label{eqn:R_cyl_p}
    \begin{aligned}
     \hat{R} & \coloneqq R \gamma(R), \\
	R &\eqqcolon \check{R} \gamma(\check{R}).   
    \end{aligned}
\end{equation}
Since $\gamma$ is non-decreasing, the existence of $\check{R}$ is guaranteed by the inverse function theorem for the strictly increasing map $R \mapsto R \gamma(R)$. 
The definition of $\check{R}$ is so that if we inductively define the sequence $R_1 \ge \gamma(1)$, $R_k = \hat{R}_{k-1}$, then $\check{R}_k = R_{k-1}$. 
In addition, we introduce the notation
\begin{align}
\label{eqn:phi_tilde_cyl_p}
	\tilde{\phi}(R) & \coloneqq \int_{\frac 1 2 \check{R}}^{R} f(s) \, ds. 
\end{align}
As we show in the following result, $\tilde{\phi}(R)$ captures the influence of $\abs{\gr A}$ .

In the proposition below, we provide a general (albeit cumbersome) framework to deal with a wide variety of the functions $f$ in condition \eqref{eqn:eta_cyl_p}.
We then use this framework to prove quantitative unique continuation results at infinity for solutions to generalized Schr\"{o}dinger equations. 
The point of this proposition is that for each decay function $f$, one must construct a collection of functions, $g$, $h$, and $\gamma$, that satisfy certain compatibility conditions.
Then, using an iterative argument, we can establish lower bounds on the rate of decay at infinity for solutions $u$ in terms of the functions $g$, $h$, and $\tilde \phi$. 
Later, in Theorems \ref{thm:tau_betw}, \ref{thm:tau=1}, and \ref{thm:tau=1+}, we explicitly construct such functions for particular examples of $f$, then appeal to Proposition \ref{prop:decay_cyl_p_eig} to estimate the optimal rates of decay.

\begin{prop}[Iteration scheme]
\label{prop:decay_cyl_p_eig}
    Assume that $A \in \mathcal{M}(d,m,\La)$ satisfies the gradient decay estimate in \eqref{eqn:eta_cyl_p}. 
    Let $\tilde{\phi}$ be as defined in \eqref{eqn:phi_tilde_cyl_p}, where we recall the notation from \eqref{eqn:R_cyl_p} for continuous, non-decreasing $\ga : [1, \iny) \to [1, \iny)$ that satisfies \eqref{eqn:gamma_nondegen}.
    Let $g, h: [0,\infty) \ra [0, \infty)$ be non-negative, increasing functions that satisfy the vanishing condition
	\begin{align}
    \label{eqn:vanishing_eig}
		\limsup_{R \ra \infty} \dfrac{ \gamma(R) \log(\hat R) g(R) h(R) e^{ \tilde c_1 \tilde{\phi}(R)}   }{g(\hat R) h(\hat R)  } & = 0,  
	\end{align}
	where $\tilde c_1 := \La^{3/2} c_1(d, m , \Lambda^2)$ and $c_1 > 0$ is the constant from Corollary \ref{cor:3cylinder}.
    Let $W$ and $V$ satisfy \eqref{eqn:pot_bounds} and let $\la \in \C$.
    For $R_0(d, m, \La, f, \ga, g, h) \ge \max\set{6 \La, \La^2, \ga^{-1}(2)}$ that satisfies \begin{align}
    \label{eqn:R0BigCond_eig}
        \sup_{R \ge R_0} \dfrac{ \gamma(R) \log(\hat R) g(R) h(R) e^{ \tilde c_1 \tilde{\phi}(R)}   }{g(\hat R) h(\hat R)  } 
        \le \frac 1 {64 \La e^{2 c_1}},
    \end{align}
    assume additionally that for all $R \ge R_0$, we have
	\begin{align}
		\log \abs{\mathbb{K}_R} & \le g(R)
        \label{eqn:g_1_crudeBd_eig} \\ 
          F(\hat R) \log(\gamma(R)) + \tilde G(\hat R) 
          & \le g(\hat{R}), 
        \label{eqn:g_1_cyl_p_eig} \\
		\gamma(R) \log (\hat{R}) g(2 \La \hat{R}) & \le g(\hat{R}) h(\hat{R}) ,
        \label{eqn:h_1_cyl_p_eig}
	\end{align}
    where $F(R)$ is defined in \eqref{eqn:bigF_defn} and
    \begin{equation}
    \label{eqn:tG_defn}
        \begin{aligned}
            \tilde G(R) &:= \sqrt \la R + \tilde \phi(R).
        \end{aligned}
    \end{equation}
    If $u \in H^1_{\loc}(\cyl)$ is a nontrivial solution of \eqref{eqn:eig_eqn0} in $\T^d \times \R^m$ that is normalized so that 
	\begin{align}
    \label{eqn:u_normalized_eig}
		\norm{u}_{L^2(\mathbb{K}_1)} = 1,
	\end{align}
	and bounded in the sense that
	\begin{align}
    \label{eqn:u_bdd_cyl_p_eig}
		\abs{u(\te, x)} \le \exp \set{ c_g \brac{g(\abs{x}) + 1} } \quad
        \text{for all } (\te, x) \in \T^d \times \R^m,
	\end{align}
    then there exists $c_3(d, m, \La, L, f, \ga, g, h, R_0, c_g) > 0$ so whenever $\abs{x_0} \ge R_0$, it holds that
	\begin{align}
    \label{eqn:indConclusion_eig}
		  \norm{u}_{L^2(\mathbb{K}_1(x_0))} 
          \ge  \exp \set{ -c_3 g(\abs{x_0}) h(\abs{x_0}) e^{\tilde c_1 \tilde{\phi}(\abs{x_0})}}.
	\end{align}
\end{prop}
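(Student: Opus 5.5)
We will prove \eqref{eqn:indConclusion_eig} by induction along the geometric-type sequence $R_1 \ge R_0$, $R_{k+1} = \hat R_k = R_k\gamma(R_k)$. The inductive claim will be the following: for every $x_0$ with $R_{k-1} \le \abs{x_0} \le R_k$ (suitably adjusted at the first step), $\norm{u}_{L^2(\mathbb{K}_1(x_0))} \ge \exp\{-c_3 g(\abs{x_0}) h(\abs{x_0}) e^{\tilde c_1\tilde\phi(\abs{x_0})}\}$. The base case, where $\abs{x_0}$ ranges over the compact range $[R_0,\hat R_0]$, will follow from the normalization \eqref{eqn:u_normalized_eig} together with the three-cylinder inequality applied to balls centered at $x_0$ with radii of size $\simeq R_0$. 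This gives some positive lower bound depending only on the data, and that bound is absorbed into $c_3$.

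For the inductive step, fix $x_0$ with $\abs{x_0} \simeq \hat R$, where $R \simeq$ the previous scale. We will apply Corollary \ref{cor:3cylinder} to the cylinder centered at $x_0$. First we translate and make the linear change of variables in $x$ so that $A^{(2)}(\te_0,x_0)=I$. This distorts distances by a factor $\La^{1/2}$ and changes the ellipticity constant to $\La^2$, which explains the constants $\tilde c_1=\La^{3/2}c_1(d,m,\La^2)$ and $2\La\hat R$. The radii are chosen as follows: $r_1 = 1$ (so that $\norm{u}_{r_1}$ is the quantity we want to bound below), $r_2 \simeq \check{\abs{x_0}}$, so that $\mathbb{K}_{r_2}(x_0)$ contains a unit cylinder $\mathbb{K}_1(y)$ with $\abs{y} \le \abs{x_0} - r_2$ at the previous scale, and $r_3 \simeq 2\La\abs{x_0}$ with $\sigma$ fixed. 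On $\mathbb{K}_{r_3}(x_0)$ we bound $\norm{u}_{r_3}$ by \eqref{eqn:u_bdd_cyl_p_eig} and \eqref{eqn:g_1_crudeBd_eig}, obtaining $\exp\{C g(2\La\hat R)\}$. The key gain is that the ball of radius $r_2\simeq \check{\abs{x_0}}$ around $x_0$ stays at distance $\ge \abs{x_0} - r_2 \gtrsim \frac12\check{\abs{x_0}}$ from the origin. Hence $\phi$ computed in the recentred frame is controlled by $\tilde\phi(\abs{x_0}) = \int_{\check R/2}^{R} f$ rather than by $L\abs{x_0}$. This is why we use the decay $f$ in \eqref{eqn:eta_cyl_p} and not the global bound $L$.

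Rearranging the three-cylinder inequality gives
\[
\log\norm{u}_{r_1} \ge \kappa^{-1}\Big[\log\norm{u}_{r_2} - (1-\kappa)\log\norm{u}_{r_3} - CF - c_2\tilde G\Big].
\]
From \eqref{eqn:kappa_def}, $\kappa^{-1} \lesssim e^{c_1\tilde\phi}\,\log(\hat R)/\log(\gamma)$, which up to constants is at most $\gamma(R)\log(\hat R)\,e^{\tilde c_1\tilde\phi}$. The induction hypothesis at the smaller scale gives $\log\norm{u}_{r_2} \ge -c_3 g(R)h(R)e^{\tilde c_1\tilde\phi(R)}$. Inserting this, the cost terms are $F(\hat R)\log\gamma + \tilde G(\hat R) \le g(\hat R)$ by \eqref{eqn:g_1_cyl_p_eig}, and the $r_3$ term is $\gamma\log(\hat R) g(2\La\hat R) \le g(\hat R)h(\hat R)$ by \eqref{eqn:h_1_cyl_p_eig}. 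The inherited term is $c_3\gamma\log(\hat R)g(R)h(R)e^{\tilde c_1\tilde\phi(R)}e^{\tilde c_1\tilde\phi(\hat R)}$. By \eqref{eqn:R0BigCond_eig}, and after absorbing the monotone factor, this is at most $\frac{c_3}{64\La e^{2c_1}}g(\hat R)h(\hat R)e^{\tilde c_1\tilde\phi(\hat R)}$. Summing the three pieces and choosing $c_3$ large enough closes the induction, giving $-\log\norm{u}_{L^2(\mathbb{K}_1(x_0))} \le c_3 g h e^{\tilde c_1\tilde\phi}$ at $\abs{x_0}$.

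The main obstacle is the bookkeeping of the exponent. The inductive bound carries $e^{\tilde c_1\tilde\phi(R)}$, while $\kappa^{-1}$ introduces a fresh factor $e^{c_1\tilde\phi(\hat R)}$. To keep the exponentials from compounding, we must check carefully that \eqref{eqn:vanishing_eig} (as $\gamma\log(\hat R)\,g h e^{\tilde c_1\tilde\phi(R)}$ versus $g h(\hat R)$) is exactly what absorbs one factor at each step. We also must ensure that the geometric choices ($r_2 \sim \check{\abs{x_0}}$ and the previous-scale cylinder lying inside $\mathbb{K}_{r_2}(x_0)$ after the $\La$-distortion) are consistent with the requirement $\abs{y} \in [R_{k-1},R_k]$. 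We expect to handle intermediate $\abs{x_0}$ by monotonicity of $g,h,\tilde\phi$, possibly at the cost of replacing $R_0$ by a slightly larger value.
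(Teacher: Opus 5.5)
\textbf{Gap: the radii in the inductive step.} Your architecture matches the paper: induction along $R_k=\hat R_{k-1}$, a base case from the normalization plus one three-cylinder inequality using the global bound $L$, the growth bound on the outer cylinder, and absorption via \eqref{eqn:R0BigCond_eig}, \eqref{eqn:g_1_cyl_p_eig}, \eqref{eqn:h_1_cyl_p_eig}. But your choice of radii in the inductive step is not admissible, and for it the claimed control of $\phi$ by $\tilde\phi$ is false. Take $\abs{x_0}=R_k$, so $\check R_k=R_{k-1}$ and $R_k=\ga(R_{k-1})R_{k-1}$.

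In Corollary \ref{cor:3cylinder} the exponent $\kappa$ involves $\phi$ at the \emph{outermost} radius $R>r_3$, not at $r_2$. With $r_3\simeq 2\La R_k$, the outer cylinder contains the origin and all of $B_{R_{k-1}/2}$, where only $\abs{\nabla A}\le L$ is available. So the only bound is $\phi\lesssim \La L R_k$, and $\kappa^{-1}$ carries a factor $e^{cLR_k}$ that \eqref{eqn:R0BigCond_eig} cannot absorb. To get $\phi\le\La^{3/2}\tilde\phi(R_k)$ with $\tilde\phi(R_k)=\int_{R_{k-1}/2}^{R_k}f$, the outer cylinder must avoid $B_{R_{k-1}/2}$, i.e.\ $r_3\lesssim R_k-\frac12R_{k-1}$. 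The $2\La\hat R$ in \eqref{eqn:h_1_cyl_p_eig} only reflects that the distorted outer ball lies in $B_{2\La R_k}$ when \eqref{eqn:u_bdd_cyl_p_eig} is applied; $r_3$ itself is not that large.

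Conversely, $r_2\simeq R_{k-1}$ is too small. A unit cylinder inside $\mathbb{K}_{r_2}(x_0)$ has center at distance at least $R_k-r_2+1\approx(\ga(R_{k-1})-1)R_{k-1}$ from the origin; your inequality $\abs{y}\le\abs{x_0}-r_2$ goes the wrong way. That is not the previous scale once $\ga$ is large, as in every application in Section \ref{sec:applications}. To use the inductive hypothesis at $y=\frac{R_{k-1}}{R_k}x_0$ you need $r_2\gtrsim R_k-R_{k-1}$, comparable to $R_k$ itself.

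\textbf{What the two constraints force.} They force the paper's configuration: $r_2\approx R_k-R_{k-1}$ and $r_3\approx R_k-\frac12R_{k-1}$, made precise through the function $\rho$ that accounts for the $S_k$-distortion, with $\si r_2=\frac{r_2+r_3}2$. Thus $\si$ is \emph{not} fixed: $\si-1\approx 1/\ga(R_{k-1})$. The annulus is thin, so $\log\frac{r_3}{\si r_2}\approx\frac{r_3-r_2}{r_3+r_2}\approx\ga(R_{k-1})^{-1}$, while $\log\frac{\si r_2}{r_1}\approx\log R_k$. This yields $\kappa^{-1}\le 32\La e^{2c_1}\ga(R_{k-1})\log(R_k)e^{\tilde c_1\tilde\phi(R_k)}$. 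That is the genuine origin of the factor $\ga(R)\log\hat R$ in \eqref{eqn:vanishing_eig} and of the constant in \eqref{eqn:R0BigCond_eig}. Likewise, $\log\frac{\si}{\si-1}\lesssim\log\ga(R_{k-1})$ is why $\log\ga(R)$ multiplies $F(\hat R)$ in \eqref{eqn:g_1_cyl_p_eig}.

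Your estimate $\kappa^{-1}\lesssim e^{c_1\tilde\phi}\log\hat R/\log\ga$ comes from radii that cannot be used together, so the inductive step as written does not close. With the corrected radii, the remaining bookkeeping is as you describe. For intermediate $\abs{x_0}$ you do not need monotonicity of $\tilde\phi$, which may fail. Writing $R=\abs{x_0}$ and comparing with the point at distance $\check R$ is exactly the paper's device of restarting the sequence from some $\rho_0\in[R_0,R_1]$.
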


\begin{proof}
	
    By \eqref{eqn:vanishing_eig}, we can choose $R_0 \gg 2 + \gamma(1)$ so that $R_0 \ge \max\set{6 \La, \La^2, \ga^{-1}(2)}$ and \eqref{eqn:R0BigCond_eig} holds.
    Then, inductively define $R_{k} \coloneqq \hat{R}_{k-1} = R_{k-1} \ga(R_{k-1})$ for $k \in \N$.
       
	We claim that there exists $c_3 > 1$ so that for each $k \in \N \cup \set{0}$ and each $x_k \in \R^m$ with $\abs{x_k} = R_k$, it holds that
	\begin{align}
    \label{eqn:ind_cyl_p_eig}
		\norm{u}_{L^2(\mathbb{K}_1(x_k))}  
        \ge \exp \set{ -c_3 g(R_k) h(R_k) e^{\tilde c_1 \tilde{\phi}(R_k)}}.
	\end{align}
	
    Let $x_0 \in \R^m$ with $\abs{x_0} = R_0$.
    With $S_0 \coloneqq A^{(2)}(0, x_0)^{1/2}$, we consider the shifted solution, 
    \begin{align*}
    	u_0(\te, x) \coloneqq u(\te, x_0 + S_0 x).
    \end{align*}
    Set $V_0(\te, x) := V(\te, x_0 + S_0 x)$, $W_0(\te, x) := \pr{W^{(1)}(\te, x_0 + S_0 x), S_0^{-1} W^{(2)}(\te, x_0 + S_0 x)}$, where $W^{(1)}$ is $\C^d$-valued and $W^{(2)}$ is $\C^m$-valued, and note that $\norm{V_0}_\iny \le M$ and $\norm{W_0}_\iny \le \La^{1/ 2} K$.
    With
    \begin{align*}
        A_0 (\te, x) \coloneqq \begin{pmatrix}
    		A^{(1)}(\te, x_0 + S_0 x) & 0 \\
    		0 & S_0^{-1} A^{(2)}(\te, x_0 + S_0 x) S_0^{-1}
    	\end{pmatrix},
    \end{align*}
    we see that 
    \begin{align*}
    	-\divv(A_0 \nabla u_0) + W_0 \cdot \nabla u_0 + V_ 0 u_0 = \la u_0 .
    \end{align*}
    Since $A \in \mathcal{M}(d,m,\La)$, then $A_0 \in \mathcal{M}(d,m,\La^2)$.
    Moreover, $A_0^{(2)}(0,0) = I$ and $\abs{\nabla A_0} \le \Lambda \abs{\nabla A} \le \Lambda L$ a.e.
    Thus, with $\eta(r) = \Lambda L$ and $\phi(r) = \Lambda L r$, we can apply Corollary \ref{cor:3cylinder} with the radii $r_1 = \Lambda^{-1/2}$, $r_2 = \Lambda^{1/2} (R_0 + 1)$, $\sigma = 2 \frac{R_0}{R_0 + 1} < 2$, and $r_3 = R = 3\Lambda^{1/2} R_0$, to obtain
    \begin{equation*}
  	   \begin{aligned}
	   \norm{u_0}_{L^2 (\mathbb{K}_{r_2})}
        &\le \exp \set{ \tilde c_2 \brac{F(R_0) + G_0(R_0)}  } 
        \norm{u_0}_{L^2 (\mathbb{K}_{r_1})}^{\kappa_0}
        \norm{u_0}_{L^2 (\mathbb{K}_{r_3})}^{1-\kappa_0},  
	    \end{aligned}
	\end{equation*}
    where $\tilde c_2 := 9 \La^2 \brac{c_2(d, m, \La^2) + \log 3} > 0$, $G_0(R) \coloneqq \pr{\sqrt \la + L} R$, and $\kappa_0 \in (0,1)$ is the exponent defined by 
	\begin{align*}
    	\kappa_0^{-1} 
        \coloneqq \dfrac{ e^{c_1\pr{3 \Lambda^{3/2} L R_0 + 2}}  \log\pr{2 \La R_0}  }{\log\pr{3/2}} + 1,
	\end{align*}
	with $c_1(d, m+1, \La^2) > 0$.
    Since $B_1 \su x_0 + S_0 B_{r_2}$, then
    \begin{align*}
        \int_{B_{r_2}} \abs{u_0(\cdot, x)}^2 dx
        &= \int_{B_{r_2}} \abs{u(\cdot, x_0 + S_0 x)}^2 dx
        = \abs{S_0^{-1}} \int_{x_0 + S_0 B_{r_2}} \abs{u(\cdot, y)}^2 dy \\
        &\ge \La^{-m/2}\int_{B_{1}(0)} \abs{u(\cdot, y)}^2 dy,
    \end{align*}
    and the normalization condition in \eqref{eqn:u_normalized_eig} shows that $\disp 1 \le \norm{u}_{L^2(\mathbb{K}_{1})} \le \La^{m/ 4} \norm{u_0}_{L^2 (\mathbb{K}_{r_2})}$.
    Similarly, as $x_0 + S_0 B_{r_1} \su B_1(x_0)$, then
    \begin{equation}
    \label{eqn:littleBallEst_eig}
        \begin{aligned}
        \int_{B_{r_1}} \abs{u_0(\cdot, x)}^2 dx
        &= \abs{S_0^{-1}}\int_{x_0 + S_0 B_{r_1}} \abs{u(\cdot, y)}^2 dy 
        \le \La^{m/ 2} \int_{B_1(x_0)}\abs{u(\cdot, y)}^2 dy,   
        \end{aligned}
    \end{equation}
     which implies that $\norm{u_0}_{L^2 (\mathbb{K}_{r_1})} \le \La^{m/ 4} \norm{u}_{L^2 (\mathbb{K}_{1}(x_0))}$.
    Finally, because $x_0 + S_0 B_{r_3} \su B_{4 \La R_0}$, then the bound in \eqref{eqn:u_bdd_cyl_p_eig} shows that
    \begin{align*}
        \norm{u_0}_{L^2(\mathbb{K}_{r_3})} 
        &\le \abs{\mathbb{K}_{4 \La R_0}}^{1/2} \norm{u}_{L^\iny(\mathbb{K}_{4\La R_0})}
        \le \exp \set{ c_g \brac{g(4 \Lambda R_0) + 1} + \tfrac 1 2 \log \abs{\mathbb{K}_{4 \La R_0}} } \\
        &\le \exp \set{ \pr{2 c_g + \tfrac 1 2} g(4 \Lambda R_0)},
    \end{align*}
    where we have used \eqref{eqn:g_1_crudeBd_eig}.
    Putting these observations together and simplifying yields
    \begin{align*}
        \norm{u}_{L^2 (\mathbb{K}_{1}(x_0))}
        &\ge \exp \set{- e^{c_4 \pr{L R_0 +1}} \brac{ F(R_0)  + G_0(R_0)  + g(4 \Lambda R_0)} },
    \end{align*}
    where $c_4(d, m, \Lambda, c_g) > 0$. 
    If we choose $c_3 \ge \be(R_0)$, where
    \begin{equation}
        \label{eqn:c3bcCond_eig}
        \be(R) := e^{c_4 \pr{L R + 1} - \tilde c_1 \tilde{\phi}(R)}\frac{F(R)  + G_0(R)  + g(4 \Lambda R)}{g(R) h(R)},
    \end{equation}
    then the base case $k = 0$ of \eqref{eqn:ind_cyl_p_eig} holds.

    \vspace{1em}
    
 	We now assume that \eqref{eqn:ind_cyl_p_eig} holds for $k-1 \in \N \cup \set{0}$ and prove the claim for $k \in \N$. 
    To this end, we fix any $x_k \in \partial B_{R_k}$, and define $S_k \coloneqq A^{(2)}(0, x_k)^{1/2}$. 
    As in the base case, we define $u_k(\te, x) = u(\te, x_k + S_k x)$, $V_k(\te, x) = V(\te, x_k + S_k x)$, $W_k(\te, x) = \pr{W^{(1)}(\te, x_k + S_k x), S_k^{-1} W^{(2)}(\te, x_k + S_k x)}$, and 
    $$A_k(\te, x) = \begin{bmatrix}
        A^{(1)}(\te, x_k + S_k x) & 0 \\ 0 & S_k^{-1} A^{(2)}(\te, x_k + S_k x) S_k^{-1}.
    \end{bmatrix}$$ 
    We have $\norm{V_k}_\iny \le M$, $\norm{W_k}_\iny \le \La^{1/2} K$, $A_k \in \mathcal{M}(d,m,\La^2)$, and $A_k^{(2)}(0,0) = I$.
    
    Next, we introduce $\rho : [0, R_k] \to [0, \La^{1/2} R_k]$ defined as 
    \begin{align*}
		\rho(r) & \coloneqq \sup \,  \{  s  > 0 \, : \, (x_k + S_k B_s) \cap B_{r}  = \emptyset  \}
	\end{align*}
    with $\rho(R_k) = 0$.
    The inverse function $\rho^{-1} : \text{Im}(\rho) \to [0, R_k]$ is given by 
	\begin{align*}
		\rho^{-1}(r) & \coloneqq \inf  \, \{  \abs{x}  \, : \, x \in x_k + S_k B_r \}.
	\end{align*}
	It is straight-forward to check that indeed $\rho$ and $\rho^{-1}$ are inverses, and moreover, that
	\begin{align}
    \label{eqn:rho_prime_eig}
	\Lambda^{-1/2}	\le - \rho'(r) \le \La^{1/2},  \, \, \, \La^{-1/2} \le -(\rho^{-1})'(r) \le \Lambda^{1/2}.
	\end{align}
	Define $R = \rho\pr{\tfrac 1 2 R_{k-1}}$, and note that
    \begin{align*}
    	-\divv(A_k \nabla u_k) + W_k \cdot \nabla u_k + V_k u_k = \la u_k \text{ in } \mathbb{K}_R.
    \end{align*}
    From the second part of \eqref{eqn:eta_cyl_p}, we see that 
	\begin{equation*}
    \begin{split}
        \phi_k(R) 
        & \coloneqq \int_0^R \norm{ \nabla A_k  }_{L^\infty(\mathbb{K}_r)} \, dr
        \le \Lambda \int_{\rho(R_k)}^{\rho\pr{\frac 1 2 R_{k-1}}} \norm{\nabla A}_{L^\infty \left (  \T^d \times  \left( \R^m \setminus B_{\rho^{-1}(r) }   \right)  \right )} \, dr \\
		& \le \Lambda  \int_{\rho(R_k)}^{\rho\pr{\frac 1 2 R_{k-1}}} f( \rho^{-1}(r)   ) \, dr 
        \le \Lambda^{3/2} \int_{\frac 1 2 R_{k-1}}^{R_k} f(s) \, ds 
        \le \Lambda^{3/2} \tilde{\phi}(R_k),
    \end{split}
	\end{equation*}
    where we recall \eqref{eqn:phi_tilde_cyl_p}.

    With $r_1 = \Lambda^{-1/2}$, $r_2 = \rho(R_{k-1}) + \Lambda^{1/2}$, $r_3 = R \le \La^{1/2} R_k$, and $\sigma \in (1, \La)$ chosen so that $\disp \sigma r_2 = \frac{r_2 + r_3} 2$, we may apply Corollary \ref{cor:3cylinder} to $u_k$ to obtain the following three-cylinder inequality:
    \begin{equation}
    \label{eqn:threCylk_eig}
       \begin{aligned}
	   \norm{u_k}_{L^2 (\mathbb{K}_{r_2})}
        &\le \exp \set{ \La^2 \brac{c_2 + \log\pr{\frac \si {\si - 1}} }F(R_k)  + \La^{3/2} c_2 \tilde G(R_k)  } \\
        &\times \norm{u_k}_{L^2 (\mathbb{K}_{r_1})}^{\kappa_k}
        \norm{u_k}_{L^2 (\mathbb{K}_{r_3})}^{1-\kappa_k},  
	    \end{aligned}
	\end{equation}
    where $\kappa_k \in (0,1)$ is given by
	\begin{align*}
		\kappa_k^{-1} \coloneqq 
		e^{c_1 \pr{\Lambda^{3/2} \tilde{\phi}(R_k) + 2}} \dfrac{   \log\pr{\La^{1/2} \sigma r_2}}{\log\pr{1 + \frac{r_3 - r_2}{r_3 + r_2 }}} + 1.
	\end{align*}
    By the estimates in \eqref{eqn:rho_prime_eig}, one readily checks that 
    \begin{align*}
        r_2 &= \Lambda^{1/2} + \int_{R_{k-1}}^{R_k} - \rho'(r) dr
        \ge \La^{-1/2} \pr{R_k - R_{k-1}}
        = \La^{-1/2} \pr{\ga(R_{k-1}) - 1} R_{k-1}
        \\
        r_3 &= \int_{\tfrac 1 2 R_{k-1}}^{R_k} - \rho'(r) dr
        \ge \La^{-1/2} \pr{R_k - \tfrac 1 2 R_{k-1}}
        \ge \tfrac 1 2 \La^{-1/2} R_{k}
        \\
    	r_3 - r_2 
        &= - \Lambda^{1/2} +  \int_{ R_{k-1} / 2}^{ R_{k-1}  }  - \rho'(r)\, dr 
        \ge \tfrac 1 2 \La^{-1/2} R_{k-1} - \La^{1/2} 
        \ge \tfrac 1 3 \La^{-1/2} R_{k-1} \\
        r_3 - r_2 
        &\le  \int_{ R_{k-1} / 2}^{ R_{k-1}  }  - \rho'(r)\, dr 
        \le \tfrac 1 2 \La^{1/2} R_{k-1},
    \end{align*}
   where the third bound uses that $R_0 \ge 6 \La$.
    Since $\log(1+ \epsilon) \ge \eps - \frac {\eps^2} 2 = \eps\pr{1 - \frac \eps 2} \ge \frac 1 2 \eps$ for $\epsilon \in (0, 1)$, then
    \begin{equation}
    \label{eqn:kappakBound_eig} 
        \begin{aligned}
         \kappa_k^{-1} 
        &\le 4 e^{c_1 \pr{\Lambda^{3/2} \tilde{\phi}(R_k) + 2}} \log\pr{\La R_k} \frac{r_3 + r_2 }{r_3 - r_2}
        \le 24 \La e^{\tilde c_1 \tilde{\phi}(R_k) + 2c_1 } \frac{R_k \log\pr{\La R_k} }{R_{k-1}} \\
        &\le 32 \La e^{2c_1 } \ga(R_{k-1})\log\pr{R_k}e^{\tilde c_1 \tilde{\phi}(R_k)},
        \end{aligned}
    \end{equation}
    where we used that $R_0 \ge \La^2$.
    Because $\disp \si = \frac{r_2 + r_3}{2 r_2} = 1 + \frac{r_3 - r_2}{2 r_2} \le \frac 1 2 + \frac \La 4 < \La$, then
    $$\frac{\si}{\si - 1} 
    = \frac{r_3 + r_2}{r_3 - r_2} 
    \le 6 \La \ga(R_{k-1}),$$ 
    and then with $c_5 := 1 + \log_2(6\La)$ and using that $\ga(R_{k-1}) \ge 2$, we get
    \begin{align*}
    	\abs{\log\pr{\frac{\sigma}{\sigma-1}} } 
    	\le c_5 \log(\gamma(R_{k-1})).
    \end{align*}
    For $y_{k-1} \in \R^m$ defined by $y_{k-1} = \frac{R_{k-1}}{R_k} x_k$, since $B_1(y_{k-1}) \su x_k + S_k B_{r_2}$, then 
    \begin{align*}
        \int_{B_{r_2}} \abs{u_k(\cdot, x)}^2 dx
        &= \abs{S_k^{-1}} \int_{x_k + S_k B_{r_2}} \abs{u(\cdot, y)}^2 dy
        \ge \La^{-m/2} \int_{B_{1}(y_{k-1})} \abs{u(\cdot, y)}^2 dy
    \end{align*}
    and the inductive hypothesis described by \eqref{eqn:ind_cyl_p_eig} shows that
    \begin{align*}
        \norm{u_k}_{L^2 (\mathbb{K}_{r_2})}
        &\ge \La^{-m/4} \exp \set{ -c_3 g(R_{k-1}) h(R_{k-1}) e^{\tilde c_1 \tilde{\phi}(R_{k-1})}}.
    \end{align*}
    As in \eqref{eqn:littleBallEst_eig}, $\norm{u_k}_{L^2 (\mathbb{K}_{r_1})} \le \La^{m/ 4} \norm{u}_{L^2 (\mathbb{K}_{1}(x_k))}$.
    On the other hand, using that $x_k + S_k B_R \su B_{2 \La R_k}$ and the boundedness assumption in \eqref{eqn:u_bdd_cyl_p_eig}, we get 
    \begin{align*}
    	\norm{u_k}_{L^2\pr{\mathbb{K}_{r_3}}}
    	\le \exp \set{ c_g \brac{g(2 \Lambda R_k) + 1} + \tfrac 1 2 \log \abs{\mathbb{K}_{2 \La R_k}} } 
    	\le \exp \left \{ \pr{2 c_g + \tfrac 1 2} g(2\Lambda R_k)  \right \},
    \end{align*}
    where \eqref{eqn:g_1_crudeBd_eig} was applied.
    Returning to the three-cylinder inequality in \eqref{eqn:threCylk_eig}, we take logarithms and use the bounds from above to get 
    \begin{equation*}
       \begin{aligned}
	   \kappa_k \log \pr{\norm{u}_{L^2 (\mathbb{K}_{1}(x_k))}^{-1}}
        &\le c_3 g(R_{k-1}) h(R_{k-1}) e^{\tilde c_1 \tilde{\phi}(R_{k-1})}
        +\La^2 \brac{c_2 + c_5 \log(\gamma(R_{k-1})) }F(R_k) \\
        &+ \La^{3/2} c_2 \tilde G(R_k)  
        + \pr{2 c_g + \tfrac 1 2} g(2\Lambda R_k) 
        + \frac m 4 \log \La \\
        &\le c_3 g(R_{k-1}) h(R_{k-1}) e^{\tilde c_1 \tilde{\phi}(R_{k-1})}
        + \tilde c_2 \brac{ F(R_k) \log(\gamma(R_{k-1})) + \tilde G(R_k)} \\
        &+ \pr{2 c_g + \tfrac 1 2} g(2\Lambda R_k) 
	    \end{aligned}
	\end{equation*}
    where $\tilde c_2 := \max\set{c_5 \La^2 + \frac{c_2 \La^2}{\log 2 } + \frac{ m \log \La}{4 \log 2 }, \Lambda^{3/2} c_2}$.
    With $\tilde c_g =\tilde c_2  + 2 c_g + \tfrac 1 2$, we use the bound in \eqref{eqn:g_1_cyl_p_eig} and the monotonicity of $g$ to get 
    \begin{equation*}
       \begin{aligned}
	   \kappa_k \log \pr{\norm{u}_{L^2 (\mathbb{K}_{1}(x_k))}^{-1}} 
        &\le c_3 g(R_{k-1}) h(R_{k-1}) e^{\tilde c_1 \tilde{\phi}(R_{k-1})}
        + \tilde c_g g(2\Lambda R_k).
	    \end{aligned}
	\end{equation*}
    Using the bound in \eqref{eqn:kappakBound_eig} shows that
    \begin{equation*}
       \begin{aligned}
	   \log &\pr{\norm{u}_{L^2 (\mathbb{K}_{1}(x_k))}^{-1}} 
        \le \kappa_k ^{-1}\brac{c_3 g(R_{k-1}) h(R_{k-1}) e^{\tilde c_1 \tilde{\phi}(R_{k-1})}
        + \tilde c_g g(2\Lambda R_k)} \\
        &\le 32 \La e^{2c_1} \frac{\ga(R_{k-1}) \log\pr{R_k}  g(R_{k-1}) h(R_{k-1}) e^{\tilde c_1 \tilde{\phi}(R_{k-1})} }{g(R_k) h(R_k)} c_3 g(R_k) h(R_k) e^{\tilde c_1 \tilde{\phi}(R_k) } \\
        &\quad + 32 \La \tilde c_g e^{2c_1 } \ga(R_{k-1}) \log\pr{R_k}  g(2\Lambda R_k) e^{\tilde c_1 \tilde{\phi}(R_k) } \\
        &\le \pr{\frac{c_3}{2} + 32 \La \tilde c_g e^{2c_1 } } g(R_k) h(R_k) e^{\tilde c_1 \tilde{\phi}(R_k) },
	    \end{aligned}
	\end{equation*}
    where we have used \eqref{eqn:R0BigCond_eig} and \eqref{eqn:h_1_cyl_p_eig}. 
    If $c_3 \ge 64 \La \tilde c_g e^{2c_1 }$, then \eqref{eqn:ind_cyl_p_eig} holds.
    In particular, if $c_3 \ge \max \set{\be(R_0), 64 \La \tilde c_g e^{2c_1 }}$ (see \eqref{eqn:c3bcCond_eig} for the base case), then \eqref{eqn:ind_cyl_p_eig} holds for all $k \in \N \cup \set{0}$, establishing the claim.
    
    For general $\abs{x} \ge R_0$, choose $k_0 \in \N$ so that $R_{k_0-1} \le \abs{x} < R_{k_0}$. 
    Take $\rho_0 \in [R_0, R_1]$ so that with $\set{\rho_k}_{k \in \N}$ defined recursively by $\rho_{k} = \hat{\rho}_{k-1}$, we get $\abs{x} = \rho_{k_0}$.
    With $\disp c_3 = \max\set{ \sup_{R \in [R_0, R_1]} \be(R), 64 \La \tilde c_g e^{2c_1 }}$, the conclusion described by \eqref{eqn:indConclusion_eig} holds. 
 \end{proof}

\subsection{Unique continuation for polynomially decaying gradients}

Here we show the first two applications of Proposition \ref{prop:decay_cyl_p_eig}.
These two results address the unique continuation at infinity for solutions $u$ of \eqref{eqn:eig_eqn0} when the matrix $A$ satisfies \eqref{eqn:A_decay_expl} for some $\tau \in [0,1]$. 
The reader should note that the behavior of solutions at infinity is much different for $\tau = 1$ than it is for $\tau \in [0,1)$. 
In both cases, given the decay function $f$, we need to appropriately choose the functions $\gamma$, $g$, and $h$ and check the compatibility conditions in Proposition \ref{prop:decay_cyl_p_eig}. 
We first deal with $\tau \in [0,1)$.
    
\begin{thm}[Rate of decay at infinity for $\tau \in [0,1)$]
\label{thm:tau_betw}
    For $A \in \mathcal{M}(d,m,\La)$ and $\tau \in [0, 1)$, assume that $A$ satisfies the following condition:
	\begin{align*}
    	&\abs{\gr A(\te, x)}\le \frac{L}{(1 + \abs{x})^\tau}  \,\,\,\, \text{ for a.e. } (\te, x) \in \T^d \times \R^m.
	\end{align*}
    Let $W$ and $V$ satisfy \eqref{eqn:pot_bounds} with $K, M \in \brac{0,1}$ and take $\la \in \C$.
    Let $u \in H^1_{\loc}(\T^d\times \R^m)$ be a nontrivial solution of \eqref{eqn:eig_eqn0} in $\T^d \times \R^m$ that is normalized so that \eqref{eqn:u_normalized_eig} holds and bounded in the sense that
	\begin{equation}
	\label{uBound2Decay}
		\abs{u(\te, x)} \le \exp\set{\exp\pr{c_0 \abs{x}^{1 - \tau}} + c_0 }\quad
        \text{for all } (\te, x) \in \T^d \times \R^m.
	\end{equation} 
    Then there exists $R_0(d, m, \La, \la, \tau, L, c_0) > 1$ and $c_3(d, m, \La, \la, \tau, L, c_0) > 0$ so that whenever $\abs{x_0} \ge R_0$, it holds that
    \begin{equation}
    \label{uEstSmallTau}
		\norm{u}_{L^2(\mathbb{K}_1(x_0))} \ge \exp\set{- \exp\pr{c_3 \abs{x_0}^{1-\tau}}}.
	\end{equation}	
\end{thm}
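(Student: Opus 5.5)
We apply Proposition \ref{prop:decay_cyl_p_eig} with $f(s) = L s^{-\tau}$; the whole task is to choose $\gamma$, $g$, $h$ and verify its hypotheses. The conditions in \eqref{eqn:eta_cyl_p} hold directly: globally $\abs{\gr A}\le L$, and for $\abs{x}>1$ we have $\abs{\gr A}\le L\abs{x}^{-\tau}$.

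\textbf{Choice of $\gamma$.} Since the target bound is doubly exponential, we take a constant scaling $\gamma \equiv 2$, so that $\hat R = 2R$ and $\check R = R/2$. Then
\[
\tilde\phi(R)=\int_{R/4}^{R} L s^{-\tau}\,ds \le \frac{L}{1-\tau} R^{1-\tau}.
\]
The factor $e^{\tilde c_1\tilde\phi(R)}$ in \eqref{eqn:indConclusion_eig} is therefore at most $\exp(C R^{1-\tau})$. It then suffices that $g(R)h(R)\le \exp(C' R^{1-\tau})$, which gives \eqref{uEstSmallTau} after enlarging $c_3$.

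\textbf{Choice of $g$ and $h$.} Set $g(R) := \exp(c R^{1-\tau})$ with $c\ge c_0$ large. This makes the growth hypothesis \eqref{uBound2Decay} imply \eqref{eqn:u_bdd_cyl_p_eig} with $c_g=1$. Condition \eqref{eqn:g_1_crudeBd_eig} is immediate. Condition \eqref{eqn:g_1_cyl_p_eig} needs
\[
F(2R)\log 2 + \sqrt{\la}\,2R + \tilde\phi(2R)\le g(2R).
\]
Because $K,M\le1$, $F$ is polynomial in $R$ and $\tilde\phi$ grows like $R^{1-\tau}$, so this holds for $R\ge R_0$ whenever $\tau<1$. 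When $\tau=0$, $g$ is a plain exponential, which still dominates polynomials.

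Set $h(R):=\exp(b R^{1-\tau})$, with $b$ chosen so that \eqref{eqn:h_1_cyl_p_eig} holds:
\[
2\log(2R)\, g(4\La R)\le g(2R)h(2R).
\]
This reduces to
\[
c(4\La)^{1-\tau}R^{1-\tau}+\log(2\log 2R)\le (c+b)2^{1-\tau}R^{1-\tau},
\]
which holds once $b \ge c(2\La)^{1-\tau}$ and $R$ is large.

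\textbf{Main obstacle: the vanishing condition \eqref{eqn:vanishing_eig}.} We must show that
\[
\frac{2\log(2R)\exp\bigl((c+b)R^{1-\tau}+\tilde c_1\tilde\phi(R)\bigr)}{\exp\bigl((c+b)2^{1-\tau}R^{1-\tau}\bigr)}\to0 .
\]
The exponent gain is $(c+b)(2^{1-\tau}-1)R^{1-\tau}$, and this must beat $\tilde c_1 \tfrac{L}{1-\tau}R^{1-\tau}$. That requires
\[
(c+b)(2^{1-\tau}-1) > \tilde c_1 L/(1-\tau).
\]
Since $c$ and $b$ can be taken as large as we like (increasing $b$ does not break \eqref{eqn:h_1_cyl_p_eig}), choose $b$ large depending on $L,\tau,\La$. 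The polynomial factor $\log(2R)$ is then absorbed. One must also check that enlarging $c$ does not spoil the earlier conditions. It does not: all remaining requirements only demand $c$ and $b$ large. The factor $(2^{1-\tau}-1)\to0$ as $\tau\to1$ is what makes the constants blow up, consistent with the threshold at $\tau=1$.

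\textbf{Conclusion.} Fix $R_0$ so that \eqref{eqn:R0BigCond_eig} and all the above hold for $R\ge R_0$. Proposition \ref{prop:decay_cyl_p_eig} then gives
\[
\norm{u}_{L^2(\mathbb{K}_1(x_0))}\ge\exp\bigl\{-c_3\exp\bigl((c+b+\tilde c_1 L/(1-\tau))\abs{x_0}^{1-\tau}\bigr)\bigr\},
\]
and absorbing $c_3$ into the exponent yields \eqref{uEstSmallTau}.
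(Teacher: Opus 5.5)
Your argument is correct. It differs from the paper's only in the choice of $\gamma$, but that choice moves where the work lies.

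The paper takes $\gamma(R)=R$, so $\hat R=R^2$. Then $g(\hat R)h(\hat R)=\exp\set{(1+2\La)c_6R^{2(1-\tau)}}$ is of strictly higher order than $\gamma(R)\log(\hat R)\,g(R)h(R)e^{\tilde c_1\tilde\phi(R)}=\exp\set{O(R^{1-\tau})}$. (The integral defining $\tilde\phi$ is now over $[\tfrac12\sqrt R,R]$, but it is still $O(R^{1-\tau})$.) So \eqref{eqn:vanishing_eig} holds by orders of growth alone, with no constants to compare against $L$.

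With your dyadic choice $\gamma\equiv 2$, the gain $(c+b)(2^{1-\tau}-1)R^{1-\tau}$ and the loss $\tilde c_1\tilde\phi(R)$ are of the same order. You therefore genuinely need the quantitative inequality $(c+b)(2^{1-\tau}-1)>\tilde c_1 L/(1-\tau)$. It is attainable because $h$ enters \eqref{eqn:h_1_cyl_p_eig} only on the favorable side, so $b$ can be inflated freely; the only cost is a larger constant in the final exponent.

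A constant $\gamma\equiv2$ is admissible in Proposition \ref{prop:decay_cyl_p_eig}. It satisfies \eqref{eqn:gamma_nondegen}, and the proof only uses $\gamma(R_{k-1})\ge 2$ and the factor $\gamma(R_{k-1})$ in the bound on $\kappa_k^{-1}$. The paper's route is softer, with no constant-chasing. Yours works with geometric radii and shows that, for $\tau<1$, the scheme survives only through the factor $2^{1-\tau}-1$, which vanishes at the threshold $\tau=1$.

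One small slip: with $c_g=1$, the hypothesis \eqref{uBound2Decay} does not yield \eqref{eqn:u_bdd_cyl_p_eig} near $x=0$ when $c_0>1$, since there you would need $1+c_0\le 2$. Take $c_g=\max\set{1,c_0}$ together with $c\ge c_0$.
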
  
	
\begin{proof}
	To apply Proposition \ref{prop:decay_cyl_p_eig}, we first need to choose $\gamma$, $g$, and $h$ and check that conditions \eqref{eqn:vanishing_eig} -- \eqref{eqn:h_1_cyl_p_eig} and \eqref{eqn:u_bdd_cyl_p_eig} hold. 
    If we let $g(R) = e^{c_6 R^{1-\tau}}$ for some $c_6 \ge c_0$ and set $c_g = c_0$, then \eqref{uBound2Decay} implies \eqref{eqn:u_bdd_cyl_p_eig}.
    The condition \eqref{eqn:g_1_crudeBd_eig} holds for all $R \ge 4$ if $c_6(d, m) \gg 1$.
    Let $\gamma(R) = R$ so that $\hat{R} = R^2$ and $\check{R} = \sqrt{R}$. 
    With the notation from \eqref{eqn:phi_tilde_cyl_p}, we have for all $R \ge 4$ that 
	\begin{align}
    \label{eqn:phi_tau1}
		\tilde{\phi}(R) 
        &= \int_{\sqrt{R}/2}^{R} L (1 + s)^{-\tau} \, ds
        \le c_\tau L R^{1-\tau}.
	\end{align}  
    With $c_6 \ge 16^{\tau - 1} \brac{4 \log 4 + \log \pr{\log 4} + \log\pr{5 + \frac{\sqrt \la + c_\tau L}{16 \log 4}}}$, we see that 
    \begin{align*}
        F(\hat R) \log(\ga(R)) + \tilde G(\hat R)
        &\le \pr{1 + R^{8/3} + R^4} \log R
        + \sqrt \la R^2 + c_\tau L R^{2-2\tau} \\
        &\le \exp\set{4 \log R + \log(\log R) + \log\pr{5 + \frac{\sqrt \la + c_\tau L}{16 \log 4}}} \\
        &\le \exp\set{c_6 R^{2(1-\tau)}}
        = g(\hat{R}),
    \end{align*}
    showing that \eqref{eqn:g_1_cyl_p_eig} holds whenever $R \ge 4$.
	Since
    \begin{align*}
        \gamma(R) \log (\hat{R}) g(2 \La \hat{R}) 
        &= 2R \log (R) e^{c_6(2\Lambda )^{1 - \tau} R^{2(1-\tau)}} \\
        &\le \exp\set{2 \Lambda c_6 R^{2(1-\tau)} + \log R + \log(\log R)  + \log 2} \\
        &\le \exp\set{2 \Lambda c_6 R^{2(1-\tau)}} g(\hat R)
    \end{align*}
    by the previous inequality, then with $h(R) = e^{2 \La c_6 R^{1-\tau}}$, \eqref{eqn:h_1_cyl_p_eig} also holds whenever $R \ge 4$.
    Since $\tau < 1$, then
    \begin{align*}
		\lim_{R \ra \infty} \exp\set{(1 + 2\Lambda) c_6 \pr{R^{1-\tau} - R^{2(1-\tau)}} + \tilde c_1 c_\tau L R^{1-\tau} + \log(2R) + \log( \log R)} =  0,
	\end{align*}
    and the asymptotic decay described by \eqref{eqn:vanishing_eig} holds.
    In fact, if $R_0 \ge \max\set{6 \La, \La^2, 4}$ satisfies
    \begin{align*}
		\exp\set{\brac{(1+2\Lambda) c_6 + \tilde c_1 c_\tau L }R_0^{1-\tau} + \log R_0 + \log\pr{\log R_0} }
        \le \frac {\exp\set{(1+2\Lambda) c_6 R_0^{2(1-\tau)}}}{128 \Lambda e^{2 c_1}},
	\end{align*}
    then \eqref{eqn:R0BigCond_eig} holds and we see that $R_0$ depends on $\La$, $c_1$, $\tilde c_1$, $c_6$, $\tau$, and $L$.

    Proposition \ref{prop:decay_cyl_p_eig} shows that there exists $c_3(d, m, \La, L, \tau, c_6, R_0, c_0) > 0$ so that whenever $\abs{x_0} \ge R_0$, it holds that
	\begin{align*}
    	\norm{u}_{L^2(\mathbb{K}_1(x_0))} 
        \ge  \exp \set{ -\exp\brac{\pr{(1 + 2\Lambda) c_6 + \tilde c_1 c_\tau L} \abs{x_0}^{1 - \tau} + \log c_3}},
	\end{align*}
    where we have used estimate \eqref{eqn:phi_tau1}.
    After simplifications, we reach the conclusion described by \eqref{uEstSmallTau}. 
\end{proof}
	
As a second application of Proposition \ref{prop:decay_cyl_p_eig}, we consider the case when $\tau =1$. 
This theorem has three conclusions, depending on the constant $L_2$ below. 
In Section \ref{sec:examples}, we show by example that such a dependence on $L_2$ is necessary (and in some sense sharp).	

\begin{thm}[Rate of decay at infinity for $\tau = 1$]
\label{thm:tau=1}
    For $A \in \mathcal{M}(d,m,\La)$ and $R_1 > 1$, assume that $A$ satisfies the following conditions:
	\begin{equation*}
		\begin{split}
			\abs{\nabla A(\te, x)} & \le L_1 \,\,\,\, \text{ for a.e. } (\te, x) \in \T^d \times \R^m, \\
			\abs{ \nabla A(\te, x)  } & \le \frac{L_2}{\abs{x}} \,\,\, \,  \text{ for a.e. } (\te, x) \in \T^d \times \R^m, \, \abs{x} > R_1.
		\end{split}
	\end{equation*}
    Let $W$ and $V$ satisfy \eqref{eqn:pot_bounds} with $K, M \in \brac{0, 1}$ and define $q$ as 
    \begin{align}
    \label{eqn:q-def}
    	q := \begin{cases}
    		2 & K \ne 0 \\
    		\frac 4 3 & K = 0, M \ne 0  \\
    		1 & K = 0, M = 0
    	\end{cases}.
    \end{align}
    With $\tilde c_1(d, m, \Lambda) > 0$ from Proposition \ref{prop:decay_cyl_p_eig}, define 
    \begin{equation}
        \label{eqn:p-def}
        p := 1 + \tilde c_1 L_2.
    \end{equation}
    For $\la \in \C$, let $u \in H^1_{\loc}(\T^d\times \R^m)$ be a nontrivial solution of \eqref{eqn:eig_eqn0} in $\T^d \times \R^m$ that is normalized so that \eqref{eqn:u_normalized_eig} holds and bounded in the sense that
    \begin{align}
    \label{eqn:u_bdd_2}
		\abs{u(\te, x)} \le  \exp\set{ c_0 \pr{\abs{x}^{q} + 1}} \quad
        \text{ for all } (\te, x) \in \T^d \times \R^m.
    \end{align}
    For any $\eps > 0$, there exists a distance $R_0(d, m, \Lambda, \la, L_2, R_1, p, q, \eps) > 1$ and a constant $c_3(d, m, \Lambda, \la, L_1, L_2, R_1, p, q, \eps, R_0, c_0) > 1$  so that 
    \begin{align}
    \label{eqn:L2Conc}
        \norm{u}_{L^2(\mathbb{K}_1(x_0))} 
        \ge \exp\set{-c_3 \abs{x_0}^{s} E(\abs{x_0})},
    \end{align}
    where $s := \max\set{p, q}$ and
    \begin{align}
    \label{eqn:bigH_defn}
        E(R) := \begin{cases}
            \exp\brac{\eps \pr{\log R}^{\frac {1+\eps} 2}} & p > q \\
            \exp\brac{\pr{p + \eps} \pr{\log R}^{\frac{1+\eps}{2}}} & p = q \\
            \pr{\log R}^{\frac{q}{q - p} + \eps} & p < q
        \end{cases}.
    \end{align}
\end{thm}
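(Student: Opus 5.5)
The plan is to apply Proposition \ref{prop:decay_cyl_p_eig} with $f(s) = L_2/s$ for $s > R_1$; the region $\abs{x} \le R_1$ only affects constants through $L_1$. The first step is to compute $\tilde\phi$. Since $\check R\,\gamma(\check R) = R$,
\[
\tilde\phi(R) = \int_{\check R/2}^{R} \frac{L_2}{s}\,ds = L_2 \log\pr{2\gamma(\check R)} \le L_2\log\pr{2\gamma(R)},
\]
and therefore $e^{\tilde c_1 \tilde\phi(R)} \le (2\gamma(R))^{p-1}$ with $p = 1 + \tilde c_1 L_2$. This is exactly where $p$ enters.

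Next I fix $g$. With $K, M \le 1$ we have $F(R) \lesssim R^{q}$, and $\tilde G(R) = \sqrt\la R + \tilde\phi(R)$ is lower order. So I take $g(R) = c_6 R^{q}\log\pr{\gamma(R) + e}$ with $c_6$ large. Then \eqref{eqn:g_1_crudeBd_eig}, \eqref{eqn:g_1_cyl_p_eig} and, via \eqref{eqn:u_bdd_2}, \eqref{eqn:u_bdd_cyl_p_eig} all hold. The $\log\gamma$ factor only costs something like $(\log R)^{\eps}$ and is harmless.

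Writing $Q = gh$, the remaining conditions are \eqref{eqn:h_1_cyl_p_eig}, which asks roughly that $h(\hat R) \gtrsim \gamma(R)\log \hat R$, and the vanishing condition \eqref{eqn:vanishing_eig}, which asks that
\[
\gamma(R)^{p}\,\log(\hat R)\,\frac{h(R)}{h(\hat R)}\,\gamma(R)^{-q} \longrightarrow 0 .
\]
The conclusion \eqref{eqn:indConclusion_eig} is then a lower bound of the form $\exp\{-c R^{q}\, h(R)\,\gamma(R)^{p-1}\,\log\gamma(R)\}$. The cases differ in how $\gamma$ and $h$ are chosen.

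\emph{Case $p<q$.} Here the factor $\gamma^{p-q}$ helps, and I take $\gamma(R) = (\log R)^{b}$ with $b$ slightly larger than $1/(q-p)$, and $h(R) = \gamma(R)\log R$. Since $h(\hat R)/h(R) \to 1$, the vanishing condition reduces to $(\log R)^{b(p-q)+1} \to 0$, which holds by the choice of $b$. The resulting exponent of $\log R$ is $b(p-1) + b + 1 = bp + 1$, which is arbitrarily close to $q/(q-p)$. This gives $E$ in the case $p<q$.

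\emph{Case $p \ge q$.} I expect this to be the main obstacle: $\gamma$ must grow fast enough that $h(\hat R)/h(R)$ beats $\gamma^{p-q}\log \hat R$, yet slowly enough that $\gamma^{p-1}$ and $\log\gamma$ stay within $E$. I would try $\gamma(R) = \exp\pr{\delta(\log R)^{\beta}}$ with $\beta = (1+\eps)/2$ and $\delta$ small, together with
\[
h(R) = R^{p-q}\exp\pr{a(\log R)^{\beta}}\gamma(R).
\]
Then
\[
\frac{h(\hat R)}{h(R)} = \gamma^{p-q}\,\exp\pr{a\brac{(\log \hat R)^{\beta} - (\log R)^{\beta}}}\cdot\frac{\gamma(\hat R)}{\gamma(R)} ,
\]
and $(\log\hat R)^{\beta} - (\log R)^{\beta} \approx \beta\delta(\log R)^{2\beta-1} = \beta\delta(\log R)^{\eps}$. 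This beats the leftover factor $\log \hat R$ only marginally, so the exponents must be tuned carefully; choosing $\beta$ strictly greater than $\tfrac12$ is exactly what makes $(\log R)^{2\beta-1}$ dominate $\log\log R$. If it fails, I would raise $\beta$ slightly and use the $\eps$-room in $E$. The final bound is then $R^{q}\cdot R^{p-q}$ times $\exp\pr{(a + p\delta)(\log R)^{\beta}}$ up to lower order factors. For $p>q$ I would take $a, \delta \ll \eps$ to obtain $\exp\pr{\eps(\log R)^{(1+\eps)/2}}$. For $p=q$ the $\gamma^{p-q}$ gain disappears, so I would take $\delta = 1$ there, producing the constant $p+\eps$ in $E$.

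In both cases, the last step is to pick $R_0$ so that \eqref{eqn:R0BigCond_eig} holds and to verify \eqref{eqn:gamma_nondegen}. The constant $c_3$ then comes directly from Proposition \ref{prop:decay_cyl_p_eig}.
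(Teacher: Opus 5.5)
Your proposal is correct and follows the paper's route: you apply Proposition \ref{prop:decay_cyl_p_eig} with $\tilde\phi(R)=L_2\log(2\gamma(\check R))$, taking $\gamma=(\log R)^{b}$ with $b$ slightly above $1/(q-p)$ when $p<q$, and $\gamma=\exp(c(\log R)^{(1+\eps)/2})$ when $p\ge q$, exactly as the paper does. The only difference is bookkeeping when $p\ge q$: you keep $g\approx R^q\log\gamma$ and put the factor $R^{p-q}$ into $h$, whereas the paper uses $g=R^p/(\gamma(\check R)^{p-\eps}\log R)$ or $g=R^p\log R$. Since the $\gamma^{p-q}$ factors cancel exactly in your vanishing ratio, your choice $\delta=1$ for $p=q$ is not forced; a small $\delta$ also works there and even yields the sharper factor $\exp(\eps(\log R)^{(1+\eps)/2})$.
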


 \begin{rmk}
     When $p < q$, $R_0$ may be independent of $\la$.
 \end{rmk}

\begin{proof}
	In each case, we appeal to Proposition \ref{prop:decay_cyl_p_eig}.
    Therefore, we need to choose $\gamma$, $g$, and $h$ and check that conditions \eqref{eqn:vanishing_eig} -- \eqref{eqn:h_1_cyl_p_eig} and \eqref{eqn:u_bdd_cyl_p_eig} hold. 
    Since $f(s) = L_2 s^{-1}$ whenever $s > R_1$, recalling \eqref{eqn:phi_tilde_cyl_p}, we have that for all $R > 2 R_1 \ga(2R_1)$,  
	\begin{align}
    \label{eqn:phi_tau_eig}
		\tilde{\phi}(R) 
        = \int_{\frac 1 2 \check{R}}^{R} L_2 s^{-1} ds
        = L_2 \log\pr{2 \ga(\check{R})}.
	\end{align}
     
    First, we consider the case of relatively large $L_2$, i.e., $p := 1 + \tilde c_1 L_2 > q$. 
    Let $\gamma(R) = \exp\brac{\pr{\log R}^{\frac {1+\eps} 2}} = R^{\pr{\log R}^{-\frac {1-\eps} 2}}$ so that $\hat{R} = R \ga(R) = R^{1 + \pr{\log R}^{-\frac {1-\eps} 2}}$. 
    From \eqref{eqn:phi_tau_eig}, whenever $R \ge \max\set{2 R_1 \ga(2R_1), \exp\brac{\pr{\log 2}^{\frac{2}{1 + \eps}}}}$,
    \begin{equation}
    \label{eqn:phi_tau_eig+}
        \tilde{\phi}(R) 
        = L_2 \log\pr{2 \exp\brac{\pr{\log \check R}^{\frac {1+\eps} 2}}}
        \le 2 L_2 \pr{\log \check R}^{\frac {1+\eps} 2}.
    \end{equation}
    Choose $R_0(d, m, \Lambda, \la, R_1, L_2, p, q, \eps, c_1, \tilde c_1) > \max\set{6 \La, \La^2, \exp\brac{\pr{\log 2}^{\frac{2}{1 + \eps}}}, 2 R_1 \ga(2R_1)}$ so that 
    \begin{equation}
    \label{eqn:cond1_+}
        \log \abs{\mathbb{K}_{R_0}} \le \frac{R_0^p}{\ga(\check R_0)^{p-\eps} \log R_0 } ,
    \end{equation}
    \begin{equation}
    \label{eqn:cond2_+}
        R_0^{p-q} 
        \ge \pr{3 + \frac{\sqrt \la}{\hat R^{q-1}} + \frac{2 L_2}{\hat R^q}} \ga(\check R_0)^{p-\eps} \pr{\log R_0}^{\frac{3+\eps}{2}},
    \end{equation}
    and
    \begin{align}
    \label{eqn:van_cond_+}
       \frac{\eps \pr{1 + \eps}\pr{3 + \eps}}{8} \pr{\log R_0}^{\eps} - 4 \log \log R_0 
       \ge 2 c_1 + \log \La + \pr{6 + \tilde c_1 L_2} \log 2.
    \end{align}
    With $\disp g(R) = \frac{R^p}{\ga(\check R)^{p-\eps} \log R }$, the condition in \eqref{eqn:cond1_+} shows that \eqref{eqn:g_1_crudeBd_eig} holds for all $R \ge R_0$.
    From \eqref{eqn:bigF_defn}, \eqref{eqn:tG_defn}, \eqref{eqn:q-def}, and \eqref{eqn:phi_tau_eig+}, we have
    \begin{equation}
    \label{eqn:FGCond+}
        \begin{aligned}
        F(\hat R) \log(\gamma(R)) + \tilde G(\hat R)
        &\le 3 \hat R^q \pr{\log R}^{\frac{1 + \eps}{2}} 
        + \sqrt \la \hat R
        + L_2 \log\pr{2 \ga(R)} \\
        &\le \pr{3 + \frac{\sqrt \la}{\hat R^{q-1}} + \frac{2 L_2}{\hat R^q}} \hat R^q \pr{\log R}^{\frac{1 + \eps}{2}} 
        \le g(\hat R),
        \end{aligned}
    \end{equation}
    where we used the condition in \eqref{eqn:cond2_+}.
    We have shown that \eqref{eqn:g_1_cyl_p_eig} holds for all $R \ge R_0$.
    As
    \begin{align*}
        \frac{\gamma(R) \log (\hat{R}) g(2 \La \hat{R})}{g(\hat{R}) }
        &\le \pr{2 \La }^p \gamma(R) \log (\hat{R}),
    \end{align*}
    then \eqref{eqn:h_1_cyl_p_eig} holds for all $R \ge R_0$ if we define $h(R) = (2 \La)^p \gamma(\check R) \log R$.
    To achieve the vanishing condition, we observe that
    \begin{align*}
		\dfrac{ \gamma(R) \log(\hat R) g(R) h(R) e^{ \tilde c_1 \tilde{\phi}(R)}   }{g(\hat R) h(\hat R) }
        &= 2^{\tilde c_1 L_2 } \dfrac{ \log(\hat R) \gamma(\check R)^\eps }{  \ga(R)^\eps },
	\end{align*}
    where \eqref{eqn:p-def} was used to simplify the expression.
    A Taylor expansion shows that
    \begin{equation}
    \label{eqn:gammaComp}
        \begin{aligned}
        \frac{\ga(R)}{\ga(\check R)}
        &= \exp\set{\pr{\log \check R}^{\frac{1+\eps}{2}}  \brac{\pr{1  + \pr{\log \check R}^{-\frac{1-\eps}{2}}}^{\frac{1+\eps}{2}} - 1}} \\
        &\ge \exp\set{ \frac{\pr{1+\eps}\pr{3 + \eps}}{8} \pr{\log \check R}^{\eps}},    
        \end{aligned}
    \end{equation}
    where we have used that $R_0 \ge e$.
    Since $\log \log R \le 2\log \log \check R$, then
    \begin{align*}
		\dfrac{ \gamma(R) \log(\hat R) g(R) h(R) e^{ \tilde c_1 \tilde{\phi}(R)}   }{g(\hat R) h(\hat R) }
        &\le \dfrac{ 2^{\tilde c_1 L_2}}{\exp\set{ \frac{\eps\pr{1+\eps}\pr{3 + \eps}}{8} \pr{\log \check R}^{\eps} - 4\log \log \check R }}  
	\end{align*}
    and then \eqref{eqn:vanishing_eig} is satisfied.
    In fact, the condition in \eqref{eqn:van_cond_+} shows that \eqref{eqn:R0BigCond_eig} holds.
    Finally, with $c_g = c_0$, the bound in \eqref{eqn:u_bdd_2} implies \eqref{eqn:u_bdd_cyl_p_eig} and Proposition \ref{prop:decay_cyl_p_eig} is applicable.
    That is, there exists a constant $c_3(d, m, \Lambda, L_1, L_2, R_1, p, \eps, R_0, c_0) > 0$ so whenever $\abs{x_0} \ge R_0$, it holds that
	\begin{align*}
		  \norm{u}_{L^2(\mathbb{K}_1(x_0))} 
          \ge  \exp \set{ -c_3 2^{\tilde c_1 L_2 } (2 \La)^p \abs{x_0}^{p} e^{\eps \pr{\log \abs{x_0}}^{\frac {1+\eps} 2}}},
	\end{align*}
    which shows that \eqref{eqn:L2Conc} holds with \eqref{eqn:bigH_defn} for $p > q$.

\hspace{0.5em}

    Next we consider the borderline case where $p := 1 + \tilde c_1 L_2 = q$. 
    As in the previous case, we let $\ga(R) = \exp\brac{\pr{\log R}^{\frac{1+\eps}{2}}}$ so that \eqref{eqn:phi_tau_eig+} holds.
    This time, choose $R_0(d, m, \Lambda, \la, R_1, L_2, p, \eps, c_1, \tilde c_1) > \max\set{6 \La, \La^2, \exp\brac{\pr{\log 2}^{\frac{2}{1 + \eps}}}, 2 R_1 \ga(2R_1)}$ so that 
    \begin{align}
    \label{eqn:cond1_=}
        \log \abs{\mathbb{K}_{R_0}} \le R_0^{p} \log R_0, \qquad
        &\pr{\log R_0}^{\frac{1-\eps}2} \ge 3 + \frac{\sqrt \la}{R_0^{p-1}} + \frac{2 L_2}{R_0^p},
    \end{align}
    \begin{align}
    \label{eqn:van_cond2_=}
       \frac{p \pr{1 + \eps}\pr{3 + \eps}}{8} \pr{\log R_0}^{\eps} - 4 \log \log R_0 
       \ge 2 c_1 + \log \La + \pr{6 + \tilde c_1 L_2} \log 2,
    \end{align}
    and
    \begin{align}
    \label{eqn:cond3_=}
       2\log \pr{\log R_0} \le \eps \pr{\log R_0}^{\frac{1+\eps}2}.
    \end{align}
    With $g(R) = R^p \log R$, the first condition in \eqref{eqn:cond1_=} shows that \eqref{eqn:g_1_crudeBd_eig} holds for all $R \ge R_0$.
    As shown in \eqref{eqn:FGCond+}, since $p = q$,
    $$F(\hat R) \log(\gamma(R)) + \tilde G(\hat R) 
    \le \pr{3 + \frac{\sqrt \la}{\hat R^{p-1}} + \frac{2 L_2}{\hat R^p}} \hat R^p \pr{\log R}^{\frac{1 + \eps}{2}}
    \le g(\hat R),$$
    where we have used the second condition in \eqref{eqn:cond1_=}.
    In particular, \eqref{eqn:g_1_cyl_p_eig} holds for all $R \ge R_0$.
    As
    \begin{align*}
        \frac{\gamma(R) \log (\hat{R}) g(2 \La \hat{R})}{g(\hat{R}) }
        &= (2 \La)^p \gamma(R) \log\pr{2 \La \hat R},
    \end{align*}
    then \eqref{eqn:h_1_cyl_p_eig} holds for all $R \ge R_0$ if we define $h(R) = (2 \La)^p \gamma(\check R) \log\pr{2 \La R}$.
    To achieve the vanishing condition, we recall \eqref{eqn:p-def} and observe that
    \begin{align*}
		\dfrac{ \gamma(R) \log(\hat R) g(R) h(R) e^{ \tilde c_1 \tilde{\phi}(R)}   }{g(\hat R) h(\hat R) }
        &= 2^{\tilde c_1 L_2} \log R  \brac{\dfrac{  \gamma(\check R)}{\ga(R)}}^p \frac{\log\pr{2 \La R}}{\log\pr{2 \La \hat R}} \\
        &\le \dfrac{ 2^{\tilde c_1 L_2}}{\exp\set{ \frac{p \pr{1 + \eps}\pr{3+\eps}}{8} \pr{\log \check R}^{\eps} - 4\log \log \check R }} ,
	\end{align*}
    where we have repeated the computations from \eqref{eqn:gammaComp}.
    Then \eqref{eqn:vanishing_eig} is satisfied and the condition in \eqref{eqn:van_cond2_=} shows that \eqref{eqn:R0BigCond_eig} holds.
    Finally, with $c_g = c_0$, the bound in \eqref{eqn:u_bdd_2} implies that \eqref{eqn:u_bdd_cyl_p_eig} holds and then Proposition \ref{prop:decay_cyl_p_eig} is applicable.
    That is, there exists a constant $c_3(d, m, \Lambda, L_1, L_2, R_1, p, \eps, R_0, c_0) > 0$ so whenever $\abs{x_0} \ge R_0$, it holds that
	\begin{align*}
		  \norm{u}_{L^2(\mathbb{K}_1(x_0))} 
          &\ge  \exp \set{ -c_3 (4 \La)^p \abs{x_0}^{p} e^{p \pr{\log \abs{x_0}}^{\frac{1+\eps}{2}}} \pr{\log\abs{x_0}}^2 } \\
          &= \exp \set{ -c_3 (4 \La)^p \abs{x_0}^{p} e^{\pr{p + \eps} \pr{\log \abs{x_0}}^{\frac{1+\eps}{2}}}},
	\end{align*}
    where we have used \eqref{eqn:cond3_=}.
    This establishes \eqref{eqn:L2Conc} with \eqref{eqn:bigH_defn} for $p = q$.
    
    \hspace{0.5em}

    Finally, we consider relatively small $L_2$ for which $p := 1 + \tilde c_1 L_2 < q$.
    Note that in this case, it must hold that $q \ge \frac 4 3$.
    With $\mu := \frac{1}{q - p} + \frac{\eps}{2p}$, set $\gamma(R) = \pr{\log R}^{\mu}$ so that $\hat{R} = R \pr{\log R}^{\mu}$ and \eqref{eqn:phi_tau_eig} shows that whenever $R > \max\set{2 R_1 \brac{\log \pr{2 R_1}}^\mu, \exp\pr{2^{\frac 1 \mu}}}$,  
	\begin{align}
    \label{eqn:phi_tau_eig-}
		\tilde{\phi}(R) 
        = L_2 \log\brac{2\pr{\log \check{R}}^{\mu}}
        \le 2 \mu L_2 \log (\log \check R).
	\end{align}
    Choose $R_0(d, m, \La, R_1, L_2, p, q, \eps, \mu, c_1, \tilde c_1) > \max\set{6 \La, \La^2, \exp\pr{2^{\frac 1 \mu}}, 2 R_1 \brac{\log \pr{2 R_1}}^\mu}$ so that
    \begin{align}
        \label{eqn:cond1_-}
        \log\pr{\log(2\Lambda R_0)} \le 2 \log(\log R_0), \qquad
        \log\pr{\log R_0} \le \pr{\log R_0}^{\frac {\eps} {2}}
    \end{align}
    and
    \begin{align}
    \label{eqn:cond2_-}
        \pr{\log R_0}^{q - p - \frac{1}{\mu}} 
        \ge \dfrac{\pr{2^{6 + \tilde c_1 L_2} \La e^{2 c_1}}^{\frac 1 \mu} }{1 + \mu \frac{\log(\log R_0)}{\log R_0}  },
	\end{align}
    where the definition of $\mu$ and that $q > p$ ensures that $q - p - \frac{1}{\mu} > 0$.
    With some $c_8 > c_0$ to be determined, define $g(R) \ge c_0 \brac{R^q \log\pr{\log\pr{R + e}} + 1}$ for all $R \ge 0$ so that $g(R) = c_8 R^q \log(\log R)$ when $R \ge R_0$. 
    The bound in \eqref{eqn:u_bdd_2} implies \eqref{eqn:u_bdd_cyl_p_eig} with $c_g = c_0$.
    If $\disp c_8 \ge \frac{\log \abs{\mathbb{K}_{R_0}}}{R_0^q \log(\log R_0)}$, then \eqref{eqn:g_1_crudeBd_eig} holds for all $R \ge R_0$.
    Recall \eqref{eqn:bigF_defn}, \eqref{eqn:tG_defn}, and \eqref{eqn:q-def}, then use \eqref{eqn:phi_tau_eig-} to get
    \begin{align*}
        F(\hat R) \log(\ga(R)) + \tilde G(\hat R)
        &\le 3 \mu \hat R^q \log\pr{\log R} + \sqrt \la \hat R +  2 \mu L_2 \log (\log R) \\
        &\le \pr{3 \mu + \frac{\sqrt \la}{\hat R^{q-1}} + \frac{2 \mu L_2}{\hat R^q} }\hat R^q \log\pr{\log \hat R}.
    \end{align*}
    On the other hand,
    \begin{align}
    \label{eqn:ghatrLower}
        g(\hat{R})
        &= c_8 \brac{ R \pr{\log R}^{\mu}}^q \log(\log \brac{R \pr{\log R}^{\mu}})
        \ge c_8 R^q \pr{\log R}^{q\mu} \log(\log R).
    \end{align}
    Therefore, with $c_8 \ge 3 \mu + \sqrt \la R_0^{1-q} + 2 \mu L_2R_0^{-q}$, \eqref{eqn:g_1_cyl_p_eig} holds for all $R \ge R_0$.
    Using the first condition in \eqref{eqn:cond1_-}, for $R \ge R_0$,
    \begin{align*}
        \frac{\gamma(R) \log (\hat{R}) g(2 \La \hat{R})}{g(\hat{R})} 
        &= \pr{2\Lambda}^q \frac{ \pr{\log R}^{\mu} \log (\hat R)   \log(\log(2\Lambda \hat{R}))}{\log(\log(\hat{R}))} 
        \le 2 (2\Lambda)^{q} \brac{\log(\hat{R})}^{\mu + 1}.
    \end{align*}
    Thus, if we choose $h(R) = 2(2\Lambda)^{q} \pr{\log R}^{\mu + 1}$, \eqref{eqn:h_1_cyl_p_eig} holds for all $R \ge R_0$.
    Next we check the vanishing condition.
    Using \eqref{eqn:p-def}, \eqref{eqn:phi_tau_eig-}, and \eqref{eqn:ghatrLower}, for $R \ge R_0$, we get
    \begin{align*}
        \dfrac{ \gamma(R) \log(\hat R) g(R) h(R) e^{ \tilde c_1 \tilde{\phi}(R)}   }{g(\hat R) h(\hat R)  }
        &\le 2^{\tilde c_1 L_2} \brac{\dfrac{ \pr{\log R}^{p + 1 - q + \frac{1}{\mu}}}{\log R + \mu \log(\log R)}}^{\mu}.
    \end{align*}
    Since the definition of $\mu$ ensures that $p + 1 - q + \frac{1}{\mu} < 1$, we see that
    \begin{align*}
		\lim_{R \ra \infty} \dfrac{ \gamma(R) \log(\hat R) g(R) h(R) e^{ \tilde c_1 \tilde{\phi}(R)}   }{g(\hat R) h(\hat R)  }
        &= 0,
	\end{align*}
    which establishes \eqref{eqn:vanishing_eig}.
    In fact, the bound in \eqref{eqn:cond2_-} implies \eqref{eqn:R0BigCond_eig}. 
    Proposition \ref{prop:decay_cyl_p_eig} shows that there exists a constant $c_3(d, m, \La, L_1, L_2, R_1, \mu, c_8, q, R_0, c_0) > 0$ so whenever $\abs{x_0} \ge R_0$, it holds that
	\begin{align*}
    	  \norm{u}_{L^2(\mathbb{K}_1(x_0))} 
          &\ge  \exp \set{ -c_3 c_8 2^p (2\Lambda)^{q} \abs{x_0}^q \pr{\log \abs{x_0}}^{\mu p + 1} \log(\log \abs{x_0}) } \\
          &\ge  \exp \set{ -c_3 c_8 2^p (2\Lambda)^{q}  \abs{x_0}^q \pr{\log \abs{x_0}}^{\mu p + 1  + \frac{\eps}{2}}},
	\end{align*}
    where we have used the second bound in \eqref{eqn:cond1_-} to reach the second line.
    Since $\mu p + 1 + \frac {\eps} {2} = \frac{q}{q-p} + \eps$, then the conclusion in \eqref{eqn:L2Conc} with \eqref{eqn:bigH_defn} for $p < q$ follows.

\end{proof}

\subsection{Unique continuation for additional decay}

We finish this section with a final application of Proposition \ref{prop:decay_cyl_p_eig} to eigenfunctions of operators with no lower order terms.
But first, we explain why we consider this setting separately.

If $u$ is an eigenfunction, 
\begin{align*}
    -\divv(A \nabla u)  + W \cdot \nabla u + V u = \lambda u \,\, \text{ in } \, \T^d \times \R^m,
\end{align*}
and either $V \not\equiv 0$ or $W \not\equiv 0$, then depending on how $\abs{\nabla A}$ decays at infinity, we can either apply Theorem \ref{thm:tau_betw} or Theorem \ref{thm:tau=1}. 
If $\abs{\nabla A}$ has a relatively slow rate of decay that satisfies the hypotheses of Theorem \ref{thm:tau_betw} or Theorem \ref{thm:tau=1} for $L_2$ large, then in view of the sharp examples in Theorem \ref{thm:eg_1}\footnote{Based on the constructions in \cite{KLP25}, we expect that slight modifications to the arguments in Section \ref{sec:examples} would yield eigenfunctions instead of solutions with compactly supported potentials. }, $u$ could decay as fast as $\exp\{-C \abs{x}^{1 + \tilde{c}_1 L_2 }\} \ll \exp\{-C \abs{x} \}$.
On the other hand, if $\abs{\nabla A} \le L_2(1 + \abs{x})^{-1}$ for $L_2$ sufficiently small, then in view of the examples of Meshkov \cite{M92} and their generalizations in \cite{Dav14}, Theorem \ref{thm:tau=1} shows that the eigenfunction $u$ might decay at the critical rate of $\exp\{ - C \abs{x}^q\}$, where $q > 1$ is defined in \eqref{eqn:q-def}. 

If $V, W \equiv 0$ and $L_2 \ll 1$, Theorem \ref{thm:tau=1} implies that it is possible for $u$ to decay like $\exp\{ - C \abs{x}^p\}$, where $p := 1 + \tilde c_1 L_2$.
When $L_2 = 0$, $p= 1$ and this decay rate is known to be sharp.
However, for $L_2 \ll 1$, one might expect $u$ to decay like $\exp\{ - C \abs{x}\}$, but the results of Theorem \ref{thm:tau=1} do not give such a conclusion.
In the following theorem, under a slightly stronger assumption on the rate of decay of $\abs{\gr A}$, we show that eigenfunctions can decay like $\exp\{ - C \abs{x}\}$.
See Remark \ref{rmk:log_prob} that follows the proof for a discussion of this faster decay rate.

\begin{thm}[Rate of decay at infinity when coefficients have extra decay]
\label{thm:tau=1+}
    For $A \in \mathcal{M}(d,m,\La)$ and $R_1 > 1$, assume that $A$ satisfies the following conditions:
	\begin{equation*}
		\begin{split}
			\abs{\nabla A(\te, x)} & \le L_1 \,\,\,\,\quad\quad\quad \text{ for a.e. } (\te, x) \in \T^d \times \R^m, \\
			\abs{ \nabla A(\te, x)  } & \le \frac{L_2}{\abs{x} \log \abs{x}} \,\,\,\,  \text{ for a.e. } (\te, x) \in \T^d \times \R^m, \, \abs{x} > R_1.
		\end{split}
	\end{equation*}
    For $\la \in \C$, let $u \in H^1_{\loc}(\T^d\times \R^m)$ be a nontrivial solution of 
    $$-\di\pr{A \gr u} = \la u \,\, \text{ in } \, \T^d \times \R^m$$
    that is normalized so that \eqref{eqn:u_normalized_eig} holds and bounded in the sense that 
    \begin{align}
    \label{eqn:eig_bdd}
        \abs{u(\te, x)} \le \exp\{ c_0 (\sqrt{\lambda} + 1) (\abs{x} + 1)  \} \; \;  \text{ for all } (\te, x) \in \T^d \times \R^m.
    \end{align}
    For any $\beta \in (1/2, 1)$, there exists a distance $R_0(d,m,\Lambda, L_2, R_1, \beta ) > 1$ and a constant $c_3(d,m,\Lambda, L_1, L_2, R_1, \beta, R_0, c_0) > 1$ so that whenever $\abs{x_0} \ge R_0$, it holds that
    \begin{align*}
        \norm{u}_{L^2(K_1(x_0))} \ge \exp \{ - c_3 ( \sqrt{\la} + 1)\abs{x_0} \exp \pr{  \log(\abs{x_0})^\beta  }  \log(\abs{x_0})  \}.
    \end{align*}
\end{thm}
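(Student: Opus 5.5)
The plan is to apply Proposition \ref{prop:decay_cyl_p_eig} with $K = M = 0$, so that $F \equiv 1$ in \eqref{eqn:bigF_defn}. What remains is to choose $\gamma$, $g$, $h$ and verify \eqref{eqn:vanishing_eig}--\eqref{eqn:h_1_cyl_p_eig} and \eqref{eqn:u_bdd_cyl_p_eig}. The extra logarithm in the decay of $\abs{\nabla A}$ is what makes $\tilde\phi$ harmless. With $f(s) = L_2/(s\log s)$ for $s > R_1$, definition \eqref{eqn:phi_tilde_cyl_p} gives
\[
\tilde\phi(R) = L_2 \log\frac{\log R}{\log(\check R/2)}.
\]
I will take $\gamma(R) = \exp\brac{(\log R)^{\beta}}$ with $\beta \in (1/2,1)$. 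Then $\log R = \log \check R + (\log \check R)^\beta$, so the ratio above is $1 + O\pr{(\log \check R)^{\beta-1}}$. Hence $\tilde\phi(R) \le 2L_2$ for $R$ large, and $e^{\tilde c_1\tilde\phi}$ is a harmless constant. This is exactly where Theorem \ref{thm:tau=1} loses the power $p = 1 + \tilde c_1 L_2$, and where we now gain.

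Next I choose $g(R) = c_8(\sqrt\la + 1) R$ for $R \ge R_0$, extended suitably below $R_0$ so that \eqref{eqn:eig_bdd} gives \eqref{eqn:u_bdd_cyl_p_eig} with $c_g = c_0$. Condition \eqref{eqn:g_1_crudeBd_eig} holds once $c_8 \gtrsim \log\abs{\mathbb{K}_{R_0}}/R_0$, since $\log\abs{\mathbb{K}_R} \sim m\log R$. For \eqref{eqn:g_1_cyl_p_eig} we need
\[
\log\gamma(R) + \sqrt\la\,\hat R + \tilde\phi(\hat R) \le (\log R)^\beta + \sqrt\la\,\hat R + 2L_2 \le c_8(\sqrt\la + 1)\hat R,
\]
which is immediate for $c_8 \ge 3 + 2L_2$. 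For \eqref{eqn:h_1_cyl_p_eig} the ratio is
\[
\frac{\gamma(R)\log(\hat R)\, g(2\La\hat R)}{g(\hat R)} = 2\La\,\gamma(R)\log\hat R,
\]
so I set $h(R) = 2\La\,\gamma(\check R)\log R$.

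The vanishing condition is the main step. Substituting, the $\sqrt\la + 1$ factors cancel, as do the $\log \hat R$ factors, and the quotient in \eqref{eqn:vanishing_eig} reduces to
\[
e^{2\tilde c_1 L_2}\,\frac{\gamma(\check R)\log R}{\gamma(R)}.
\]
So I need $\gamma(R)/\gamma(\check R) \gg \log R$. By a Taylor expansion as in \eqref{eqn:gammaComp}, writing $t = \log\check R$,
\[
(\log R)^\beta - t^\beta = \pr{t + t^\beta}^\beta - t^\beta \ge \tfrac{\beta}{2}\, t^{2\beta-1}.
\]
This beats $\log\log R \le 2\log t$ precisely because $\beta > 1/2$. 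So \eqref{eqn:vanishing_eig} holds, and choosing $R_0(d,m,\La,L_2,R_1,\beta)$ large gives \eqref{eqn:R0BigCond_eig}. Since $\la$ cancels, $R_0$ is independent of $\la$. I expect this step to be the delicate one: making the expansion and the comparison $\log\log R \lesssim \log\log\check R$ quantitative, and tracking the lower-order terms in the bound on $\tilde\phi$.

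Finally, \eqref{eqn:indConclusion_eig} yields
\[
\norm{u}_{L^2(\mathbb{K}_1(x_0))} \ge \exp\set{-c_3\, g h\, e^{\tilde c_1\tilde\phi}} \ge \exp\set{-C(\sqrt\la + 1)\abs{x_0}\exp\brac{(\log\abs{x_0})^\beta}\log\abs{x_0}},
\]
using $\gamma(\check R) \le \gamma(R)$. This is the claimed bound.
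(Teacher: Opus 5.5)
Your proposal is correct and follows the paper's proof essentially step for step. You use the same $\gamma(R)=\exp[(\log R)^\beta]$, a linear $g\propto(\sqrt{\la}+1)R$, the bound $\tilde\phi\le 2L_2$, and the same key expansion $(t+t^\beta)^\beta-t^\beta\ge\tfrac{\beta}{2}t^{2\beta-1}$, which beats the $\log\log R$ loss precisely because $\beta>1/2$. The only difference is cosmetic: your $h(R)=2\La\gamma(\check R)\log R$ makes \eqref{eqn:h_1_cyl_p_eig} an equality, whereas the paper's $h(R)=4\La\gamma(R)\log R$ just shifts the vanishing ratio one step, from your $\gamma(\check R)\log R/\gamma(R)$ to $\gamma(R)\log R/\gamma(\hat R)$.
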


\begin{proof}

 Once again, we want to apply Proposition \ref{prop:decay_cyl_p_eig}, so we need to define $\gamma$, $g$, and $h$ and check the conditions \eqref{eqn:vanishing_eig} -- \eqref{eqn:h_1_cyl_p_eig} and \eqref{eqn:u_bdd_cyl_p_eig}. 
 For $\beta \in (1/2, 1)$ and $R > 1$, define $\gamma(R) = \exp\brac{\pr{\log R}^\beta}$.
 Choose $R_0(d, m, \Lambda , L_2,  R_1, \beta ) > \max \{6 \Lambda, \Lambda^2, 2 R_1 \gamma(2R_1), 8 \gamma(8)     \}$ so that the following conditions hold for all $R \ge R_0$
 \begin{align}
     & \log( \abs{\mathbb{K}_R}  ) \le R \label{eqn:qq1} \\
     & (\log R)^\beta + 2L_2 \le R \label{eqn:qq2} \\
     & 1 + \frac{\beta}{2} (\log R)^{\beta-1} \le \brac{1 + (\log R)^{\beta-1}}^\beta  \label{eqn:qq3} \\
     & 2 \tilde{c}_1 L_2 + 2c_1 \log(\Lambda) + \log(64) + \log(\log R)  \le \frac{\beta}{2} (\log R)^{2\beta -1} \label{eqn:qq5}
 \end{align}

First of all, notice that for $R \ge 8$,
\begin{align*}
    \log(\hat{R}) 
    = \log R + (\log R)^\beta 
    \le 2 \log R 
    \le 3 \log(R/2),
\end{align*}
so that $\log(R)/\log(\check{R}/2) \le 3$ for $R \ge R_0$. 
Since $f(s) = L_2/(s\log s)$ for $s > R_1$, then recalling \eqref{eqn:phi_tilde_cyl_p}, we see that 
\begin{align}
\label{eqn:qq4}
    \tilde{\phi}(R)  
    \le \int_{\frac{1}{2} \check{R}}^R \frac{L_2}{s\log s} \; ds 
    = L_2 \log\brac{\log R/\log(\check{R}/2  )}  
    \le \log 3 L_2 
    < 2L_2 .
\end{align}
In particular, $\tilde \phi$ is bounded for $R \ge R_0$. 

Define $g(R) = (\sqrt{\la} + 1) R$.
With the choice of $g(R)$, we have that \eqref{eqn:g_1_crudeBd_eig} is a consequence of \eqref{eqn:qq1}.
Since $F(\hat{R}) = 1$, \eqref{eqn:qq4} and \eqref{eqn:qq2} show that
 \begin{align*}
     F(\hat{R}) \log(\gamma(R)) + \sqrt{\lambda}\hat{R} + \tilde{\phi}(\hat{R}) 
     \le  (\log\hat{R})^\beta
 + \sqrt{\lambda }\hat{R} + 2L_2 \le (\sqrt{\la} + 1) \hat{R} = g(\hat{R}),
\end{align*}
which gives \eqref{eqn:g_1_cyl_p_eig}. 
As for the estimate \eqref{eqn:h_1_cyl_p_eig}, we verify that whenever $R \ge R_0$,
\begin{align*}
    \gamma(R) \log(\hat{R}) g(2\Lambda \hat{R}) \le 4 \Lambda \gamma(R) \log(R) g(\hat{R}),
\end{align*}
so taking $h(R) \coloneqq 4 \Lambda \gamma(R) \log(R)$ for $R \ge R_0$ and using that $h$ is increasing, we see \eqref{eqn:h_1_cyl_p_eig} holds with this choice of $h$.

Finally, we address the vanishing condition \eqref{eqn:vanishing_eig}. 
Using the definitions of $g$, $h$, and \eqref{eqn:qq4}, for $R \ge R_0$, 
\begin{align*}
    \frac{\gamma(R) \log(\hat{R}) g(R) h(R) e^{\tilde{c}_1 \tilde{\phi}(R)}}{g(\hat{R}) h(\hat{R})} 
    & \le  e^{2 \tilde{c}_1 L_2} \frac{ \gamma(R) \log R }{ \gamma(\hat{R})  } . 
\end{align*}
From \eqref{eqn:qq3}, we obtain the lower bound
\begin{align*}
    \gamma(\hat{R}) 
    = \exp \{  \brac{ \log R + (\log R)^\beta }^\beta   \}  
    \ge \exp \left \{   (\log R)^\beta + \frac{\beta}{2} (\log R)^{2\beta -1} \right \},
\end{align*}
so that \eqref{eqn:qq5} then gives
\begin{align*}
    e^{2 \tilde{c}_1 L_2}\frac{\gamma(R)\log R}{\gamma(\hat{R})} 
    \le \exp \left \{ 2 \tilde{c}_1 L_2 + \log(\log R) - \frac{\beta}{2} (\log R)^{2\beta -1}   \right \} 
    \le \frac{1}{64 \Lambda e^{2c_1}}.
\end{align*}
In particular, we see that \eqref{eqn:R0BigCond_eig} holds, and in addition, since $2 \beta  > 1$, that \eqref{eqn:vanishing_eig} holds as well. 
Thus with the bound \eqref{eqn:eig_bdd} and the estimate \eqref{eqn:qq4}, Proposition \ref{prop:decay_cyl_p_eig} then gives that as long as $\abs{x_0} \ge R_0$, then 
\begin{align*}
    \norm{u}_{L^2(K_1(x_0))} 
    \ge \exp \left \{  -4 \La c_3  e^{2 \tilde{c}_1 L_2} (\sqrt{\lambda} + 1)\abs{x_0} e^{(\log\abs{x_0})^\beta } \log 
   \abs{x_0} \right\},
\end{align*}
with $c_3(d,m,\Lambda, L_1, L_2, R_1, \beta, R_0, c_0)  >0$.
(An inspection of the proof of Proposition \ref{prop:decay_cyl_p_eig} shows that we can take $c_3$ independent of $\la$ even though $g$ depends on $\la$.)
This is our desired lower bound.
\end{proof}

\begin{rmk}
\label{rmk:log_prob}
    Given that the decay of $\abs{\nabla A}$ assumed in Theorem \ref{thm:tau=1+} is faster than that assumed in Theorem \ref{thm:tau=1}, the curious reader may wonder if the conclusion of Theorem \ref{thm:tau=1+} holds, i.e., that eigenfunctions decay at worst like  $\exp\{  -C\abs{x}f(\abs{x})\}$ for some subpolynomial function $f$, if we instead assume that for $L_2$ sufficiently small,
    \begin{align}\label{eqn:log_prob_alt}
        \abs{\nabla A(\te, x)} \le \frac{L_2}{1 + \abs{x}}
        \,\,\,\, \text{ for a.e. } (\te, x) \in \T^d \times \R^m.
    \end{align}
    It is unclear whether such a result is true, and at least with the techniques of this paper (namely, the technology of Proposition \ref{prop:decay_cyl_p_eig}), it does not seem possible to prove. 
    The main obstruction is that the critical exponent $q$ from \eqref{eqn:q-def} is not \textit{strictly} larger than $1$ when $W \equiv V \equiv 0$. 
    We elaborate on this below. 

    Assume the hypotheses of Theorem \ref{thm:tau=1+} with the decay condition \eqref{eqn:log_prob_alt}.
    Suppose that $\gamma$, $g$, and $h$ are functions that we can use in Proposition \ref{prop:decay_cyl_p_eig} to conclude that the eigenfunction $u$ decays at most like $\exp\{- C \abs{x}^{1 + \delta}\}$ for some $\delta >0$. 
    Then we must have $g(R)h(R) e^{\tilde c_1 \tilde \phi(R)} \approx R^{1+\de}$.
    Since $\tilde{\phi}(R) = L_2 \log(2\gamma(\check{R}))$ in this setting, then $e^{\tilde{c}_1 \tilde{\phi}(R)} = (2\gamma(\check{R}))^{\tilde{c_1} L_2}$ and we conclude that 
    $$g(R)h(R) \gamma(\check{R})^{\tilde{c_1} L_2} \approx R^{1+\de}.$$
    For Proposition \ref{prop:decay_cyl_p_eig} to be applicable, we need $\ga$, $g$, and $h$ to satisfy
    \begin{align*}
        \limsup_{R \ra \infty } \frac{\gamma(R) \log(\hat{R}) g(R) h(R) \gamma(\check{R})^{\tilde{c}_1 L_2}}{g(\hat{R}) h(\hat{R})} = 0.
    \end{align*}
    However,
    \begin{align*}
        \frac{\gamma(R) \log(\hat{R}) g(R) h(R) \gamma(\check{R})^{\tilde{c}_1 L_2}}{g(\hat{R}) h(\hat{R})} 
        &= \gamma(R)^p \log(\hat{R})  \frac{g(R) h(R) \gamma(\check{R})^{\tilde{c}_1 L_2}}{g(\hat{R}) h(\hat{R}) \gamma(R)^{\tilde{c}_1 L_2}} \\
        &\approx \gamma(R)^p \log(\hat{R})  \frac{R^{1+\de}}{\hat R^{1+\de}}
        = \frac{\log (\hat R)}{\ga(R)^{1 + \de - p}}.
    \end{align*}
    Since $\ga$ must be non-decreasing, we have to choose $\de > p - 1$.
    In particular, we cannot make $\de$ arbitrarily close to $0$.
    
\end{rmk}

\section{Examples of solutions with rapid decay} 
\label{sec:examples} 

Here we construct examples of operators and global solutions $u$ to \eqref{eqn:soln_cyl} that demonstrate the sharpness of the lower bounds described by Theorems \ref{thm:tau_betw} and \ref{thm:tau=1}. 

Throughout this section, we focus on the $3$-dimensional domain $\mathbb{T}^2 \times \R$, i.e., we fix $d = 2$, $m = 1$.
We use $(\te, \vp)$ to denote the coordinates in $\mathbb{T}^2$ and $x$ to denote the points in $\R$.
In all of our examples, $A_2 = 1$, so our coefficient matrices belong to $\mathcal{M}_0(2, 1, \La)$, see Definition \ref{defn:matClass}.

The following theorem describes these examples.

\begin{thm}[Sharp examples]
\label{thm:eg_1}
	For every $\epsilon \in (0,1]$, there exists a coefficient matrix $A \in \mathcal{M}_0(2, 1, 75)$, a bounded, compactly supported potential function $V$, and a real-valued, nontrivial solution $u$ to the equation
	\begin{align}
    \label{eqn:cyl_soln}
		\divv(A \nabla u ) = Vu \,\, \text{ in } \, \T^2 \times \R,
	\end{align}
	with the property that for every $(\te, \vp, x) \in \mathbb{T}^2 \times \R$,
	\begin{align}
    \label{eqn:egest1}
		 \abs{\nabla A(\te, \vp, x)} \le \dfrac{C_\epsilon}{(1+\abs{x})^{ 1 -  \epsilon}} , \qquad  
         \abs{u(\te, \vp, x)} \le c \exp\set{- \exp\pr{  C  \abs{x}^{\epsilon } }}. 
	\end{align}
    Similarly, for every $\epsilon \in (0, \tfrac 1 2]$, there exists $A \in \mathcal{M}_0(2, 1, 75)$, a bounded, compactly supported $V$, and a real-valued, nontrivial solution $u$ to \eqref{eqn:cyl_soln} with the property that for every $(\te, \vp, x) \in \mathbb{T}^2 \times \R$,
	\begin{align}
    \label{eqn:egest2}
		\abs{\nabla A (\te, \vp, x)} \le \dfrac{C}{\epsilon (1+\abs{x})}, \qquad 
        \abs{u(\te, \vp, x)} \le c \exp\pr{ -C_\epsilon  \abs{x}^{\frac 1 \epsilon} } .
	\end{align}
\end{thm}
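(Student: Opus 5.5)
The construction glues together ``building blocks'' along the $x$-axis. The key tool is the transformation lemma (Lemma~\ref{lemma:transformation_block}, following Miller/Mandache/\cite{KLP25}). Given $k$ and a length $T \gtrsim 1$, it produces $A$ on $\T^2 \times [0,T]$ with $A^{(2)} = 1$, $A^{(1)}$ uniformly elliptic with a constant independent of $k, T$, and $\abs{\nabla A} \le C T^{-1}$. It also produces a harmonic $w$ ($\divv(A\nabla w) = 0$) that coincides with $\cos(k\te)e^{-kx}$ near $x=0$ and with a multiple of $\cos(2k\vp)e^{-2kx}$ near $x=T$. Alternating the roles of $\te$ and $\vp$ lets the blocks be iterated.

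I would prove the lemma as in \cite{KLP25}. On $[0,T]$, split into three phases. First, rotate from $\cos(k\te)$ to a mixture using $A^{(1)}$ depending on $x$ only. Second, use a metric on $\T^2$ depending smoothly on $x/T$ that deforms the eigenfunction $\cos(k\te)$ with eigenvalue $k^2$ into $\cos(2k\vp)$ with eigenvalue $4k^2$. Third, solve $\partial_x^2 w = -\Delta_{A^{(1)}}w$ with a WKB-type ansatz $w = \psi_x(\te,\vp)\exp(-\int_0^x \sqrt{\lambda(s)}\,ds)$. The error created by the slow variation is then absorbed by a correction supported in the block, which is where the potential $V$ comes in (compactly supported in each block, or more precisely confined to a compact gluing region near the origin).

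Rather than fuss over this, I would arrange the building block to be exact after replacing $A$ by a conjugated conformal metric, and accept that $V$ is only needed near $x=0$. There, the solution for $x<0$ is matched to a compactly supported cutoff of an explicit function, and $V = \divv(A\nabla u)/u$ where $u \neq 0$.

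Next comes the global assembly for $x \ge 0$ (and by reflection for $x \le 0$). Choose lengths $T_j$ and frequencies $k_j = 2^j$, and place block $j$ on $[X_j, X_{j+1}]$ with $X_{j+1} = X_j + T_j$. Since $\abs{\nabla A} \lesssim T_j^{-1}$ on block $j$, the requirement $\abs{\nabla A} \lesssim (1+x)^{-(1-\eps)}$ forces $T_j \gtrsim X_j^{1-\eps}$. Take $T_j \simeq X_j^{1-\eps}$, so that $X_j \simeq j^{1/\eps}$. The solution on block $j$ has size roughly $\exp(-\sum_{i<j} k_i T_i) \approx \exp(-2^j X_j^{1-\eps})$. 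In terms of $x \simeq X_j$, this gives $j \simeq x^\eps$ and hence $\abs{u} \le \exp\{-\exp(C x^\eps)\}$. This proves \eqref{eqn:egest1}; one must also check that $\nabla u$ is controlled and that the normalizing multiples $e^{-k_j X_j}$ match.

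For \eqref{eqn:egest2}, take $T_j = c\,\eps X_j$, i.e.\ geometric growth $X_{j+1} = (1 + c\eps)X_j$, so that $\abs{\nabla A} \le C/(\eps x)$. Then $X_j \simeq (1+c\eps)^j$, and $k_j = 2^j \simeq X_j^{\log 2/\log(1+c\eps)} \gtrsim X_j^{1/\eps}$ after adjusting $c$. The exponent $\sum_i k_i T_i \simeq k_j X_j$ gives $\abs{u} \le \exp(-C_\eps x^{1/\eps})$. An alternative for the frequency doubling is to double only every few blocks so that the exponent is exactly $1/\eps$.

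The main obstacle is the building-block lemma with uniform ellipticity ($\La = 75$) and the sharp gradient bound $C/T$ independent of $k$. Reconciling eigenvalue change from $k^2$ to $4k^2$ with slow variation requires the exact separation-of-variables trick of \cite{KLP25}. The tool there is a family of metrics on $\T^2$ for which $\cos(k\te)$ and $\cos(2k\vp)$ are connected through a single eigenfunction branch. I would import that construction essentially verbatim.
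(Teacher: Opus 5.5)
\textbf{Where your proposal agrees with the paper.} Your global assembly is the paper's:
\begin{itemize}
\item frequencies $k_j=2^jk_0$;
\item polynomial endpoints $X_j\simeq j^{1/\eps}$, so that $\abs{\gr A}\lesssim T_j^{-1}\lesssim x^{\eps-1}$ and the decay exponent satisfies $\sum_j k_jT_j\ge k_nT_n\gtrsim 2^n$ with $n\simeq x^\eps$;
\item geometric endpoints for the second family (your choice with $\log 2/\log(1+c\eps)\ge 1/\eps$ overshoots the required power, which is harmless);
\item reflection in $x$, which is legitimate because $A$ has no $x$--$(\te,\vp)$ cross terms;
\item a potential only near the origin.
\end{itemize}
Two small points. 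First, $V=\divv(A\gr u)/u$ ``where $u\neq 0$'' is bounded only because you keep the separated form $u_0=\cos(k_0\te)f(x)$ with $f>0$ on $[-1,1]$. Then $V_0=f''/f-k_0^2$, which is independent of $\te$; say so explicitly. Second, gluing blocks with the roles of $\te$ and $\vp$ alternating needs $A\equiv I$ near both ends of each block. It also needs $k_0$ chosen from $T_1$ so that $kT\gg 1$ in every block. Both are in the statement of Lemma~\ref{lemma:transformation_block}, and if you simply invoke that lemma as stated, the theorem follows.

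\textbf{The gap: the building block.} The genuine gap is your plan for proving the building block itself, which is where the content lies. A WKB ansatz $w=\psi_x e^{-\int\sqrt{\lambda}}$ with an $x$-dependent metric is not an exact solution. Its error is of size about $k/T$ times the amplitude of $w$, and it is not a multiple of $w$. Moving this error into a potential fails on three counts:
\begin{itemize}
\item $V$ would be present in every block, so it is not compactly supported;
\item $V$ would have size about $k_j/T_j\to\infty$, since in both of your spacings $T_j$ grows slower than $2^j$;
\item $E/w$ in general blows up on the nodal set of $w$.
\end{itemize}
Your fallback, ``make the block exact via a conjugated conformal metric,'' names no mechanism.

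\textbf{The missing idea (Miller's trick, as carried out in the paper).}
\begin{enumerate}
\item Deform $A^{(1)}$ from $I$ to $\mathrm{diag}(1,\tfrac14)$. This leaves $\cos(k\te)e^{-kx}$ a solution and makes $\cos(2k\vp)e^{-kx}$ a solution as well. The two modes then share the eigenvalue $k^2$, so no eigenvalue branch has to be followed.
\item Fade the second mode in and the first out with cutoffs. The cutoff errors are proportional to $(2k\eta'-\eta'')\cos(2k\vp)e^{-kx}$ (resp.\ $\cos(k\te)e^{-kx}$). They are cancelled \emph{identically} by $O((kT)^{-1})$ perturbations of $A^{(1)}$ with $\te,\vp$-dependent entries such as $b\propto\sin(k\te)\sin(2k\vp)$, chosen via identities like $\sin^2+\cos^2=1$ against the gradient of the mode still fully present.
\item Only afterwards change the decay rate of the single mode $\cos(2k\vp)e^{-k\psi(x)}$ from $k$ to $2k$. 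This step is exact with $d_4=\tfrac14\bigl((\psi')^2-\psi''/k\bigr)$, and it is what forces $\La=75$.
\end{enumerate}
The bound $\abs{\gr A}\le C_1T^{-1}$ is uniform in $k$ because of the prefactor $(2k\eta'-\eta'')/k^2$: each $\te$ or $\vp$ derivative costs only one factor of $k$.
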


To prove this theorem, we rely on the following proposition.

\begin{prop}[Sharp examples on half-space]
\label{prop:eg_half}
	For every $\epsilon \in (0,1]$, there exists a coefficient matrix $A \in \mathcal{M}_0(2, 1, 75)$ and a real-valued, nontrivial solution $u$ to 
	\begin{align}
    \label{eqn:halfcyl_soln}
		\divv(A \nabla u ) = 0 \,\, \text{ in } \, \T^2 \times (1, \iny),
	\end{align}
	with the property that for every $(\te, \vp, x) \in \mathbb{T}^2 \times [1, \iny)$,
	\begin{align}
    \label{eqn:egest3}
		 \abs{\nabla A(\te, \vp, x)} \le \dfrac{C_\epsilon}{x^{ 1 -  \epsilon}} , \qquad  
         \abs{u(\te, \vp, x)} \le c \exp\set{- \exp\pr{ C x^{\epsilon } }}. 
	\end{align}
    Similarly, for every $\eps \in (0, \tfrac 1 2]$, there exists $A \in \mathcal{M}_0(2, 1, 75)$ and a real-valued, nontrivial solution $u$ to \eqref{eqn:halfcyl_soln} with the property that for every $(\te, \vp, x) \in \mathbb{T}^2 \times [1, \iny)$,
	\begin{align}
    \label{eqn:egest4}
		\abs{\nabla A (\te, \vp, x)} \le \dfrac{C}{\eps x}, \qquad 
        \abs{u(\te, \vp, x)} \le c \exp\pr{ - C_\eps x^{\frac 1 \eps} } . 
	\end{align}
    In both cases, there exist constants $k_0 \in \N$ and $\de \in (0, 1)$ so that for every $(\te, \vp, x) \in \mathbb{T}^2 \times \brac{1, 1+\de}$,
    \begin{align}
    \label{eqn:localBehav}
        A(\te, \vp, x) \equiv I, \qquad 
        u(\te, \vp, x) \equiv \cos(k_0 \te) e^{-k_0 x} .
    \end{align}    
\end{prop}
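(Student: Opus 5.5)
The plan is to glue together infinitely many copies of the ``building block'' transformation mentioned in the introduction (Lemma \ref{lemma:transformation_block}, whose statement we take as given). For a frequency $k$ and a length $T$, this block gives a matrix $A_{k,T} \in \mathcal{M}_0(2,1,75)$ on $\T^2 \times [0,T]$ with $\abs{\nabla A_{k,T}} \le C T^{-1}$ (with $C$ independent of $k$). It also gives a solution $w$ of $\divv(A_{k,T}\nabla w)=0$ with $A_{k,T}\equiv I$ and $w = \cos(k\te)e^{-kx}$ near $x=0$, and $A_{k,T}\equiv I$ and $w = \cos(2k\vp)e^{-2kx}$ near $x=T$. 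Using the $\te \leftrightarrow \vp$ symmetry, we also get a block that sends $\cos(k\vp)e^{-kx}$ to $\cos(2k\te)e^{-2kx}$.

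\textbf{Construction.} Fix $k_0$ and lengths $T_j$, and set $x_0 = 1$, $x_{j+1} = x_j + T_j$, $k_j = 2^j k_0$. On $[x_j, x_{j+1}]$ we place a translated block with frequency $k_j$, alternating the roles of $\te$ and $\vp$. Since each block is a solution of a homogeneous equation with $A\equiv I$ near its ends, we rescale the $j$-th block solution by a constant $a_j$ so that consecutive pieces agree on a neighbourhood of $x_j$. Concretely, the block on $[x_j,x_{j+1}]$ built from $e^{-k_j(x-x_j)}$ is multiplied by $a_j = e^{-k_j x_j}$; then, near $x_{j+1}$, it equals $e^{-k_j x_j} e^{-2k_j(x - x_j)}$ times a cosine. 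With the normalisation $e^{-k_{j+1}(x-x_{j+1})} a_{j+1}$ and $a_{j+1}=a_j e^{-2k_jT_j}$, the pieces match. Because the pieces coincide on open neighbourhoods of the junctions, the glued $u$ is $H^2_{\loc}$ and solves \eqref{eqn:halfcyl_soln}, and $A$ is globally Lipschitz with $A\equiv I$ near each junction. Property \eqref{eqn:localBehav} holds by construction on $[1,1+\de]$. Uniform boundedness of the interior values of $w$ in each block, relative to the endpoint profiles (say $\abs{w}\le C e^{-k(x - x_j)}$, which should follow from the block construction, e.g.\ by the maximum principle), gives $\abs{u} \lesssim e^{-k_j x_j}$ on $[x_j,x_{j+1}]$.

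\textbf{Choice of lengths.} The gradient bound requires $T_j^{-1} \lesssim f(x_j)$, while the decay is governed by $k_j x_j \approx 2^j x_j$.
\begin{itemize}
\item For \eqref{eqn:egest3}, take $T_j \simeq x_j^{1-\eps}$, so that $\abs{\nabla A}\lesssim x^{\eps-1}$ on block $j$; comparability of $x$ across a block holds since $T_j \ll x_j$ for large $j$, or $T_j\lesssim x_j$ in general. Then $x_{j+1}^\eps - x_j^\eps \approx \eps$, so $x_j \approx (\eps j)^{1/\eps}$, i.e.\ $j \approx x^\eps/\eps$. Hence $\abs{u} \le \exp(-k_0 2^{j} x_j) \le \exp(-\exp(C x^\eps))$.
\item For \eqref{eqn:egest4}, take $T_j = \eps' x_j$ with $\eps' \simeq \eps$, so that $x_j = (1+\eps')^j$ and $\abs{\nabla A}\le C/(\eps x)$. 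Then $2^j = x_j^{\log 2/\log(1+\eps')} \ge x_j^{1/\eps}$, up to adjusting constants (this is where $\eps \le \tfrac12$ is used, to choose $\eps'$ with $\log 2/\log(1+\eps') \ge 1/\eps$). This gives $\abs{u}\le \exp(-C_\eps x^{1/\eps})$.
\end{itemize}
Nontriviality is clear from \eqref{eqn:localBehav}.

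\textbf{Main obstacle.} The delicate step is quantitative control of the block. We need the gradient bound $C/T$ to be uniform in $k$, and we need an interior bound on $w$ like $\abs{w} \lesssim e^{-k(x-x_j)}$ (or at least $e^{-c k_j x_j}$) to be uniform in $T$. These are precisely what Lemma \ref{lemma:transformation_block} must supply. If only a cruder bound such as $\abs{w}\le C$ times the initial profile is available, the decay estimate still follows, since $e^{-k_jx_j}$ already dominates.
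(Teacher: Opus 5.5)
Your construction is the paper's: blocks from Lemma~\ref{lemma:transformation_block} with $k_j = 2^j k_0$, alternating $\te$ and $\vp$, glued by constant multipliers, with $T_j \simeq x_j^{1-\eps}$ or geometric $x_j$. However, your bookkeeping of the multipliers is wrong, and so is the decay estimate built on it. The two conditions you impose, $a_j = e^{-k_j x_j}$ and $a_{j+1} = a_j e^{-2k_j T_j}$, are incompatible; together they force $x_j = 0$. The matching recursion is the correct one, and with $a_0 = e^{-k_0 x_0}$ it gives
\[
a_j = a_0 \exp\Bigl(-\sum_{i<j} k_{i+1} T_i\Bigr) = \exp\Bigl(k_0 \sum_{i<j} 2^i x_i - k_j x_j\Bigr),
\]
which is exactly the paper's normalization of $u_n$. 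Hence on $[x_j, x_{j+1}]$ you only get $\abs{u} \le 2 \exp\bigl(k_0\sum_{i<j} 2^i x_i - k_j x\bigr)$. This exceeds your claimed $e^{-k_j x_j}$ by the factor $\exp(k_0 \sum_{i<j} 2^i x_i)$, whose exponent is comparable to $k_j x_j$ itself. So the heuristic ``decay is governed by $k_j x_j$'' is false. The decay is governed by $\sum_{i<j} k_{i+1} T_i \approx 2^j T_{j-1}$, i.e.\ by the cancellation in $2^j x - \sum_{i<j} 2^i x_i$. Controlling this cancellation is precisely what the paper's estimates $\sum_{j \le n-2} 2^j x_j \le 2^{n-1} x_{n-1} - 2^{n-1}$ and $\sum_{j\le n-2} 2^j x_j < 2^{n-1} x_{n-1}/(1+2\ga)$ are for.

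The fix is short, and your choices of lengths survive it. Since $\sum_{i<j} 2^i x_i \le (2^j - 1) x_{j-1}$, for $j \ge 1$ and $x \in [x_j, x_{j+1}]$ you have $2^j x - \sum_{i<j} 2^i x_i \ge 2^j (x_j - x_{j-1}) = 2^j T_{j-1}$, so $\abs{u} \le 2 \exp(-k_0 2^j T_{j-1})$.
\begin{itemize}
\item In the first family, $T_{j-1} \gtrsim 1$ and $2^j \gtrsim \exp(c x^\eps)$, which gives \eqref{eqn:egest3}.
\item In the second family, $T_{j-1} = \eps' x_{j-1} \ge \eps' x/(1+\eps')^2$ and $2^{j+1} \ge x^{\log 2/\log(1+\eps')}$. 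This gives \eqref{eqn:egest4}, in fact with a larger power, and it does not actually need $\eps \le \tfrac12$.
\end{itemize}
One further detail is missing. The lemma provides a threshold $k_0(T)$ for each $T$, but you need a single $k_0$ valid for all blocks. As in the paper, use that the block construction only requires $kT \gg 1$. Then the threshold for the shortest length $T_0$ works for every $T_j \ge T_0$, provided the $T_j$ are nondecreasing.
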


The two results described by Theorem \ref{thm:eg_1} and Proposition \ref{prop:eg_half} are very similar, so we point out the main differences.
First, Theorem \ref{thm:eg_1} holds on the full space $\mathbb{T}^2 \times \R$ while Proposition \ref{prop:eg_half} is for the half-space $\mathbb{T}^2 \times (1, \iny)$.
Second, Theorem \ref{thm:eg_1} holds with a compactly supported potential function $V$ while we take $V \equiv 0$ in Proposition \ref{prop:eg_half}. 
By Harnack's inequality, we know that we cannot construct a bounded, non-constant solution in the full space $\T^2 \times \R$ without a nonzero lower order term $V$.

The main tool that we use to prove Proposition \ref{prop:eg_half} is the following construction block (which is largely inspired by the work of Miller, and the recent work of \cite{KLP25}).
We leave the proof of this lemma until the end of the section.

\begin{lemma}[Building blocks]
\label{lemma:transformation_block}
	For any $T > 0$, there exists $k_0(T) \in \N$ so that for any $k \in \N_{\ge k_0}$, there exists $A = A(\te, \vp, x; k) \in \mathcal{M}_0(2, 1, 75)$ that transforms the function $\cos(k\te) e^{-kx}$ into the function $\cos(2k\vp) e^{-2kx}$ on the interval $[0, T]$. 
    That is, there exists a real-valued solution $u = u(\te, \vp, x; k)$, defined on $\T^2 \times [0, T]$, that satisfies 
	\begin{align*}
	    \divv(A \nabla u) = 0 \text{ in } \T^2 \times (0, T),
	\end{align*}
    where
	\begin{align}
    \label{eqn:uPiecewise}
		u(\te, \vp, x; k) = \begin{cases}
			\cos(k\te) e^{-kx} & x \in [0, \frac 1 4 T] \\
			\cos(2k\vp) e^{-2kx} & x \in [\frac{3}4 T,  T]
		\end{cases}
	\end{align}
	and there exists universal $C_1 > 0$ so that for each $(\te, \vp, x) \in \T^2 \times [0,  T]$, 
    \begin{equation}
    \label{eqn:AuBlockBounds}
        \begin{aligned}
        &\abs{\nabla A (\te, \vp, x)} \le C_1 T^{-1} \\
        &\abs{u(\te, \vp, x)} \le 2 e^{- kx}.
        \end{aligned}
    \end{equation}
	Moreover, $A \equiv I$ in $\disp \mathbb{T}^2 \times \pr{\brac{0, \tfrac T {20}} \cup  \brac{\tfrac{3 T}{4}, T}}$.
\end{lemma}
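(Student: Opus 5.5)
We will build the block from the Miller/Mandache/KLP mechanism. The idea is to deform $\cos(k\te)e^{-kx}$ through a chain of harmonic functions for rotated metrics, and to make the rotations slow enough on a length-$T$ interval that the gradient of the coefficient matrix is $O(T^{-1})$.

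\emph{Key observation.} For a constant symmetric positive $2\times 2$ matrix $B$ acting on $(\te,\vp)$, take $A=\mathrm{diag}(B,1)$. Then $v=\cos(\ell\cdot(\te,\vp))e^{-\mu x}$ solves $\divv(A\nabla v)=0$ exactly when $\mu^2=B\ell\cdot\ell$. We therefore proceed in three stages, each on a subinterval of length $\simeq T/5$:
\begin{enumerate}[(i)]
\item Keep $\ell=(k,0)$ fixed, and slowly deform $B$ from $I$ so that the decay rate $\mu(x)=\sqrt{B(x)\ell\cdot\ell}$ moves from $k$ to the value needed later.
\item Pass from the mode $\ell=(k,0)$ to the mode $(k,2k)$ (or directly to $(0,2k)$), using an interpolation between two harmonic functions that share the same exponential rate.
\item Deform $B$ back toward $I$ with $\ell=(0,2k)$, so that the rate becomes $2k$.
\end{enumerate}

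\emph{Stages (i) and (iii).} Here $x$-dependence of $B$ is unavoidable, and exact solvability breaks. To handle this we use the ansatz $u=\cos(\ell\cdot(\te,\vp))\exp(-\int_0^x\mu)$ with $\mu(x)^2=B(x)\ell\cdot\ell+\mu'(x)$. This is a Riccati-type adjustment: since $\ell$ is fixed, only the scalar $b(x)=B(x)\ell\cdot\ell$ matters, and we define $b:=\mu^2-\mu'$ for any smooth, monotone profile $\mu$ we like. Because $|\mu'|\lesssim k/T$ while $\mu^2\simeq k^2$, taking $k\ge k_0(T)$ keeps $b/|\ell|^2$ within $[1/75,75]$. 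We then choose $B$ to realize $b$, for example $B=I+(b/|\ell|^2-1)\ell\ell^T/|\ell|^2$, and this has $|\nabla A|\lesssim T^{-1}$.

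\emph{Stage (ii): the switch.} This is the main obstacle. Miller's device is to write $u=\chi(x)v_1+(1-\chi(x))v_2$ with $v_1=\cos(k\te)e^{-\mu x}$ and $v_2=\cos(k\te+2k\vp)e^{-\mu x}$, and to choose the matrix $A$ (now depending on $\te,\vp,x$ and possibly with off-diagonal entries inside the $\T^2$ block, which the class $\mathcal{M}_0$ permits) so that $u$ is a solution. Equivalently, we solve for $A$ given $u$: we need a field $A\nabla u$ that is divergence free. We would follow \cite{KLP25}/\cite{Man98} by rotating and deforming the metric in the $(\te,\vp)$ plane. The plan is to take $B=R_s^TDR_s$, where $R_s$ is a shear taking $(k,0)$ to $(k,2k)$ and the parameter $s(x)$ varies slowly, and to exploit that $\cos(\ell\cdot(\te,\vp))$ is pulled back by the linear change of torus-coordinates $(\te,\vp)\mapsto(\te+2s\vp,\vp)$. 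An integer shear is not a diffeomorphism of $\T^2$ for non-integer $s$, which is why the gluing has to be done via the convex combination above. The error term $\chi'$ is $O(T^{-1})$ relative to $k$, and it is absorbed by modifying $A$ with a correction of size $O(1/(kT))$, which is small when $k\ge k_0(T)$. This is where the ellipticity constant $75$ is fixed.

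A final, cheaper stage from $(k,2k)$ to $(0,2k)$ is done in the same way. Concatenating the stages, with $A\equiv I$ on $[0,T/20]$ and on $[3T/4,T]$, gives the identities in \eqref{eqn:uPiecewise}. The decay rate is never below $k$, and the amplitude stays at most $2$, which gives $|u|\le 2e^{-kx}$.
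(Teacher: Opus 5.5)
\textbf{Gap: the switch in stage (ii) is never constructed.} Stage (ii) carries essentially the whole content of the lemma. You call it ``the main obstacle'' and then assert that the error ``is absorbed by modifying $A$ with a correction of size $O(1/(kT))$'', but you never produce that correction. The shear paragraph cannot supply it, since, as you note yourself, the shear is not a map of $\T^2$.

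Inside $\mathcal{M}_0(2,1,75)$ you may only perturb the $(\te,\vp)$-block. So for each $x$ you must solve $\divv_{\te,\vp}(\tilde B\nabla_{\te,\vp}u)=-E$, with $\tilde B$ symmetric, $|\tilde B|\lesssim (kT)^{-1}$ and $|\nabla\tilde B|\lesssim T^{-1}$.

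Where both amplitudes are bounded below, this is easy. For the modes $\cos(k\te)$ and $\cos(2k\vp)$, each mode absorbs its own error through an $x$-dependent change of its own diagonal entry; this is just your Riccati device.

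The trouble is at the ends of the switch. There the amplitude $\chi$ (or $1-\chi$) of one mode tends to $0$. The required change of that mode's coefficient has size $|\chi'|/(k\chi)$, which is unbounded, and the gradient of $u$ in that mode's direction degenerates. The error carried by the vanishing mode must then be written as the divergence of a field generated by the gradient of the \emph{other} mode, the one at full strength. Your proposal does not contain this idea.

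\textbf{How the paper does it.} It first uses the $\vp\vp$-entry to reach $A_0=\mathrm{diag}(1,\tfrac14,1)$. This costs nothing while $u=\cos(k\te)e^{-kx}$ is independent of $\vp$. Now $\cos(k\te)e^{-kx}$ and $\cos(2k\vp)e^{-kx}$ are both solutions with the same rate $k$, so no change of rate is needed before the switch. The paper then switches in two steps, $u=u_1+\eta_2u_2$ followed by $u=\eta_3u_1+u_2$, so one mode is always at full strength. In the first step the error to be cancelled is $(2k\eta_2'-\eta_2'')\cos(2k\vp)e^{-kx}$, and the identity
\[
\cos(2k\vp)=\divv_{\te,\vp}\Big(\tfrac1k\sin(k\te)\big(\cos(k\te)\cos(2k\vp),\ \sin(k\te)\sin(2k\vp)\big)\Big)
\]
writes it as the divergence of a field carrying the factor $\sin(k\te)$. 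Such a field has the form $\tilde B\nabla_{\te,\vp}u$ with the explicit entries $a_2-1,\ b_2=O((kT)^{-1})$. The second step is symmetric.

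\textbf{Secondary error in stages (i)/(iii).} Take your ansatz $u=\cos(\ell\cdot(\te,\vp))\exp(-\int_0^x\mu)$ with a monotone $\mu$ from $k$ to $2k$. This is forced on your direct route to $(0,2k)$, because your stage-(i) matrix $\mathrm{diag}(b/k^2,1)$ makes the shared rate equal to $2k$. The block then ends with $C\cos(2k\vp)e^{-2kx}$, where $C=\exp\int_0^T(2k-\mu)\ge e^{kT/4}$. This contradicts \eqref{eqn:uPiecewise}. You need $\int(\mu-2k)=0$ over the transition, which forces $\mu$ to overshoot $2k$. The paper obtains this from $\psi_4(x)=(1+\eta_4(x))x$, whose derivative overshoots $2$ substantially (the paper bounds it by $17$). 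This, not the switch, is where the constant $75>17^2/4$ comes from.
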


Assuming Proposition \ref{prop:eg_half}, we now prove Theorem \ref{thm:eg_1}.

\begin{proof}[Proof of Theorem \ref{thm:eg_1}]
    First, we define a bounded potential $V_0 = V_0(x)$ on $[-1, 1]$ for which the associated Schr\"odinger operator transforms the function $\cos(k_0 \te) e^{k_0 x}$ near $x = -1$ into the function $\cos(k_0 \te) e^{-k_0 x}$ near $x = 1$ over the interval $(-1, 1)$. 
    Let $f:[-1,1] \rightarrow [e^{-k_0}, 1]$ be a smooth, even function with
    \begin{align*}
        \abs{f''(x)/f(x)} \le C_{k_0},  \quad
        f(x) = e^{k_0 x} \, \text{ for } x \in [-1, -\tfrac 12], \quad 
        f(x) = e^{-k_0 x} \, \text{ for } x \in [\tfrac 1 2, 1].
    \end{align*}
    A computation shows that $u_0(\te, \vp, x) := \cos(k_0 \te) f(x)$ satisfies
    \begin{align}
    \label{eqn:localEg}
        \Delta u_0 
        = - k_0^2 \cos(k_0 \te) f(x)  + \cos(k_0 \te) f''(x) 
        = V_0 u_0,
    \end{align}
    where $\disp V_0(x) \coloneqq \frac{f''(x)}{f(x)} - k_0^2$ is bounded by some constant that depends only on $k_0$. 
    
    Next we extend the solution $u(\te, \vp, x)$ in \eqref{eqn:halfcyl_soln} to all of $\mathbb{T}^2 \times \R$ by setting 
    \begin{align*}
        u(\te, \vp, x) & \coloneqq \begin{cases}
            u(\te, \vp,- x) & x < -1 \\
            u_0(\te, \vp, x) & |x| \le 1 \\
            u(\te, \vp, x) & x > 1
        \end{cases}.
    \end{align*}
    By the condition on $u$ in \eqref{eqn:localBehav} and the definition of $u_0$, we see that $u$ is smooth.
    We similarly extend the coefficient matrix from $\mathbb{T}^2 \times (1, \iny)$ to $\mathbb{T}^2 \times \R$ by setting
    \begin{align*}
         A(\te, \vp, x) &\coloneqq \begin{cases}
            A(\te, \vp,- x) & x < -1 \\
            I & |x| \le 1 \\
            A(\te, \vp, x) & x > 1
        \end{cases}.
    \end{align*}
    By the condition on $A$ in \eqref{eqn:localBehav}, $A$ is also smooth.
    Let $V : \mathbb{T}^2 \times \R \to \R$ be defined by
    $$V(\te, \vp, x) := V_0(x) \chi_{[-1, 1]}(x).$$
    Since $u$ and $A$ are smooth, \eqref{eqn:halfcyl_soln} and \eqref{eqn:localEg} show that $u$ satisfies \eqref{eqn:cyl_soln}. 
    Moreover, we readily see that the estimates in \eqref{eqn:egest3} or \eqref{eqn:egest4} for $x \ge 1$ imply those in \eqref{eqn:egest1} or \eqref{eqn:egest2}, respectively, for all $x \in \R$. 
\end{proof}

Our next step is the proof of Proposition \ref{prop:eg_half}.
In this construction, we heavily rely on the ``building blocks" in Lemma \ref{lemma:transformation_block}.

\begin{proof}[Proof of Proposition \ref{prop:eg_half}]
    For our construction, we use the following sequences. 
    First, we have a non-decreasing sequence corresponding to the lengths of our intervals in the $x$-variable, $\set{T_n}_{n=1}^\iny$; these will be specified below.
    Then we have the sequence of endpoints $\set{x_n}_{n = 0}^\iny$, defined recursively as $x_0 = 1$,
	\begin{align*}
		x_{n} \coloneqq x_{n-1} + T_n = 1 + \sum_{j = 1}^n T_j.
	\end{align*}
	For each such $n \in \N$, define the intervals $I_n \coloneqq [x_{n-1}, x_{n}] = [x_{n-1}, x_{n-1} + T_n]$ and note that $\disp \bigcup_{n = 1}^\iny I_n = [1, \iny)$.

    Choose $k_0(T_1) \in \N$ so that Lemma \ref{lemma:transformation_block} is applicable with $k \in \N_{\ge k_0}$ whenever $T \ge T_1$.
    Set $k_n = 2^{n} k_0$.

    For each $n \in \N$, we define $u_{n}$ and $A_{n} \in \mathcal{M}_0(2, 1, 75)$ on $\T^2 \times I_{n}$ as follows: 
    If $n$ is odd, we let 
    $$u_{n}(\te, \vp, x) = \exp\pr{\sum_{j = 0}^{n-2} k_j x_j - k_{n-1} x_{n-1}} u(\te, \vp, x - x_{n-1}; k_{n-1})$$ 
    and $A_{n}(\te, \vp, x) = A(\te, \vp, x - x_{n-1}; k_{n-1})$, where $u(\cdot; k)$ and $A(\cdot; k)$ from Lemma \ref{lemma:transformation_block}. 
    In this case, we have that $\di \pr{A_n \gr u_n} = 0$ in $\T^2 \times (x_{n-1}, x_n)$ and
    \begin{align*}
		u_{n}(\te, \vp, x) 
   %      &= \begin{cases}
			% e^{\sum_{j = 0}^{n-2} k_j x_j - k_{n-1} x_{n-1}} \cos\pr{k_{n-1} \te} e^{-k_{n-1}(x - x_{n-1})} 
   %          & x \in [x_{n-1}, x_{n-1} + \frac 1 4 T_{n}] \\
			% e^{\sum_{j = 0}^{n-2} k_j x_j - k_{n-1} x_{n-1}} \cos\pr{2 k_{n-1} \vp} e^{-2 k_{n-1}(x - x_{n-1})} 
   %          & x \in [x_{n-1} + \frac{3}4 T_{n}, x_{n-1} + T_n]
%		\end{cases} \\
        &= \begin{cases}
			\cos(k_{n-1} \te) \exp\pr{\sum_{j = 0}^{n-2} k_j x_j} e^{-k_{n-1} x} & x \in [x_{n-1}, x_{n-1} + \frac 1 4 T_{n}] \\
			\cos(k_n\vp)\exp\pr{\sum_{j = 0}^{n-1} k_j x_j}  e^{-k_n x} & x \in [x_{n} - \frac{1}4 T_{n}, x_{n}]
		\end{cases}.
	\end{align*}
    If $n = 1$, we interpret $\disp \sum_{j = 0}^{n-2} k_j x_j$ as an empty sum and take it equal to $0$.
    If $n$ is even, we switch the roles of $\te$ and $\vp$ and set 
    $$u_{n}(\te, \vp, x) = \exp\pr{\sum_{j = 0}^{n-2} k_j x_j - k_{n-1} x_{n-1}} u(\vp, \te, x - x_{n-1}; k_{n-1})$$ 
    and $A_{n}(\te, \vp, x) = A(\vp, \te, x - x_{n-1}; k_{n-1})$ so that $\di \pr{A_n \gr u_n} = 0$ in $\T^2 \times (x_{n-1}, x_n)$ and
    \begin{align*}
		u_{n}(\te, \vp, x) 
        = \begin{cases}
			\cos(k_{n-1} \vp) \exp\pr{\sum_{j = 0}^{n-2} k_j x_j} e^{-k_{n-1} x} & x \in [x_{n-1}, x_{n-1} + \frac 1 4 T_{n}] \\
			\cos(k_n\te)\exp\pr{\sum_{j = 0}^{n-1} k_j x_j}  e^{-k_n x} & x \in [x_{n} - \frac{1}4 T_{n}, x_{n}]
		\end{cases}.
	\end{align*}
    Notice that for each $n \in \N$, the definition of $u_n$ on $[x_n - \frac 1 4 T_n, x_n]$ agrees with the definition of $u_{n+1}$ on $[x_n, x_n +\frac 1 4 T_{n+1}]$.
	
    For each $n \in \N$, \eqref{eqn:AuBlockBounds} in Lemma \ref{lemma:transformation_block} shows that there exists universal $C_1 > 0$ so that for each $(\te, \vp, x) \in \T^2 \times I_{n}$, 
    \begin{equation}
    \label{eqn:AnunBlockBounds}
        \begin{aligned}
        &\abs{\nabla A_n(\te, \vp, x)} \le C_1 T_{n}^{-1} \\
        &\abs{u_n(\te, \vp, x)} \le 2 \exp\pr{\sum_{j = 0}^{n-2} k_j x_j} e^{- k_{n-1} x} = 2 \exp\pr{k_0 \sum_{j = 0}^{n-2} 2^j x_j} e^{- k_{n-1} x}.
        \end{aligned}
    \end{equation}

    We define the coefficient matrix $A(\te, \vp, x)$ and the solution $u(\te, \vp, x)$ on $\mathbb{T}^2 \times [1, \iny)$ by setting  
	 \begin{equation}
    \label{eqn:AuHalfDefn}
    \begin{aligned}
		A(\te, \vp, x) & \coloneqq A_{n}(\te, \vp, x), \, \text{ if } \, x \in I_n, \\
        u(\te, \vp, x) & \coloneqq u_{n}(\te, \vp, x), \, \text{ if } \, x \in I_n.
	\end{aligned}    
    \end{equation}
    Since $u_1(\te, \vp, x) = \cos(k_0 \te)e^{- k_0 x}$ when $x \in \brac{1, 1 + \frac{T_1}{4}}$, while $A_1(\te, \vp, x) = I$ when $x \in \brac{1, 1 + \frac{T_1}{20}}$, then \eqref{eqn:localBehav} holds with $\de = \frac{T_1}{20} > 0$.
    By construction, $u$ is well-defined and smooth.
    Because $A \equiv I$ near each $x_n$, then $A$ is also well-defined and we see that \eqref{eqn:halfcyl_soln} holds.
	
	It remains to choose the interval lengths $\set{T_n}_{n =1}^\iny$ appropriately to obtain our desired bounds described by \eqref{eqn:egest3} and \eqref{eqn:egest4}. 
	
	For the first set of examples, we define $\set{x_n}_{n=1}^\iny$ using powers. 
    Given $\eps \in (0, 1]$, set $p(\eps) = \frac 1 \eps \ge 1$ and define the endpoints as $x_n \coloneqq \pr{n+1}^p$. 
    If $x \in I_n$ for some $n \ge n_0 := \lceil p - 2 \rceil$, then we see that
    \begin{align*}
      T_n 
      &= x_n - x_{n-1} 
      = \pr{n+1}^p - n^p = \pr{n + 1}^p\brac{1 - \pr{1 - \frac {1} {n+1}}^p} \\
      &= \pr{n + 1}^{p-1} \brac{ \sum_{k=1}^\iny (-1)^{k-1} \frac{p(p-1) \ldots (p - k +1)}{k!}\pr{\frac 1 {n+1}}^{k-1}} \\
      &\ge \frac p 2 \pr{n + 1}^{p-1}
      = \frac p 2 x_n^{1 - \eps}
      \ge \frac 1 {2\eps} x^{1 - \eps}.
    \end{align*}
    % \begin{align*}
    %     &\sum_{k=1}^\iny (-1)^{k-1} \frac{p(p-1) \ldots (p - k +1)}{k!}\pr{\frac 1 {n+1}}^{k-1} \\
    %     &= p\brac{1 - \frac{p-1}{2}\pr{\frac 1 {n+1}}
    %     + \sum_{k=3}^\iny (-1)^{k-1} \frac{(p-1) \ldots (p - k +1)}{k!}\pr{\frac 1 {n+1}}^{k-1}} \\
    %     &\approx p\brac{1 - \frac{p-1}{2}\pr{\frac 1 {n+1}}
    %     + \sum_{k=3}^\iny (-1)^{k-1} \frac{1}{k!}\pr{\frac {p-1} {n+1}}^{k-1}}
    % \end{align*}
    Combining \eqref{eqn:AuHalfDefn} with the first part of \eqref{eqn:AnunBlockBounds} shows that for all $(\te, \vp, x) \in \mathbb{T}^2 \times [x_{n_0 - 1}, \iny)$,
    \begin{align*}
        \abs{\gr A(\te, \vp, x)} 
        &\le \frac{C_1}{T_n}
        \le \frac{2 \eps C_1}{x^{1 - \eps}}.
    \end{align*}
    By adjusting the constants, it can be shown that the above bound also holds for all $(\te, \vp, x) \in \mathbb{T}^2 \times [1, x_{n_0 - 1})$.
    In particular, the first part of \eqref{eqn:egest3} holds for any choice of $\eps \in (0, 1]$.
    
    To address the second part of \eqref{eqn:egest3}, first observe that
    \begin{align*}
        \sum_{j=0}^{n-2} 2^j x_j
        &= \sum_{j=0}^{n-2} 2^j \pr{j+1}^p
        = n^p \sum_{j=0}^{n-2} 2^j 
        - \frac 1 2 \sum_{j=1}^{n-1} 2^j \pr{n^p - j^p} \\
        &\le 2^{n-1} x_{n-1} - n^p
        - n^{p-1}\frac 1 2 \sum_{j=1}^{n-1} 2^j \pr{n - j} \\
        &= 2^{n-1} x_{n-1} - n^p - n^{p-1} \pr{2^n - n - 1}
        \le 2^{n-1} x_{n-1} - 2^{n-1},
    \end{align*}
    where we have used that $n^p - j^p \ge n^{p-1} \pr{n - j}$ in the second line.
    It then follows from the second part of \eqref{eqn:AnunBlockBounds} that if $x \in I_n$,
    \begin{align*}
        \abs{u_n(\te, \vp, x)} 
        &\le 2 \exp\pr{k_0 \sum_{j = 0}^{n-2} 2^j x_j} e^{- k_{n-1} x} \\
        &\le 2 \exp\set{k_0 \pr{2^{n-1} x - 2^{n-1}} - k_0 2^{n-1} x} \\
        &= 2 \exp\set{- \exp\brac{ \log 2 \, x_n^{\eps} + \log \pr{\frac{k_0}{4}}}}
        \le 2 \exp\set{- \exp\pr{\log 2 \, x^{\eps}  }},
    \end{align*}
    where we have assumed that $k_0 \ge 4$.
    In particular, the conditions described by the second part of \eqref{eqn:egest3} hold for any $\eps \in (0, 1]$.
    
    \vspace{0.5em}
    
	For the second set of examples, we define $\set{x_n}_{n=1}^\iny$ geometrically. 
    Given $ \eps \in (0, \tfrac 1 2]$, choose $\ga(\eps) \in (0, 1]$ so that $\frac{\log \pr{1 + \ga}}{\log 2} = \frac {\eps}{1 - \eps}$.
    Define the endpoints as $x_n := \pr{1 + \ga}^n$ so that $T_n = x_n - x_{n-1} = \ga \pr{1 + \ga}^{n-1}$ for each $n \in \N$.
    In particular, if $x \in I_n$, we have $\disp T_n = \frac{\ga}{1 + \ga} x_n \ge \frac{\ga}{1 + \ga} x$ and $\disp n \ge \frac{\log x}{\log\pr{1 + \ga}}$.
    Combining \eqref{eqn:AuHalfDefn} with the first part of \eqref{eqn:AnunBlockBounds} shows that for all $(\te, \vp, x) \in \mathbb{T}^2 \times [1, \iny)$,
    \begin{align*}
        \abs{\gr A(\te, \vp, x)} 
        &\le \frac{C_1\pr{1 + \ga}}{\ga x}
        \le \frac{2C_1}{\ga x}.
    \end{align*}
    Since $\ga \ge \log\pr{1 + \ga} \ge \eps \log 2$, then we see that for all $x \ge 1$,
    \begin{align*}
		\abs{\nabla A(\te, \vp, x)} \le \frac{C}{\eps x}, 
	\end{align*}
    showing that the first part of \eqref{eqn:egest4} holds for any $\eps \in (0, \tfrac 1 2]$.
    
    Since
    \begin{align*}
        \sum_{j=0}^{n-2} 2^j x_j
        &= \sum_{j=0}^{n-2} 2^j \pr{1 + \ga}^j
        = \frac{\brac{2\pr{1 + \ga}}^{n-1} - 1}{2\pr{1 + \ga} -1}
        < \frac{\brac{2\pr{1 + \ga}}^{n-1}}{1 + 2 \ga}  
        = \frac{2^{n-1} x_{n-1}}{1 + 2 \ga} 
    \end{align*}
    then for $x \in I_n$, the second part of \eqref{eqn:AnunBlockBounds} shows that
    \begin{align*}
        \abs{u(\te, \vp, x)} 
        &\le 2 \exp\pr{k_0 \sum_{j = 0}^{n-2} 2^j x_j} e^{- k_{n-1} x} 
        \le 2 \exp\pr{\frac{2^{n-1} k_0 x}{1 + 2 \ga} - 2^{n-1} k_0 x} \\
        &= 2 \exp\pr{-\frac{\ga k_0 }{1 + 2 \ga} x 2^{n}}
        \le 2 \exp\pr{-\frac{\ga k_0 }{1 + 2 \ga} x^{1 + \frac{\log 2}{\log\pr{1 + \ga}} }} \\
        &= 2 \exp\pr{-\frac{\ga k_0 }{1 + 2 \ga} x^{\frac 1 \eps}}.
    \end{align*}
    It follows that for any $x \ge 1$,
    \begin{align*}
        \abs{u(\te, \vp, x)} 
        \le 2 \exp\pr{-C_\eps x^{\frac 1 \eps}},
    \end{align*}
    showing that the second part of \eqref{eqn:egest4} also holds for any $\eps \in (0, \tfrac 1 2]$.
\end{proof}

It is worth emphasizing again that the second set of examples provided by Theorem \ref{thm:eg_1} (i.e., those with the bounds described by \eqref{eqn:egest2}) show that in Theorem \ref{thm:tau=1}, the relationship between the constant $L_2$ and the power $p = 1 + \tilde{c}_1 L_2$ is in some sense optimal. 
Indeed, these examples show that there must be a linear dependence like $p(L_2) \ge c L_2$ as $L_2 \ra \infty$ in the conclusion of the theorem. 
On the other hand, for the small $L_2$ regime, it is unlikely that any more can be said. 
In particular, even for $A \equiv I$ in $\T^2 \times \R$, the solution $v(\te, \vp, x) = \cos(\te) e^{-x}$ is an entire solution with exponential decay (growth) in the $x$($-x$)-direction, so we cannot expect solutions to be better behaved than those for constant-coefficient operators.

\vspace{1em}

We complete this section by filling in the details of the ``building block'' transformation. 
The proof we provide below expedites and simplifies some of the arguments of \cite{KLP25}.

\begin{proof}[Proof of Lemma \ref{lemma:transformation_block}]
    Fix $T > 0$ and set $J = [0, T]$.
	For $i = 1, \ldots, 5$, define the subintervals $J_i = \brac{ \tfrac{(i-1)}5 T, \tfrac{i}5 T}$ so that $\disp J = \bigcup_{i = 1}^5 J_i$.

	Define $\eta : \R \to [0, 1]$ to be a smooth cutoff function with the property that $\eta(x) = 0$ for $x \le \tfrac 1 4$, $\eta(x) = 1$ for $x \ge \tfrac 3 4$, and $\eta^{\prime}$, $\eta^{\prime\prime}$, and $\eta^{\prime\prime\prime}$ are supported in $[\tfrac 1 4, \tfrac 3 4]$ with $\abs{\eta^{\prime}} \le 4$, $\abs{\eta^{\prime\prime}} \lesssim 1$, and $\abs{\eta^{\prime\prime\prime}} \lesssim 1$ there.
    Let 
    \begin{align*}
        &\eta_1(x) := \eta\pr{\tfrac {5} T x}, \qquad\qquad\quad
        \eta_2(x) := \eta\pr{\tfrac {5x} T -1}, \\
        &\eta_3(x) := 1 - \eta\pr{\tfrac {5x} T -2}, \quad
        \eta_4(x) := \eta\pr{\tfrac {5x} T - 3}.
    \end{align*}
    Note that for each $i$, $\eta^{\prime}_i$, $\eta^{\prime\prime}_i$, and $\eta^{\prime\prime\prime}_i$ are supported on $J_i$ with $\abs{\eta^{\prime}_i} \lesssim T^{-1}$, $\abs{\eta^{\prime\prime}_i} \lesssim T^{-2}$, and $\abs{\eta^{\prime\prime\prime}_i} \lesssim T^{-3}$.

	Fix $k \in \N$ with $k T \gg 1$, then let  
    \begin{equation}
    \label{eqn:uiDefns}
        \begin{aligned}
            & u_1(\te, \vp, x) = \cos(k \te) e^{- k x} \\
		& u_2(\te, \vp, x) = \cos(2k \vp) e^{- k x} \\
		& u_3(\te, \vp, x) = \cos(2k \vp) e^{- 2k x}.
        \end{aligned}
    \end{equation}
	Observe that $u_1$ and $u_3$ satisfy $\LP u = 0$ while $u_1$ and $u_2$ both satisfy $\di \pr{A_0 \gr u} = 0$, where we introduce $A_0 = \begin{bmatrix} 1 & 0 & 0 \\ 0 & \frac 1 4 & 0 \\ 0 & 0 & 1 \end{bmatrix}$.
	
	Let $u : \T^2 \times J \to \R$ be given by
	\begin{equation}
		\label{uDefn}
		u(\te, \vp, x) = \eta_3(x) \cos(k \te) e^{- k x} + \eta_2(x) \cos(2k \vp) e^{- \brac{1 + \eta_4(x)}k x}    
	\end{equation}
	then note that 
	\begin{align*}
		u(\te, \vp, x)
		& = \begin{cases}
			\cos(k \te) e^{- k x} &\quad \text{for } x \in J_1 \\    
			\cos(k \te) e^{- k x} + \eta_2(x) \cos(2k \vp) e^{- k x} &\quad \text{for } x \in J_2 \\
			\eta_3(x) \cos(k \te) e^{- k x} + \cos(2k \vp) e^{- k x}  &\quad \text{for } x \in J_3 \\
			\cos(2k \vp) e^{- \brac{1 + \eta_4(x)}k x} &\quad \text{for } x \in J_4 \\
            \cos(2k \vp) e^{- 2k x} &\quad \text{for } x \in J_5
		\end{cases}.
	\end{align*}
	In fact, with the notation from \eqref{eqn:uiDefns}, we have
	\begin{align*}
		u(\te, \vp, x)
		& = \begin{cases}
			u_1(\te, \vp, x) &\quad \text{for } x \in \brac{0,  \tfrac{1}{4}T} \\    
			u_1(\te, \vp, x) + u_2(\te, \vp, x) &\quad \text{for } x \in \brac{\tfrac{7}{20}T, \tfrac{9}{20}T} \\
			u_2(\te, \vp, x) &\quad \text{for } x \in \brac{ \tfrac{11}{20}T, \tfrac{13}{20}T} \\
			u_3(\te, \vp, x) &\quad \text{for } x \in \brac{\tfrac{3}{4}T, T}
		\end{cases},
	\end{align*}
    which establishes \eqref{eqn:uPiecewise}.
	Since $\abs{u(\te, \vp, x)} \lesssim e^{-kx}$ for each $(\te, \vp, x) \in \T^2 \times J$, then the second part of \eqref{eqn:AuBlockBounds} holds.
	
    It remains to show that there exists a matrix-valued function $A \in \mathcal{M}_0(2, 1, 75)$ with $\abs{\gr A} \lesssim T^{-1}$ and $A \equiv I$ in $\disp \mathbb{T}^2 \times \pr{\brac{0, \tfrac T {20}} \cup  \brac{\tfrac{3 T}{4}, T}}$ that satisfies $\di\pr{A \gr u} = 0$ in $\T^2 \times J$.
	We construct $A$ in five steps corresponding to each subinterval $J_i$. \\
	
	\nid \textbf{Step 1:}
	Define $d_1 : J_1 \to \R$ by 
	$$d_1(x) = 1 + \eta_1(x) \pr{\frac 1 4 - 1}$$
	then set
	$$A_1(x) = \begin{bmatrix} 1 & 0 & 0 \\ 0 & d_1(x) & 0 \\ 0 & 0 & 1 \end{bmatrix}.$$
	Observe that $A_1 = I$ on on $\T^2 \times \brac{0, \tfrac{1}{20} T}$, $A_1 =  A_0$ on $\T^2 \times \brac{\frac{3}{20}T, \frac 1 5 T}$, $A_1 \in \mathcal{M}_0(2, 1, 4)$, and $\abs{\gr {A}_1}\lesssim T^{-1}$.
	Moreover, since $u(\te, \vp, x) = \cos(k \te) e^{- k x}$ on $\T^2 \times J_1$, then $u$ is a solution of $\di \pr{A_1 \gr u} = 0$ in $\T \times J_1$. \\
	
	\nid \textbf{Step 2:}
	Define $a_2, b_2 : \T^2 \times J_2 \to \R$ by
	\begin{align*}
		a_2(\te, \vp, x)
		&= 1 + \frac {2k \eta^{\prime}_2(x) - \eta^{\prime\prime}_2(x)} {k^2} \brac{2 \eta_2(x) \sin^2(2k \vp)  - \cos(k \te) \cos(2k\vp)} \\
		b_2(\te, \vp, x)
		&= - \frac {2k \eta^{\prime}_2(x) - \eta^{\prime\prime}_2(x)}{ k^2} \sin(k\te) \sin (2k\vp)
	\end{align*}
	and let
	$$A_2(\te, \vp, x) = \begin{bmatrix} a_2(\te, \vp, x) & b_2(\te, \vp, x) & 0 \\ b_2(\te, \vp, x) & \frac 1 4 & 0 \\ 0 & 0 & 1\end{bmatrix}.$$
	Notice that $A_2 = A_0$ on $\T^2 \times \pr{\brac{\frac 1 5 T, \frac{1}{4}T} \cup \brac{\frac{7}{20}T, \frac 2 5 T}}$.
    Since $\disp \abs{a_2 - 1} \lesssim \frac 1 {k T}$ and $\disp \abs{b_2} \lesssim \frac 1 {k T}$, then $A_2$ is bounded and elliptic with $\La = 5$ if we choose $k T \gg 1$.
    That is, $A_2 \in \mathcal{M}_0(2, 1, 5)$.
    Because
	\begin{align*}
		\del_\te a_2
		&= \pr{2 \eta^{\prime}_2(x)- \frac {\eta^{\prime\prime}_2(x)} {k}} \sin(k \te) \cos(2k\vp) \\
        \del_\vp a_2 
        &= 2\pr{2 \eta^{\prime}_2(x)- \frac {\eta^{\prime\prime}_2(x)} {k}} \brac{4 \eta_2(x) \cos(2k \vp)  + \cos(k \te)} \sin(2k\vp) \\
        \del_x a_2 
		&= \pr{\frac {2\eta^{\prime\prime}_2(x)}{k} - \frac{\eta^{\prime\prime\prime}_2(x)} {k^2}} \brac{2 \eta_2(x) \sin^2(2k \vp)  - \cos(k \te) \cos(2k\vp)} \\
        &\quad + 2\pr{\frac {2 \brac{\eta^{\prime}_2(x)}^2}{k} - \frac{\eta^{\prime}_2(x)\eta^{\prime\prime}_2(x)} {k^2}} \sin^2(2k \vp),
	\end{align*}
	then $\abs{\gr a_2} \lesssim T^{-1}\brac{1 + \pr{k T}^{-1} + \pr{k T}^{-2}} \lesssim T^{-1}$ in $\T^2 \times J_2$.
	As
	\begin{align*}		
        \del_\te b_2 
		&= \pr{-2 \eta^{\prime}_2(x) + \frac {\eta^{\prime\prime}_2(x)} {k}} \cos(k\te) \sin (2k\vp) \\
        \del_\vp b_2 
		&= 2\pr{-2 \eta^{\prime}_2(x) + \frac {\eta^{\prime\prime}_2(x)} {k}} \sin(k\te) \cos (2k\vp) \\
		\del_x b_2
		&= \pr{- \frac {2\eta^{\prime\prime}_2(x)}{ k} + \frac{\eta^{\prime\prime\prime}_2(x)}{ k^2}} \sin(k\te) \sin (2k\vp),
	\end{align*}
	then $\abs{\gr b_2} \lesssim T^{-1}$ in $\T^2 \times J_2$ as well, and we may conclude that $\abs{\gr A_2} \lesssim T^{-1}$ in $\T^2 \times J_2$. \\
	
	Since
	$$u(\te, \vp, x) = u_1(\te, \vp, x) + \eta_2(x) u_2(\te, \vp, x) = e^{- k x} \brac{\cos(k\te) + \eta_2(x) \cos(2k\vp)}$$
	in $\T^2 \times J_2$, then
	\begin{align*}
		\gr u &= - k e^{- k x} \pr{\sin(k\te), 2 \eta_2(x) \sin(2k \vp), \cos(k\te) + \frac{k\eta_2(x) - \eta^{\prime}_2(x)}{k}\cos(2k\vp)  }
	\end{align*}
	so that
	\begin{align*}
		e^{k x}  A_2 \gr u
		&= - k  \begin{bmatrix} a_2 & b_2 & 0 \\ b_2 & \frac 1 4 & 0 \\ 0 & 0 & 1 \end{bmatrix}
		\begin{bmatrix} \sin(k\te) \\ 2 \eta_2(x) \sin(2k \vp) \\ \cos(k\te) + \frac{k\eta_2(x) - \eta^{\prime}_2(x)}{k}\cos(2k\vp)  \end{bmatrix} \\
		&= - k \begin{bmatrix}
			a_2 \sin(k\te) + 2 b_2 \eta_2(x) \sin(2k \vp) \\  
			b_2 \sin(k\te) + \frac 1 2 \eta_2(x) \sin(2k \vp) \\
            \cos(k\te) + \frac{k\eta_2(x) - \eta^{\prime}_2(x)}{k}\cos(2k\vp) 
		\end{bmatrix} \\
		&= - k \begin{bmatrix}
			\brac{1 - \frac {2k \eta^{\prime}_2(x) - \eta^{\prime\prime}_2(x)} {k^2} \cos(k \te) \cos(2k\vp)}\sin(k\te) \\  
			\brac{\frac 1 2 \eta_2(x) - \frac {2k \eta^{\prime}_2(x) - \eta^{\prime\prime}_2(x)}{ k^2} \sin^2(k\te)} \sin (2k\vp) \\
            \cos(k\te) + \frac{k\eta_2(x) - \eta^{\prime}_2(x)}{k}\cos(2k\vp) 
		\end{bmatrix} .
	\end{align*}
	It follows that
	\begin{align*}
		e^{k x}  \di \pr{A_2 \gr u}
		&= - k 
		\del_\te \set{\brac{1 - \frac {2k \eta^{\prime}_2(x) - \eta^{\prime\prime}_2(x)} {k^2} \cos(k \te) \cos(2k\vp)}\sin(k\te)} \\
		&\quad - k \del_\vp \set{\brac{\frac 1 2 \eta_2(x) - \frac {2k \eta^{\prime}_2(x) - \eta^{\prime\prime}_2(x)}{ k^2} \sin^2(k\te)} \sin (2k\vp)} \\
        &\quad - k e^{k x} \del_x \set{e^{- k x} \brac{\cos(k\te) + \frac{k\eta_2(x) - \eta^{\prime}_2(x)}{k}\cos(2k\vp) }} \\
		&= - k^2 
		\brac{\cos(k\te) - \frac {2k \eta^{\prime}_2(x) - \eta^{\prime\prime}_2(x)} {k^2} \cos^2(k \te) \cos(2k\vp)} \\
		&\quad- k^2
		\frac {2k \eta^{\prime}_2(x) - \eta^{\prime\prime}_2(x)} {k^2} \sin^2(k \te) \cos(2k\vp) \\
		&\quad- 2 k^2 \brac{\frac 1 2 \eta_2(x) - \frac {2k \eta^{\prime}_2(x) - \eta^{\prime\prime}_2(x)}{ k^2} \sin^2(k\te)} \cos (2k\vp) \\
        &\quad+ k^2 \brac{\cos(k\te) + \frac{k\eta_2(x) - \eta^{\prime}_2(x)}{k}\cos(2k\vp) } \\
        &\quad - k \brac{ \frac{k\eta^{\prime}_2(x) - \eta^{\prime\prime}_2(x)}{k}}\cos(2k\vp) .
	\end{align*}
    After simplifying, we see that $\di \pr{A_2 \gr u} = 0$ in $\T^2 \times J_2$. \\

	\nid \textbf{Step 3:}
	Define $b_3, d_3 : \T^2 \times J_3 \to \R$ by
	\begin{align*}
		b_3(\te, \vp, x)
		&= - \frac {2k \eta^{\prime}_3(x) - \eta^{\prime\prime}_3(x)}{ k^2} \sin(k\te) \sin (2k\vp) \\
		d_3(\te, \vp, x)
		&= \frac 1 4
		- \frac {2k \eta^{\prime}_3(x) - \eta^{\prime\prime}_3(x)} {4k^2} \brac{\cos(k\te) \cos(2k\vp)
			- 2 \eta_3(x) \sin^2(k \te)}
	\end{align*}
	then set
	$$A_3(\te, \vp, x) = \begin{bmatrix} 1 & b_3(\te, \vp, x) & 0 \\ b_3(\te, \vp, x) & d_3(\te, \vp, x) & 0 \\ 0 & 0 & 1 \end{bmatrix}.$$
	We see that $A_3 = A_0$ on $\T^2 \times \pr{\brac{\frac 2 5 T, \frac{9}{20}T} \cup \brac{\frac{11}{20}T, \frac{3}{5}T}}$.
    Since $\disp \abs{b_3} \lesssim \frac 1 {k T}$ and $\disp \abs{d_3 - \tfrac 1 4} \lesssim \frac 1 {k T}$, then $A_3$ is bounded and elliptic with $\La = 5$ if we choose $k T \gg 1$.
    As
	\begin{align*}
		\del_\te b_3 
        &= \pr{- 2\eta^{\prime}_3(x) + \frac{\eta^{\prime\prime}_3(x)}{ k}} \cos(k\te) \sin (2k\vp) \\
        \del_\vp b_3 
        &= 2 \pr{- 2\eta^{\prime}_3(x) + \frac{\eta^{\prime\prime}_3(x)}{ k}} \sin(k\te) \cos (2k\vp)\\
		\del_x b_3
		&= \pr{- \frac {2 \eta^{\prime\prime}_3(x) }{ k} + \frac {\eta^{\prime\prime\prime}_3(x)}{ k^2}} \sin(k\te) \sin (2k\vp),
	\end{align*}
	then $\abs{\gr b_3} \lesssim T^{-1}$ in $\T^2 \times J_3$.
	Because
	\begin{align*}
        \del_\te d_3 
		&= \frac 1 4 \pr{2 \eta^{\prime}_3(x) - \frac {\eta^{\prime\prime}_3(x)} {k}} \brac{\sin(k\te) \cos(2k\vp) + 4\eta_3(x) \sin(k \te) \cos(k\te)} \\
        \del_\vp d_3 
		&= \frac 1 2 \pr{2 \eta^{\prime}_3(x) - \frac {\eta^{\prime\prime}_3(x)} {k}} \cos(k\te) \sin(2k\vp) \\
		\del_x d_3
		&= \pr{- \frac {\eta^{\prime\prime}_3(x) } {2k} + \frac { \eta^{\prime\prime\prime}_3(x)} {4k^2}}\brac{\cos(k\te) \cos(2k\vp)
			- 2 \eta_3(x) \sin^2(k \te)} \\
		&\quad+ \pr{\frac {\brac{\eta^{\prime}_3(x)}^2} {k} - \frac {\eta^{\prime}_3(x)\eta^{\prime\prime}_3(x)} {2k^2}}
		\sin^2(k \te)
	\end{align*}
	then $\abs{\gr d_3} \lesssim T^{-1}$ in $\T^2 \times J_3$ as well.
    Therefore, $\abs{\gr A_3} \lesssim T^{-1}$  in $\T^2 \times J_3$. \\
	
	Since
	$$u(\te, \vp, x) = \eta_3(x) u_1(\te, \vp, x) + u_2(\te, \vp, x) = e^{- k x} \brac{\eta_3(x) \cos(k\te) + \cos(2k\vp)}$$
    in $\T^2 \times J_3$, then
	\begin{align*}
		\gr u &= - k e^{- k x} \pr{\eta_3(x) \sin(k\te), 2 \sin(2k \vp), \frac{k\eta_3(x) - \eta^{\prime}_3(x)}{k} \cos(k\te) + \cos(2k\vp)  }
	\end{align*}
	so that
	\begin{align*}
		e^{k x} A_3 \gr u
		&= - k \begin{bmatrix} 1 & b_3 & 0 \\ b_3 & d_3 & 0 \\ 0 & 0 & 1 \end{bmatrix}
		\begin{bmatrix} \eta_3(x) \sin(k\te) \\ 2 \sin(2k \vp) \\ \frac{k\eta_3(x) - \eta^{\prime}_3(x)}{k} \cos(k\te) + \cos(2k\vp) \end{bmatrix} \\
		&= - k \begin{bmatrix}
			\eta_3(x) \sin(k\te) + 2 b_3 \sin(2k \vp) \\  
			b_3 \eta_3(x) \sin(k\te) + 2 d_3 \sin(2k \vp) \\
            \frac{k\eta_3(x) - \eta^{\prime}_3(x)}{k} \cos(k\te) + \cos(2k\vp)
		\end{bmatrix} \\
		&= - k \begin{bmatrix}
			\brac{\eta_3(x) - 2\frac {2k \eta^{\prime}_3(x) - \eta^{\prime\prime}_3(x)}{ k^2} \sin^2(2k\vp)} \sin(k\te) \\  
			\frac 1 2 \brac{1
			- \frac {2k \eta^{\prime}_3(x) - \eta^{\prime\prime}_3(x)} {k^2} \cos(k\te) \cos(2k\vp) } \sin(2k \vp) \\
            \frac{k\eta_3(x) - \eta^{\prime}_3(x)}{k} \cos(k\te) + \cos(2k\vp)
		\end{bmatrix}. 
	\end{align*}
	In $\T^2 \times J_3$, we get that
	\begin{align*}
		e^{k x} \di\pr{A_3 \gr u}
		&= - k \del_\te \set{\brac{\eta_3(x) - 2\frac {2k \eta^{\prime}_3(x) - \eta^{\prime\prime}_3(x)}{ k^2} \sin^2(2k\vp)} \sin(k\te)} \\
		&\quad- \frac 1 2 k \del_\vp \set{\sin(2k \vp)
			- \frac {2k \eta^{\prime}_3(x) - \eta^{\prime\prime}_3(x)} {k^2} \cos(k\te) \cos(2k\vp) \sin(2k \vp)} \\
        &\quad- k e^{k x} \del_x \set{e^{- k x}\brac{\frac{k\eta_3(x) - \eta^{\prime}_3(x)}{k} \cos(k\te) + \cos(2k\vp)}} \\
		&= - k^2 \brac{\eta_3(x) \cos(k\te)  + \cos(2k \vp)}
		+ \brac{2k \eta^{\prime}_3(x) - \eta^{\prime\prime}_3(x)} \cos(k\te) \\
        &\quad+ k^2\brac{\frac{k\eta_3(x) - \eta^{\prime}_3(x)}{k} \cos(k\te) + \cos(2k\vp)} 
        - k \brac{\frac{k\eta^{\prime}_3(x) - \eta^{\prime\prime}_3(x)}{k}  }\cos(k\te) \\
		&= 0.
	\end{align*}
  	
 	\nid \textbf{Step 4:}
    Let $\psi_4(x) = \brac{1 + \eta_4(x)} x$ so that 
    \begin{align*}
        \psi_4^{\prime}(x) &= 1 +  \eta_4(x) + x \eta^{\prime}_4(x) \\
        \psi_4^{\prime\prime}(x) &= 2\eta^{\prime}_4(x) + x \eta^{\prime\prime}_4(x) \\
        \psi_4^{\prime\prime\prime}(x) &= 3\eta^{\prime\prime}_4(x) + x \eta^{\prime\prime\prime}_4(x).
    \end{align*}
	Define $d_4 : J_4 \to \R$ by
	$$d_4(x) = \frac 1 4 \set{\brac{1 +  \eta_4(x)
			+ x \eta^{\prime}_4(x)}^2- \frac{2 \eta^{\prime}_4(x) + x \eta^{\prime\prime}_4(x)} {k}}
            = \frac 1 4 \set{\brac{\psi_4^{\prime}(x)}^2 - \frac{\psi_4^{\prime\prime}(x)}{k}}$$
	then set
	$$A_4(x) = \begin{bmatrix} 1 & 0 & 0 \\ 0 & d_4(x) & 0 \\ 0 & 0 & 1 \end{bmatrix}.$$ 
	We see that $A_4 = A_0$ on $\T^2 \times \brac{\frac 3 5 T, \frac{13}{20}T}$ and $A_4 = I$ on $\T^2 \times \brac{\frac{3}{4}T, \frac 4 5 T}$.
    For $x \in \brac{\frac{13}{20}T, \frac{15}{20}T}$, we have
    \begin{align*}
        1 \le 1 + \eta_4(x) + x \eta^{\prime}_4(x)
        &\le 1 + 1 + \abs{x} \frac 5 T \abs{\eta'}
        \le 17.
    \end{align*}
    Then, for $kT \gg 1$, we get $\frac 1 {5} \le \abs{d_4(x)} \le 75$ so that $A_4$ is bounded and elliptic with $\La = 75$.
	As
	\begin{align*}
		d_4^{\prime}(x)
		&= \frac 1 4 \set{2\psi_4^{\prime}(x)\psi_4^{\prime\prime}(x) - \frac{\psi_4^{\prime\prime\prime}(x)}{k}},
	\end{align*}
	then $\abs{\gr A_4} \lesssim T^{-1}$ in $\T^2 \times J_4$.
	
	Since $u(\te, \vp, x) = \cos(2k \vp) e^{- \brac{1 + \eta_4(x)}k x} = \cos(2k \vp) e^{- k \psi_4(x)}$ on $\disp \T^2 \times J_4$, then
    \begin{align*}
        \gr u &= - k e^{- k \psi_4(x)} \pr{0, 2\sin(2k \vp), \psi_4^{\prime}(x) \cos(2k \vp) }
    \end{align*}
    so that in $\T^2 \times J_4$,
	\begin{align*}
		\di\pr{A_4 \gr u}
		&= \del_\vp\set{ - 2k d_4(x) \sin(2k \vp) e^{-k \psi_4(x)}} 
        + \del_x \set{- k e^{- k \psi_4(x)}\psi_4^{\prime}(x) \cos(2k \vp)} \\
		&= - k^2 \set{\brac{\psi_4^{\prime}(x)}^2 - \frac{\psi_4^{\prime\prime}(x)}{k}} u
        + k^2 \brac{\psi_4^{\prime}(x)}^2 u
		- k\psi_4^{\prime\prime}(x) u
        = 0.
	\end{align*}
	
	\nid \textbf{Step 5:} On $\T^2 \times J_5$, set $A_5 = I$ so that $\LP u = 0$ there. \\
	
	In conclusion, if we define $A : \T^2 \times J \to \mathbb{M}_3$ by $A(\te, \vp, x) = A_i(\te, \vp, x)$ whenever $x \in J_i$, then we see that $A \in \mathcal{M}(2, 1, 75)$ with $\abs{\gr A} \lesssim T^{-1}$ in $\T^2 \times J$.
	Moreover, with $u$ as defined in \eqref{uDefn}, we have that $\di \pr{A \gr u} = 0$ in $\T^2 \times J$.	
\end{proof}

\bibliographystyle{alpha}
\bibliography{refs}

\end{document}